\newcommand {\R}{\mathbb{R}}
\newcommand {\Z}{\mathbb{Z}}
\newcommand {\N}{\mathbb{N}}
\newcommand {\C}{\mathbb{C}}
\newcommand{\la}{\lambda}
\newcommand{\re}{\operatorname{Re}}
\newcommand{\im}{\operatorname{Im}}
\newcommand{\sumast}{\mathop{\sum\nolimits^{\mathrlap{\ast}}}}
\newcommand{\Mod}[1]{\ (\mathrm{mod}\ #1)}
\newtheorem{thm}{Theorem}[section]
\newtheorem{lemma}[thm]{Lemma}
\newtheorem{cor}{Corollary}[thm]
\newtheorem*{remark}{Remark}
\newtheoremstyle{named}{}{}{\itshape}{}{\bfseries}{.}{.5em}{\thmnote{#3}}
\theoremstyle{named}
\numberwithin{equation}{section}
\title{Hybrid Subconvexity Bound for $L\left(\frac{1}{2},\mathrm{Sym}^2 f\otimes\rho\right)$ via the Delta Method}
\author{Wing Hong Leung}
\date{}
\begin{document}

\setcounter{page}{1}

\maketitle

\section*{Abstract}

Let $P$ be a prime and $k$ be an even integer. Let $f$ be a full level holomorphic cusp form of weight $k$ and $\rho$ be a primitive level $P$ holomorphic cusp form with arbitrary nebentypus and fixed weight $\kappa$. We prove a hybrid subconvexity bound for $L\left(\frac{1}{2},\mathrm{Sym}^2 f\otimes \rho\right)$ when $P^{\frac{1}{4}+\eta}<k<P^{\frac{21}{17}-\eta}$ for any $0<\eta<\frac{67}{136}$. This extends the range of $P$ and $k$ achieved by Holowinsky, Munshi and Qi \cite{HybridSub}. The result is established using a new variant of the delta method.

\section{Introduction and Statement of Results}

Subconvexity bounds for $L$-functions have been a big area of study due to their connections to equidistribution problems and other arithmetic properties. In the context of $L\left(\frac{1}{2},\mathrm{Sym}^2 f\otimes\rho\right)$, with $f$ a full level holomorphic Hecke eigen cusp form of weight $k$, and $\rho$ a level $P$ holomorphic Hecke eigen cusp form with arbitrary nebentypus and fixed weight $\kappa$, establishing a subconvexity bound refers to showing the existence of some $\delta>0$ such that \begin{align*}
    L\left(\frac{1}{2},\mathrm{Sym}^2 f\otimes\rho\right)\ll_\varepsilon \left(P^3k^4\right)^{\frac{1}{4}-\delta+\varepsilon}
\end{align*}
for any $\varepsilon>0$. Although the size $\delta$ usually does not matter in applications, achieving any such nontrivial bound is difficult. In contrast, the above bound with $\delta=0$ is a consequence of the Phragm$\acute{\text{e}}$n Lindel$\ddot{\text{o}}$f principle and other standard complex analysis techniques.

One motivation to study this particular $L$-function is its striking connection with the Quantum Unique Ergodicity (QUE) conjecture introduced by Rudnick and Sarnak \cite{RS}. This conjecture has gathered huge attention and various techniques have been developed to attack the problem. In many cases, one can relate the QUE conjecture to the subconvexity bounds for $L\left(\frac{1}{2},\mathrm{Sym}^2 f\right)$ and $L\left(\frac{1}{2},\mathrm{Sym}^2 f\otimes\rho\right)$, with $\rho$ being a fixed form.

In 2013, Rizwanur Khan \cite{RizwanSub} employed a conditional amplifier in a first moment method to establish a subconvexity bound for $L\left(\frac{1}{2},\mathrm{Sym}^2 f\otimes\rho\right)$ when $f$ is fixed. Inspired by his work, R. Holowinsky, R. Munshi and Z. Qi \cite{HybridSub} used a first moment method to obtain a hybrid subconvexity bound when $f$ and $\rho$ both vary. Having both forms vary allowed them to remove the conditional amplifier. To be precise, if $f$ and $\rho$ are as above, they showed \cite[Cor 2.3]{HybridSub} that there exists explicit $\delta>0$ such that for any $\varepsilon>0$, \begin{align*}
    L\left(\frac{1}{2},\mathrm{Sym}^2 f\otimes\rho\right)\ll_\varepsilon \left(P^3k^4\right)^{\frac{1}{4}-\delta+\varepsilon}
\end{align*}
if $P^{\frac{13}{64}+\eta}<k<P^{\frac{3}{8}-\eta}$ for some $0<\eta<\frac{11}{128}$.

In recent years, various delta methods have been developed and extensively applied to solve boundary value problems. Kumar, Munshi and Singh \cite{MunshiHybrid} recently showed hybrid subconvexity for $L\left(\frac{1}{2},\mathrm{Sym}^2 f\otimes\rho\right)$ with $f$ level $P_1$, $\rho$ level $P_2$, both with fixed weight, when $P_2^{1/2+\varepsilon}<P_1<P^{3/2-\varepsilon}$. In this paper, we approach the subconvexity problem with a new variant of the delta method which we develop here. We demonstrate how our delta method can be applied to study boundary value problems, and in this context we achieve a hybrid subconvexity bound for $L\left(\frac{1}{2},\mathrm{Sym}^2 f\otimes\rho\right)$, extending the range achieved by \cite{HybridSub}.

\subsection{Reformation of the Delta Method}

In Appendix \ref{DeltaAppendix}, we prove the following delta method which we use to tackle the hybrid subconvexity problem. Let $\varepsilon>0$, $n$ be an integer such that $|n|\ll N\rightarrow\infty$ and let $C>N^\varepsilon$ be a parameter. Let $U\in C_c^\infty(\R), W\in C_c^\infty([-2,-1]\bigcup [1,2])$ be a fixed non-negative even function such that $U(x)=1$ for $-2\leq x\leq 2$. Then we have \begin{align*}
        \delta(n=0)=\frac{1}{\mathcal{C}}\sum_{c\geq1}\frac{1}{c}\sum_{\alpha\Mod{c}}e\left(\frac{\alpha n}{c}\right)h\left(\frac{c}{C},\frac{n}{cC}\right),
    \end{align*}
    with $\mathcal{C}=\displaystyle\sum_{c\geq1}W\left(\frac{c}{C}\right)\sim C$ and \begin{align*}
        h\left(x,y\right)=W\left(x\right)U\left(x\right)U\left(y\right)-W(y)U(x)U(y).
    \end{align*}
    In particular, $h$ is a fixed smooth function satisfying $h(x,y)\ll \delta(|x|,|y|\ll 1).$

One can observe that our delta method is not too different from the delta method of Duke-Friedlander-Iwaniecs \cite[\S 3]{DFIdelta}, \cite[Thm 1]{heathbrown} together with a "conductor lowering trick" which truncates the size of the equation. One could also draw comparison to Kloosterman's refinement of the circle method instead \cite{kloostermanrefinement}, \cite{munshi3}. Indeed, our version of the delta method is inspired by the start of the proof of the D-F-I's delta method \cite[\S 3]{DFIdelta}, with the additional freedom of choice of parameters seen in the "conductor lowering" function $U$. See Appendix \ref{DeltaAppendix} for details.

The main advantage of our delta method, for our application here, is that it is very explicit and our function $U$ truncates $|n|\ll cCN^\varepsilon$, as compared to the usual truncation of $|n|\ll C^2N^\varepsilon$ if one adds an artificial "conductor lowering" to the D-F-I delta method. As a result, the reader will see this simplifies some of the initial dual summation steps. On the other hand, if one uses the Fourier inversion version of the D-F-I delta method, i.e. \begin{align*}
    \delta(n=0) = \frac{1}{C}\sum_{1\leq c\leq C} \frac{1}{c}\sumast_{\alpha\Mod{c}}e\left(\frac{an}{c}\right) \int_{\R} g(c,x) e\left(\frac{nx}{cC}\right)dx.
\end{align*}
While we have $g(c,x)\ll C^\varepsilon$, it is an implicit function and the dependence on q can create technical difficulties. The Fourier inversion version of our delta method comes with $\tilde{U}(x)$ (the Fourier transform of $U$) instead, and is a fixed function.

\subsection{Hybrid Subconvexity}

Let $f$ be a full level holomorphic Hecke eigen cusp form of weight $k$ and $\rho$ be a primitive level $P$ holomorphic Hecke eigen cusp form with arbitrary nebentypus and fixed weight $\kappa$ as above. By the approximate functional equation (\ref{AFE}) and a smooth dyadic subdivision, we will prove the following lemma in Section \ref{SetupSection}.

\begin{lemma}\label{SetupLemma}
    Let $\varepsilon>0$ and let $f, \rho$ as above with $P>k^\varepsilon$. Then for any $A>0$, we have \begin{align*}
        L\left(\frac{1}{2},\mathrm{Sym}^2 f\otimes \rho\right)\ll_\varepsilon \sup_{\frac{1}{2}\leq N\leq P^{3/2+\varepsilon}k^2}\sum_{(d, P)=1} \left|\sum_{n\geq1} \frac{A_{F}(1, n) \la_\rho(d n)}{\sqrt{N}} V\left(\frac{d^{3} n}{N}\right)\right|+P^{-A},
    \end{align*}
    for some $P^\varepsilon$-inert function $V$ supported on $[1,2]$, depending on $f$ and $\rho$.
\end{lemma}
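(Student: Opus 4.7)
The plan is to apply the approximate functional equation (\ref{AFE}) to $L(1/2, \mathrm{Sym}^2 f \otimes \rho)$. Since $\mathrm{Sym}^2 f \otimes \rho$ has finite conductor $\asymp P^3$ and archimedean conductor $\asymp k^4$, this degree six $L$-function has analytic conductor $\asymp P^3 k^4$. The AFE at $s = 1/2$ therefore expresses $L(1/2, \mathrm{Sym}^2 f \otimes \rho)$ as a smooth sum of Dirichlet coefficients $c_m/\sqrt{m}$ effectively truncated at $m \leq P^{3/2+\varepsilon} k^{2+\varepsilon}$, up to an error $\ll P^{-A}$.

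Next I would insert the factorisation of the Dirichlet coefficients coming from the Euler product: at primes coprime to $P$ one has $c_m = \sum_{d^3 n = m} A_F(1, n) \lambda_\rho(d n)$ (absorbing bounded auxiliary Dirichlet factors into the weight), where $A_F(1, n)$ are the Fourier coefficients of $\mathrm{Sym}^2 f$. Substituting, interchanging the order of summation, and applying a smooth dyadic partition in the size of $d^3 n$ yields $O(\log P k)$ sums of the shape in the lemma, supported on dyadic scales $N \leq P^{3/2+\varepsilon} k^2$; the smooth weight $V$ is obtained by restricting and rescaling the AFE weight to each dyadic range.

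To isolate the $(d, P) = 1$ sum, I would show that the ramified contribution $P \mid d$ is negligible. When $P \mid d$, the dyadic condition forces $n \ll N / P^3 \ll k^2 / P^{3/2+\varepsilon}$; by the hypothesis $P > k^\varepsilon$ this range is so short that the contribution can be bounded by $\ll P^{-A}$ using Cauchy--Schwarz together with the Rankin--Selberg mean value bound $\sum_{n \leq X} |A_F(1, n)|^2 \ll_\varepsilon (kX)^\varepsilon X$ and Deligne's bound for $\lambda_\rho$.

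The main obstacle I foresee is verifying that the resulting $V$ is genuinely $P^\varepsilon$-inert on $[1, 2]$. Since the AFE weight arises as a Mellin inverse of a ratio of archimedean gamma factors for $\mathrm{Sym}^2 f$ and $\rho$, Stirling's formula controls its derivatives, with the $k$- and $\kappa$-dependent archimedean parameters absorbed into the dependence of $V$ on $f$ and $\rho$, so that only a $P^\varepsilon$-inert residue on the compact dyadic range remains. Once this is confirmed, the lemma follows by taking the supremum over the $O(\log)$ dyadic scales.
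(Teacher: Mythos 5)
The key structural point you have missed is that the approximate functional equation \eqref{AFE} \emph{already} produces the sum restricted to $(d,P)=1$. This is because before applying the AFE the paper uses the Hecke relations \eqref{HeckeRelations} to rewrite
\[
L(s,\operatorname{Sym}^2 f\otimes\rho)=L(2s,\operatorname{Sym}^2 f)\,(1-\la_\rho(P)P^{-s})\sum_{(d,P)=1}\mu(d)d^{-3s}\sum_{n\geq1}A_F(1,n)\la_\rho(dn)n^{-s},
\]
and absorbs the first two factors into the weight $V_{1/2}$ (this is why $V_{1/2}$ contains the ratio $L(1+2u,\operatorname{Sym}^2 f)/L_N(\tfrac32+3u,\rho)$). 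Hence the $(d,P)=1$ condition appears exactly, with no ramified contribution to estimate, and the $O(P^{-A})$ error comes solely from the dyadic decomposition and the rapid decay of $V_{1/2}$. Your plan instead applies the AFE to the raw coefficients $c_m$, factorizes $c_m$ afterwards (note you also dropped the Möbius factor: the correct decomposition is $c_m=\sum_{d^3n=m}\mu(d)A_F(1,n)\la_\rho(dn)$ after peeling off $L(2s,\operatorname{Sym}^2 f)$), and then tries to discard $P\mid d$ by hand. This is where the argument breaks.

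Your claim that the $P\mid d$ piece is $\ll P^{-A}$ ``because $P>k^\varepsilon$'' does not hold. If $P\mid d$ then $d\geq P$ and $n\ll N/P^3\leq k^2/P^{3/2-\varepsilon}$; for $k\gtrsim P^{3/4}$ (well within the theorem's range $P^{1/4}<k<P^{21/17}$) this is a range of length up to roughly $P^{33/34}$, which is far from empty. Trivially bounding gives a contribution of order $P^\varepsilon\sqrt{N}\sum_{P\mid d}d^{-3}\ll P^\varepsilon\sqrt{N}/P^{2}$, which for $N$ near $P^{3/2}k^2$ is a power of $P$, not $P^{-A}$. The hypothesis $P>k^\varepsilon$ only enters the proof to control the $k$-dependence of $V$ (so that $(kN)^\varepsilon\ll P^{O(\varepsilon)}$ and $V$ is $P^\varepsilon$-inert), not to make the $P\mid d$ range short. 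To repair your argument you should perform the Euler-product/Hecke-relation factorization \emph{before} applying the AFE, exactly as in \eqref{AFE}; otherwise the lemma as stated, with an $O(P^{-A})$ error, is not reached.
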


Define \begin{align}\label{SNsetup}
    S_d(N)=\sum_n\la_\rho(dn)A(1,n)V\left(\frac{d^3n}{N}\right).
\end{align}
Then Lemma \ref{SetupLemma} reduces the problem of subconvexity to bounding $S_d(N)$. In Section \ref{MainthmSection1} and \ref{MainthmSection2} we apply our delta method in Lemma \ref{DeltaMethod} to prove the following main theorem.

\begin{thm}\label{mainthm}
    Let $\varepsilon>0$ and let $f, \rho$ be as above with $P>k^\varepsilon$. Let $N\leq P^{3/2+\varepsilon}k^2$, $(d,P)=1$, we have \begin{align}\label{trivialBound}
        S_d(N)\ll\frac{NP^\varepsilon}{d^3}.
    \end{align}
    Moreover, if $P^\frac{1}{4}<k\leq P^\frac{6}{5}, k>d^3, P>d^\frac{8}{3}$ and $N\geq (Pk)^\frac{4}{3}d^2+P^\frac{8}{5}k^\frac{4}{5}d^\frac{12}{5}+P^2d^2$, then we have \begin{align}\label{mainBound1}
        S_d(N)\ll P^\varepsilon\left(\frac{P^\frac{1}{10}N^\frac{19}{20}}{d^\frac{12}{5}}+\frac{P^\frac{1}{6}N^\frac{3}{4}k^\frac{2}{3}}{d^\frac{3}{2}}+\frac{P^\frac{1}{6}N^\frac{5}{6}k^\frac{1}{3}}{d^2}+\frac{\sqrt{P}N^\frac{3}{4}}{d^2}\right).
    \end{align}
    If $P^\frac{6}{5}\leq k< P^\frac{21}{17}, k>d^3, P>d^{24}$ and $P>d^{24} \text{ and } N\geq (Pk)^\frac{4}{3}d^2+P^\frac{5}{3}k^\frac{19}{18}d^\frac{7}{3}+P^2d^2$, we have \begin{align}\label{mainBound2}
        S_d(N)\ll P^\varepsilon\frac{P^\frac{1}{12}N^\frac{3}{4}k^\frac{53}{72}}{d^\frac{19}{12}}.
    \end{align}
\end{thm}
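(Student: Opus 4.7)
The trivial bound (\ref{trivialBound}) follows by Cauchy--Schwarz on the defining sum (\ref{SNsetup}) together with the Rankin--Selberg estimates $\sum_{n\ll X} |\la_\rho(dn)|^2 \ll X(dPX)^\varepsilon$ and $\sum_{n \ll X} |A(1,n)|^2 \ll X(kX)^\varepsilon$, applied with $X = N/d^3$.

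For the nontrivial bounds my plan is to apply the delta method of Lemma \ref{DeltaMethod} to detach the two Hecke eigenvalues in (\ref{SNsetup}). I would introduce an auxiliary smooth variable $m \asymp N/d^3$ and rewrite
\begin{align*}
    S_d(N) = \sum_{n,m} \la_\rho(dm)\,A(1,n)\,V\!\left(\tfrac{d^3n}{N}\right) W_0\!\left(\tfrac{d^3m}{N}\right)\delta(n-m=0),
\end{align*}
where $W_0$ is a fixed smooth function equal to $1$ on the support of $V$. The delta method, applied with a parameter $C$ to be optimised in terms of $P,k,N,d$, then gives
\begin{align*}
    S_d(N) = \frac{1}{\mathcal{C}}\sum_{c \ll C}\frac{1}{c}\sum_{\alpha \Mod{c}} T_n(c,\alpha)\,T_m(c,\alpha),
\end{align*}
where $T_n$ and $T_m$ are the additively twisted $n$- and $m$-sums respectively. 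The particular advantage of Lemma \ref{DeltaMethod} is that its weight $U$ directly truncates $n-m$ to the range $|n-m|\ll cCN^\varepsilon$, which streamlines the dualisations to follow.

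Next I would dualise both inner sums: apply $\mathrm{GL}(3)$ Voronoi summation to $T_n$ (transforming $A(1,n)$ against the analytic conductor $k^2$ of $\mathrm{Sym}^2 f$) and $\mathrm{GL}(2)$ Voronoi at level $P$ to $T_m$ (transforming $\la_\rho(dm)$, treating the cases $(c,P)=1$ and $P\mid c$ separately). After these two transforms, the sum over $\alpha \Mod{c}$ collapses to an explicit Kloosterman/character sum. At this point I would apply Cauchy--Schwarz in the $c$-aspect against the shorter of the two dual variables (keeping the longer Hecke-coefficient sum on the outside), open the square, and execute Poisson summation on the remaining inner variable to harvest the cancellation from the resulting exponential sums modulo $c$ and modulo $P$. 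Separating the diagonal contribution from the off-diagonal and then optimising $C$ leads to the multi-term bound (\ref{mainBound1}) in the first range of $k$.

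The main obstacle will be the simultaneous balancing of the five parameters $C$, $P$, $k$, $d$, $N$ (together with the two dual Voronoi lengths) across the full claimed range of $k$, and this is precisely what forces the split between (\ref{mainBound1}) and (\ref{mainBound2}). Once $k$ exceeds $P^{6/5}$ the dominant term of (\ref{mainBound1}) ceases to be the binding one, so in Section \ref{MainthmSection2} a different choice of $C$, together with a reversal of the role of the two dual sums in the Cauchy--Schwarz step, becomes optimal and yields the single-term bound (\ref{mainBound2}). Verifying that the lower bounds on $N$ stated in each case really are sharp enough to make every Voronoi dualisation a genuine saving, and handling the small-$c$ and $c\mid P$ degenerate ranges cleanly, will require some additional care.
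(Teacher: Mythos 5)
Your high-level plan --- detect $\delta(n-m=0)$ by the delta method, dualise both sums by $\mathrm{GL}(2)$ and $\mathrm{GL}(3)$ Voronoi, apply Cauchy--Schwarz followed by Poisson, split the diagonal from the off-diagonal, and optimise the circle-method parameter $C$ --- is exactly the skeleton of the paper's argument, and your alternative derivation of the trivial bound via Cauchy--Schwarz and Rankin--Selberg is also fine (the paper simply applies the pointwise Deligne-type bounds (\ref{DeligneBound}), (\ref{SymDeligneBound})). However there are two places where your sketch misses what actually makes the theorem come out the way it does.

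First, you do not address the \emph{transitional range} of the $\mathrm{GL}(3)$ Voronoi integral transform, which is the genuine technical obstacle in the proof. After the paper applies $\mathrm{GL}(2)$ Voronoi first (which, using the conductor-lowering weight $h$, cleanly produces a phase $e\left(\eta\, 2\sqrt{dmn}/bP^{(1+v)/2}\right)$), the subsequent $\mathrm{GL}(3)$ Voronoi integral $\psi^{\pm,\eta}_{v,d}$ in (\ref{PsiDef}) has a stationary-phase analysis that degenerates when the effective frequency is of size $\asymp k$ (the ``transitional'' case $\frac{(Bd)^2P^{1+v-\varepsilon}k^2}{N}\ll M\ll \frac{(Bd)^2P^{1+v+\varepsilon}k^2}{N}$). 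The paper devotes Lemma \ref{HugeIntegralAnalysis} and the entire ``Fourth Case'' of Section \ref{MainthmSection2} to this, producing the extra term $\frac{\sqrt{P}N^2}{C^{5/2}d^{9/2}k^{7/12}}$ in the final bound. Without this analysis you cannot justify the restriction $k<P^{21/17}$ nor obtain (\ref{mainBound2}) at all.

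Second, your explanation of why (\ref{mainBound1}) and (\ref{mainBound2}) require separate treatment is not what the paper does. You propose ``a reversal of the role of the two dual sums in the Cauchy--Schwarz step'' once $k$ exceeds $P^{6/5}$. In fact the paper performs Cauchy--Schwarz in the same direction throughout (always taking the dual $\mathrm{GL}(3)$ variable $n$ outside in (\ref{S2AfterCauchy}) and then Poisson on that same $n$-variable). The two cases in Theorem \ref{mainthm} arise solely from the final optimisation of $C$ against the fixed five-term bound $\frac{C^{3/2}\sqrt{N}}{d^{3/2}\sqrt{P}}+\frac{\sqrt{CN}k}{d}+\frac{\sqrt{P}N^{5/4}}{Cd^3}+\frac{C^2k}{\sqrt{P}}+\frac{\sqrt{P}N^2}{C^{5/2}d^{9/2}k^{7/12}}$: for $k\leq P^{6/5}$ the choice (\ref{CChoice}) makes the third term binding, while for $k\geq P^{6/5}$ one must instead balance the second against the fifth (transitional) term, forcing the single-term shape of (\ref{mainBound2}). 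A related small slip: after Cauchy--Schwarz the Poisson summation is applied to the \emph{outer} variable $n$ (to convert the congruence information into a delta condition modulo $\frac{b_{1,d}b_{2,d}P^{2v}}{n_1n_2}$), not to ``the remaining inner variable'' as you write.
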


Applying the bounds in Theorem \ref{mainthm} appropriately, we prove the following corollary in Section \ref{maincorSect}.

\begin{cor}\label{maincor}
    Let $\varepsilon>0$ and let $f, \rho$ as above. Then for $P^\frac{1}{4}<k< P^\frac{21}{17}$, we have
    \begin{align*}
        L\left(\frac{1}{2},\mathrm{Sym}^2 f\otimes \rho\right)\ll_\varepsilon P^\varepsilon\left(P^{\frac{31}{40}}k^{\frac{9}{10}}+P^{\frac{13}{24}}k^{\frac{7}{6}}+P^{\frac{2}{3}}k+P^{\frac{11}{24}}k^{\frac{89}{72}}\right).
    \end{align*}
    In particular, for $P^{\frac{1}{4}+\eta}<k<P^{\frac{17}{21}-\eta}$ with $0<\eta<\frac{67}{136}$, there exists computable $\delta>0$ such that for any $\varepsilon$, \begin{align*}
        L\left(\frac{1}{2},\mathrm{Sym}^2 f\otimes \rho\right)\ll_\varepsilon (P^3k^4)^{\frac{1}{4}-\delta+\varepsilon}.
    \end{align*}
\end{cor}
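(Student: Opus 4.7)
The plan is to feed the bounds of Theorem~\ref{mainthm} into the reduction provided by Lemma~\ref{SetupLemma} and to optimize the parameter $N$. After Lemma~\ref{SetupLemma}, the task reduces to bounding
\[
\sup_{N \leq P^{3/2+\varepsilon}k^2}\;\sum_{(d,P)=1} \frac{|S_d(N)|}{\sqrt{N}}
\]
up to a negligible error; the compact support of $V$ forces $d^3 \ll N$, so the sum over $d$ is effectively finite. I split the range of $k$ into the two regimes of Theorem~\ref{mainthm}, namely $P^{1/4} < k \leq P^{6/5}$ and $P^{6/5} \leq k < P^{21/17}$, and within each regime split according to whether $N$ meets the lower threshold required for the non-trivial bound; below the threshold I fall back on the trivial bound~(\ref{trivialBound}).

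For each term of shape $P^a N^b k^c d^{-e}$ appearing in (\ref{mainBound1})--(\ref{mainBound2}), one has $e>1$, so dividing by $\sqrt{N}$ and summing over $d$ produces an absolutely convergent series and contributes $P^a N^{b-1/2} k^c$. Maximizing in $N$ by taking $N=P^{3/2+\varepsilon}k^2$ converts the four terms of (\ref{mainBound1}) into $P^{31/40}k^{9/10}$, $P^{13/24}k^{7/6}$, $P^{2/3}k$ and $P^{7/8}k^{1/2}$, and the single term of (\ref{mainBound2}) into $P^{11/24}k^{89/72}$. A direct comparison shows that under the hypothesis $k>P^{1/4}$ the auxiliary term $P^{7/8}k^{1/2}$ is dominated by $P^{31/40}k^{9/10}$, leaving exactly the four terms listed in the corollary.

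The trivial-bound regime, i.e.\ when $N$ lies below the relevant threshold, has to be treated separately. Each piece of the threshold $(Pk)^{4/3}d^2+P^{8/5}k^{4/5}d^{12/5}+P^2d^2$, or its analogue $(Pk)^{4/3}d^2+P^{5/3}k^{19/18}d^{7/3}+P^2d^2$ in the second regime, contributes after (\ref{trivialBound}) and the $d$-sum the square root of that piece. A short case-by-case check shows that $(Pk)^{2/3}$, $P^{4/5}k^{2/5}$, $P$, and $P^{5/6}k^{19/36}$ are each absorbed by one of the four corollary terms once $k>P^{1/4}$ in the first regime and $k>P^{6/5}$ in the second, so no new term appears. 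I expect this bookkeeping to be the main source of potential slips, but it is entirely mechanical.

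Finally, to extract the subconvexity statement I compare each of the four terms with the convexity bound $(P^3 k^4)^{1/4}=P^{3/4}k$. The four resulting inequalities read $k>P^{1/4}$, $k<P^{5/4}$, an unconditional strict inequality, and $k<P^{21/17}$. Allowing a small slack $\eta>0$ on each side yields the window $P^{1/4+\eta}<k<P^{21/17-\eta}$, with non-emptiness exactly $2\eta<21/17-1/4=67/68$, i.e.\ $\eta<67/136$; in this window each of the four terms is strictly smaller than $(P^3 k^4)^{1/4}$ by a positive power, giving the asserted computable $\delta>0$.
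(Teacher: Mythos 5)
Your approach matches the paper's: apply Lemma~\ref{SetupLemma}, split the supremum over $N$ into the two $k$-regimes of Theorem~\ref{mainthm}, use the nontrivial bound when $N$ is above the threshold, use the trivial bound when $N$ is below, insert $N\leq P^{3/2+\varepsilon}k^2$, and sum over $d$ using that each exponent $e>1$. Your arithmetic (the four displayed terms, the absorption of $P^{7/8}k^{1/2}$, the absorption of $(Pk)^{2/3}$, $P^{4/5}k^{2/5}$, $P$, $P^{5/6}k^{19/36}$, and the four subconvexity inequalities yielding $\eta<\frac{67}{136}$) all checks out.

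However, there is a small omission in your bookkeeping. Theorem~\ref{mainthm} requires not only the lower bound on $N$ but also the conditions $k>d^3$ and $P>d^{8/3}$ in the first regime, respectively $k>d^3$ and $P>d^{24}$ in the second. When $d$ is large enough that one of these fails, the nontrivial bound is unavailable for \emph{any} $N$, and one must fall back on (\ref{trivialBound}) over the whole range $N\leq P^{3/2+\varepsilon}k^2$, giving a contribution
\[
\sum_{d\gg k^{1/3}\text{ or } d\gg P^{3/8}\text{ (resp.\ }P^{1/24}\text{)}}\frac{P^{3/4}k}{d^3}
\ll P^{3/4}k^{1/3}+k\quad\text{(resp.\ }P^{3/4}k^{1/3}+P^{2/3}k\text{)}.
\]
These extra terms are in fact dominated by $P^{2/3}k$ (resp.\ $P^{11/24}k^{89/72}$) once $k>P^{1/4}$ (resp.\ $k\geq P^{6/5}$), so the conclusion is unchanged, but this case must be checked explicitly rather than subsumed under the "$N$ below threshold" discussion, which is how the paper's proof treats it.
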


\subsection*{Acknowledgments}

The author thanks R. Holowinsky for suggesting the problem, guidance and encouragements.

\section{Notations}

Let $\varepsilon>0$, $a, b\in \C$, $c\in\Z$, $P\rightarrow\infty$.

\begin{spacing}{1.5} 
 \begin{tabbing} 
 \= Simbolo \= Separador \= Significado \+ \kill 
 {\bf Symbol} \> \> {\bf Meaning} \\ \\
 $a\ll b$ \> \> There exists constant $c>0$ such that $|a|\leq c|b|$\\
 $a\sim b$ \> \> $b\ll a\ll b$\\
 $a\sim_\varepsilon b$ \> \> $bP^\varepsilon\ll a\ll bP^\varepsilon$\\
 $(a,b)$ \> \> The greatest common divisor of $a$ and $b$\\
 $e(x)$ \> \> $e^{2\pi ix}$\\
 $\displaystyle\sumast_{\alpha\Mod{c}}$ \> \> Sum over $\alpha\in (\Z/c)^*$\\
 $S(a,b;c)$ \> \> The Kloosterman sum $\displaystyle\sumast_{\alpha\Mod{c}}e\left(\frac{a\alpha+b\overline{\alpha}}{c}\right)$
 \end{tabbing}
\end{spacing}

We say $f$ is $X$-inert if it is smooth and satisfies for any $x=(x_1,...,x_n)\in\R^n$, $j_1,...,j_n\in\N$, \begin{align*}
    \text{either } x_1^{j_1}\cdots x_n^{j_n}f^{(j_1,...,j_n)}(x)\ll X^{j_1+\cdots+j_n} \text{ or } f(x)\ll P^{-A} \text{ for any } A>0.
\end{align*}
Note that a similar definition is introduced by \cite{inert}, we include $f(x)\ll P^{-A}$ for any $A>0$ for convenience.

\section{Sketch of Proof of Theorem \ref{mainthm} in the simplified main case}

Here we demonstrate the main idea of the proof of Theorem \ref{mainthm} by sketching out the simplified main case when $P^\frac{1}{4}<k\leq P^\frac{6}{5}$. For simplicity, we take $\rho$ to be of trivial nebentypus and focus on $N=P^\frac{3}{2}k^2, d=1$. In this case, we want to beat the bound \begin{align*}
    S_1(P^\frac{3}{2}k^2)\ll P^\frac{3}{2}k^2.
\end{align*}
Start by applying our delta method to separate the oscillations. Apply Corollary \ref{DeltaCor} with $1\leq C^2\leq P^\frac{3}{2}k^2$, we roughly get \begin{align*}
    S_1(P^\frac{3}{2}k^2)\sim& \frac{1}{C}\sum_{c\leq C}\frac{1}{c}\sum_{m\sim P^\frac{3}{2}k^2}\la_\rho(m)V\left(\frac{m}{P^\frac{3}{2}k^2}\right)\sum_{n\sim P^\frac{3}{2}k^2}A(1,n)\sumast_{\alpha\Mod{c}}e\left(\frac{\alpha(n-m)}{c}\right)h\left(\frac{c}{C},\frac{n-m}{cC}\right),
\end{align*}
with $h$ being a fixed smooth function satisfying $h(x,y)\ll\delta(|x|,|y|\ll1)$. Splitting the case whether $P|c$ or not, we focus on the main case $(c,P)=1$ and $c\sim C$. Performing Voronoi summation on the $m$-sum yields \begin{align*}
    S_1(P^\frac{3}{2}k^2)\sim& \frac{1}{C^3\sqrt{P}}\sum_{c\sim C} \sum_m\la_{\overline{\rho}}(m)\sum_{n\sim P^\frac{3}{2}k^2}A(1,n)S(n,\overline{P}m;c)\\
    &\times \int_0^\infty V\left(\frac{y}{P^\frac{3}{2}k^2}\right)h\left(\frac{c}{C},\frac{n-y}{cC}\right)J_{\kappa-1}\left(\frac{4\pi\sqrt{my}}{c\sqrt{P}}\right)dy.
\end{align*}
One can see in the proof in Section \ref{VoronoiSection} that having the fixed function $h$ with fixed compact support on both variables makes it easier to extract the asymptotics from the integral, leading us to obtain \begin{align*}
    S_1(P^\frac{3}{2}k^2)\sim\frac{1}{P^\frac{5}{4}k}\sum_\pm\sum_{c\sim C}\sum_{m\ll\frac{P^\frac{5}{2}k^2}{C^2}}\la_{\overline{\rho}}(m)\sum_{n\sim P^\frac{3}{2}k^2}A(1,n)S(n,\overline{P}m;c)e\left(\pm\frac{2\sqrt{mn}}{c\sqrt{P}}\right).
\end{align*}

Next we perform Voronoi summation on the $n$-sum. The integral transform that appears is quite complicated and this is the most computationally heavy part of the proof. There are also some non-generic "transition range" cases which create obstacles and prevents us from getting subconvexity bound for larger values of $k$. Focusing on the main case $m\sim \frac{P^\frac{5}{2}k^2}{C^2}$ and choosing $C>P^\frac{3}{4}\sqrt{k}$, analysis of the integral transform yields roughly \begin{align*}
    S_1(P^\frac{3}{2}k^2)\sim\frac{Ck}{\sqrt{P}}\sum_\pm\sum_{c\sim C}\sum_{m\sim\frac{P^\frac{5}{2}k^2}{C^2}}\la_{\overline{\rho}}(m)\sum_{n\sim Ck^2}\frac{A(n,1)}{n}e\left(-\frac{P\overline{m}n}{c}\right)e\left(\mp\frac{ck^2m}{4\pi^2Pn}\right).
\end{align*}
At this point the trivial bound is \begin{align*}
    S_1(P^\frac{3}{2}k^2)\ll P^\varepsilon\frac{Ck}{\sqrt{P}}C\frac{P^\frac{5}{2}k^2}{C^2}\ll P^{2+\varepsilon}k^3.
\end{align*}
We have to save $\sqrt{P}k$ and a little bit more.

Finally, we apply Cauchy-Schwartz and take out the $n$-sum, giving us \begin{align*}
    S_1(P^\frac{3}{2}k^2)\ll \sqrt{\frac{C}{P}}\left(\sum_{n\sim Ck^2}\sum_{c_1\sim C}\sum_{c_2\sim C}\mathop{\sum\sum}_{m_1,m_2\sim\frac{P^\frac{5}{2}k^2}{C^2}}\la_{\overline{\rho}}(m_1)\overline{\la_{\overline{\rho}}(m_2)}e\left(-\frac{P\overline{m_1}n}{c_1}+\frac{P\overline{m_2}n}{c_2}\mp\frac{c_1k^2m_1}{4\pi^2Pn}\pm\frac{c_2k^2m_2}{4\pi^2Pn}\right)\right)^\frac{1}{2}.
\end{align*}
The two extreme cases are the diagonal $\Delta$, i.e. $c_1=c_2,m_1=m_2$, and the worst offdiagonal $\mathcal{O}$, when $|c_1m_1-c_2m_2|\sim\frac{P^\frac{5}{2}k^2}{C}$. For the diagonal, we have the bound \begin{align*}
    \Delta\ll P^\varepsilon\sqrt{\frac{C}{P}}\sqrt{Ck^2C\frac{P^\frac{5}{2}k^2}{C^2}}\ll \sqrt{C}P^{\frac{3}{4}+\varepsilon}k^2.
\end{align*}
For the offdiagonal, we apply Poisson summation to the $n$-sum and apply stationary phase to extract the asymptotics from the Fourier transform, which in the worst case scenario $|c_1m_1-c_2m_2|\sim\frac{P^\frac{5}{2}k^2}{C}$ gives us \begin{align*}
    \mathcal{O}\ll\sqrt{\frac{C}{P}}\left(\frac{C^2k}{P^\frac{3}{4}}\sum_{n\sim \frac{P^\frac{3}{2}}{C}}\sum_{c_1\sim C}\sum_{c_2\sim C}\mathop{\sum\sum}_{m_1,m_2\sim\frac{P^\frac{5}{2}k^2}{C^2}}\la_{\overline{\rho}}(m_1)\overline{\la_{\overline{\rho}}(m_2)}\delta\left(n\equiv P\overline{m_1}c_2-P\overline{m_2}c_1\Mod{c_1c_2}\right)\right)^\frac{1}{2}.
\end{align*}
Focusing on the generic case $(c_1,c_2)=1$, we have that $n,c_1,c_2$ determines $m_1,m_2$ mod $c_1,c_2$ respectively. Therefore we get the bound \begin{align*}
    \mathcal{O}\ll P^\varepsilon \sqrt{\frac{C}{P}\frac{C^2k}{P^\frac{3}{4}}\frac{P^\frac{3}{2}}{C}C^2\left(1+\frac{P^\frac{5}{2}k^2}{C^3}\right)^2}\ll P^\varepsilon\left(\frac{C^2\sqrt{k}}{P^\frac{1}{8}}+\frac{P^\frac{19}{8}k^\frac{5}{2}}{C}\right).
\end{align*}
Combining the above bounds for $\Delta$ and $\mathcal{O}$, we have roughly \begin{align*}
    S_1(P^\frac{3}{2}k^2)\ll P^\varepsilon\left(\sqrt{C}P^\frac{3}{4}k^2+\frac{C^2\sqrt{k}}{P^\frac{1}{8}}+\frac{P^\frac{19}{8}k^\frac{5}{2}}{C}\right).
\end{align*}
Optimizing these three terms by choosing $C=P^\frac{13}{12}k^\frac{1}{3}+P^\frac{5}{6}k^\frac{2}{3}$, we get the bound \begin{align*}
    S_1(P^\frac{3}{2}k^2)\ll P^\varepsilon\left(P^\frac{31}{24}k^\frac{13}{6}+P^\frac{37}{24}k^\frac{11}{6}\right).
\end{align*}
One can see that such a bound beats $P^\frac{3}{2}k^2$ when $P^\frac{1}{4}<k<P^\frac{5}{4}$.

In the actual proof, however, there is one case in which we are not able to achieve the same subconvexity range $P^\frac{1}{4}<k<P^\frac{5}{4}$. That case comes from the new dual length of $m$ being of certain size, which puts the GL(3) Voronoi integral transform in its "transition range". We were not able to treat this non-generic case in the same fashion as the main case and therefore our range of subconvexity is ultimately reduced to $P^\frac{1}{4}<k<P^\frac{21}{17}$.

\section{Preliminaries}

\subsection{Holomorphic Hecke Cusp forms}

For $\rho$ a primitive holomorphic Hecke eigen cusp form of squarefree level $N$ with nebentypus $\chi$ and fixed weight $\kappa$, we have the Fourier expansion \begin{align*}
    \rho(z)=\sum_{n\geq1}a_\rho(n)e(nz).
\end{align*}
We can rewrite $a_\rho(n)$ as \begin{align*}
    a_\rho(n)=\la_\rho(n)a_\rho(1)n^\frac{k-1}{2},
\end{align*}
where $\la_\rho(n)$ are the Hecke eigenvalues, also known as the normalized Fourier coefficients. They satisfy the Hecke relations \begin{align*}
    \la_\rho(m)\la_\rho(n)=\sum_{\substack{d|(m,n)\\(d,N)=1}}\chi(d)\la_\rho(mn/d^2).
\end{align*}
By M$\ddot{\text{o}}$bius inversion, this is equivalent to \begin{align}\label{GL2Hecke}
    \la_\rho(mn)=\sum_{\substack{d|(m,n)\\(d,N)=1}}\mu(d)\chi(d)\la_\rho(m/d)\la_\rho(n/d).
\end{align}
Moreover, these normalized Fourier coefficients satisfy the bound \begin{align}\label{DeligneBound}
    |\la_\rho(n)|\ll n^{\varepsilon}
\end{align} for any positive integer $n$ by the work of Deligne. Moreover, they satisfy the following Voronoi summation formula by \cite[Thm A4]{KMV}.

\begin{lemma}[GL(2) Voronoi Summation]\label{GL2Voronoi}
    Let $a,c\neq0$ be integers such that $(a,c)=1$, $N_1=(c,N)$, $N_2=N/(c,N)$ and let $F\in C_c^\infty(\R^+)$. Let $\chi=\chi_{N_1}\chi_{N_2}$ be the unique decomposition into characters mod $N_1$ and $N_2$ respectively. There exists $\eta_\rho\in\C$ such that $|\eta_\rho|=1$ and the following holds, \begin{align*}
        \sum_{n\geq1}\la_\rho(n)e\left(\frac{an}{c}\right)F(n)=2\pi i^\kappa\chi_{N_1}(\overline{a})\chi_{N_2}(-c)\frac{\eta_\rho}{c\sqrt{N_2}}\sum_{n\geq1}\la_{\overline{\rho}}(n)e\left(-\frac{\overline{a N_2}n}{c}\right)\int_0^\infty F(y)J_{\kappa-1}\left(\frac{4\pi\sqrt{ny}}{c\sqrt{N_2}}\right)dy.
    \end{align*}
\end{lemma}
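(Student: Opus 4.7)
The plan is to derive this identity from Mellin inversion together with the functional equation of the additively-twisted Hecke $L$-function of $\rho$. Writing $\tilde{F}(s)=\int_0^\infty F(y)y^{s-1}dy$ for the Mellin transform of $F$, I would express the left-hand side as the contour integral
$$\sum_{n\geq1}\la_\rho(n)e\!\left(\frac{an}{c}\right)F(n) = \frac{1}{2\pi i}\int_{(\sigma)}\tilde{F}(s)\,L_\rho\!\left(s,\tfrac{a}{c}\right)ds,$$
where $L_\rho(s,a/c):=\sum_{n\geq1}\la_\rho(n)e(an/c)n^{-s}$ converges absolutely for $\re(s)>1$. Since $\rho$ is a cuspidal newform, this additively twisted Dirichlet series extends to an entire function, and the whole problem reduces to establishing its functional equation under $s\mapsto\kappa-s$.

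The heart of the argument is therefore a modular computation: one finds a matrix in $SL_2(\Z)$, adjusted by the Fricke involution $W_{N_2}$ on the $N_2$-part of the level (with $N=N_1N_2$, $N_1=(c,N)$), which sends the cusp $a/c$ to $\infty$ for $\Gamma_0(N)$. Applying this to $\rho(z+a/c)$ and comparing Fourier expansions produces an identity relating $\rho_a$ to a translate of $\overline{\rho}$ at the reciprocal width-adjusted cusp, with shift $-\overline{aN_2}/c$. This modular identity, combined with the splitting $\chi=\chi_{N_1}\chi_{N_2}$ and the Atkin--Lehner pseudo-eigenvalue $\eta_\rho$ of $W_{N_2}$ (of modulus one, since $\rho$ is a newform), yields the required functional equation with character factors $\chi_{N_1}(\overline{a})\chi_{N_2}(-c)$ and constant $\eta_\rho/(c\sqrt{N_2})$. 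Shifting the contour to $\re(s)=\kappa-\sigma$ and substituting the functional equation then gives the dual sum over $\la_{\overline{\rho}}(n)e(-\overline{aN_2}n/c)$, and the resulting Mellin--Barnes integral against $\tilde{F}(\kappa-s)$ and the GL(2) gamma ratio is the standard representation of the Bessel kernel $J_{\kappa-1}\!\left(4\pi\sqrt{ny}/(c\sqrt{N_2})\right)$ integrated against $F(y)$, producing the stated right-hand side.

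The main obstacle is the careful bookkeeping of the level and nebentypus: one must verify that $W_{N_2}$ sends the newform $\rho$ to a unit-modulus scalar times $\overline{\rho}$, that the nebentypus splits cleanly as $\chi_{N_1}\chi_{N_2}$ across $(c,N)$ and $N/(c,N)$, and that all arithmetic factors combine into exactly $2\pi i^\kappa\chi_{N_1}(\overline{a})\chi_{N_2}(-c)\eta_\rho/(c\sqrt{N_2})$. In particular, one must handle separately the coprime and non-coprime parts of $c$ relative to $N$, since only the $N_1$-part can be absorbed into a multiplicative character twist while the $N_2$-part genuinely requires the Fricke involution. The remaining ingredients---absolute convergence on the shifted contour via Rankin--Selberg bounds, the Mellin--Barnes identification of $J_{\kappa-1}$, and the gamma-ratio manipulation via Stirling---are all classical once the modular identity has been verified.
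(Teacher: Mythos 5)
The paper does not prove this lemma; it is quoted from \cite[Thm A4]{KMV}, so there is no in-paper argument against which to compare your sketch. That said, the strategy you outline is the standard route to such level-aspect Voronoi formulas: Mellin-invert $F$, invoke the analytic continuation and functional equation of the additively twisted Dirichlet series $L_\rho(s,a/c)$, establish that functional equation by a modular transformation that combines an $SL_2(\Z)$ element with the Atkin--Lehner involution $W_{N_2}$ on the $N_2$-part of the level (together with the nebentypus split $\chi=\chi_{N_1}\chi_{N_2}$ across $(c,N)$ and $N/(c,N)$), and then recognize the resulting gamma ratio as the Mellin transform of the Bessel kernel $J_{\kappa-1}$. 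This is precisely the shape of KMV's proof of their Theorem A4, and the plan is sound.

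One detail to correct in a full write-up: since $\lambda_\rho(n)$ are the \emph{normalized} Hecke eigenvalues, the Dirichlet series $L_\rho(s,a/c)$ lives in the critical strip $0\le\re(s)\le 1$ and its functional equation is under $s\mapsto 1-s$, not $s\mapsto \kappa-s$. The weight $\kappa$ enters only through the archimedean factor $\Gamma\left(s+\tfrac{\kappa-1}{2}\right)$, whose ratio across $s\mapsto 1-s$ is what produces the $J_{\kappa-1}$ kernel on the dual side; the contour should therefore be shifted to $\re(s)=1-\sigma$. The reflection $s\mapsto\kappa-s$ would instead correspond to working with the unnormalized coefficients $a_\rho(n)=\lambda_\rho(n)a_\rho(1)n^{(\kappa-1)/2}$, which is not the normalization in the statement. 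This is a normalization slip rather than a flaw in the approach, but getting it right is essential to land on $J_{\kappa-1}$ with the exact argument $4\pi\sqrt{ny}/\left(c\sqrt{N_2}\right)$.
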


\subsection{Symmetric Square \texorpdfstring{$L$}{L}-function}

Let $f$ be a full level holomorphic Hecke eigen cusp form of weight $k$ and let $\la_f(n)$ be the $n$-th Hecke eigenvalue for any $n\geq1$. The symmetric square $L$-function is defined by \begin{align*}
    L(s,\text{Sym}^2f)=\zeta(2s)\sum_{n=1}^\infty\la_f(n^2)n^{-s}.
\end{align*}
There exists a $L$-function $L(s,F)$ associated to some automorphic representation $F$ of GL$(3,\Z)$ such that \begin{align}
    L(s,\text{Sym}^2f)=\sum_{n=1}^\infty A_F(1,n)n^{-s},
\end{align}
where $A_F(m,n)$ are the normalized Fourier coefficients of $F$ satisfying \begin{gather*}
    A_{F}(1, n)=A_{F}(n, 1)=\sum_{m \ell=n} \lambda_{f}\left(m^{2}\right) \\
    A_{F}(m, n)=A_{F}(-m, n)=A_{F}(m,-n)=A_{F}(-m,-n), \\
    A_{F}(m, 0)=A_{F}(0, n)=0,
\end{gather*}
and the Hecke relations \cite[(7.7)]{gl3} \begin{align}\label{HeckeRelations}
    A_{F}(m, n)=\sum_{d \mid(m, n)} \mu(d) A_{F}(m / d, 1) A_{F}(1, n / d).
\end{align}
These relations together with (\ref{DeligneBound}) imply that, \begin{align}\label{SymDeligneBound}
    |A_F(m,n)|\ll (mn)^\varepsilon
\end{align}
for any $m,n$. Moreover, these normalized Fourier coefficient satisfy the following Voronoi summation formula \cite[Thm 1.18]{gl3}.

\begin{lemma}[GL(3) Voronoi Summation]\label{GL3Voronoi}
    Let $\psi\in C_c^\infty(\R^+)$ and let $\tilde{\psi}$ be its Mellin transform, i.e. \begin{align*}
        \tilde{\psi}(s):=\int_0^\infty \psi(y)y^{s-1}dy.
    \end{align*}
    Let $a,c\neq0\in\Z$ such that $(a,c)=1$. Define \begin{align*}
        G_k^\pm(s)=\left(\frac{\Gamma\left(\frac{2+s}{2}\right)}{\Gamma\left(\frac{1-s}{2}\right)}\mp i\frac{\Gamma\left(\frac{1+s}{2}\right)}{\Gamma\left(-\frac{s}{2}\right)}\right)\frac{\Gamma\left(\frac{k+s}{2}\right)\Gamma\left(\frac{k+1+s}{2}\right)}{\Gamma\left(\frac{k-s-1}{2}\right)\Gamma\left(\frac{k-s}{2}\right)}
    \end{align*}
    and for $\sigma>-1$, define \begin{align*}
        \Psi_\pm(x)=\frac{1}{4\pi^{5/2}i}\int_{(\sigma)}(\pi^3x)^{-s}G_k^\pm(s)\tilde{\psi}(-s)ds.
    \end{align*}
    Then we have \begin{align*}
        \sum_{n\geq1} A_{F}(1, n) e\left(\frac{na}{c}\right) \psi(n)=& c \sum_\pm\sum_{n_{1} \mid c} \sum_{n_{2}\geq1} \frac{A_{F}\left(n_{2}, n_{1}\right)}{n_{1} n_{2}} S\left(\overline{a}, \pm n_{2} ; c / n_{1}\right) \Psi_\pm\left(\frac{n_{2} n_{1}^{2}}{c^{3}}\right).
    \end{align*}
\end{lemma}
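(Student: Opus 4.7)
The plan is to prove this by the standard route of Mellin inversion combined with the functional equation of the symmetric square $L$-function twisted by Dirichlet characters modulo $c$. First, since $\psi\in C_c^\infty(\R^+)$, its Mellin transform $\tilde\psi(s)$ is entire and decays rapidly on vertical lines, so I can write $\psi(n)=\frac{1}{2\pi i}\int_{(\sigma_0)}\tilde\psi(-s)n^{s}ds$ for sufficiently large $\sigma_0$, and interchange sum and integral to reduce the left-hand side to $\frac{1}{2\pi i}\int_{(\sigma_0)}\tilde\psi(-s)D(-s,a/c)\,ds$, where $D(s,a/c)=\sum_{n\geq 1}A_F(1,n)e(na/c)n^{-s}$ converges absolutely in $\re(s)>1$ by (\ref{SymDeligneBound}).

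Second, I would open the additive character $e(an/c)$ into multiplicative characters using Gauss sums: for $(n,c)=d$ one writes $e(an/c)$ as a combination of $\chi(n)$ over characters $\chi$ modulo $c/d$, picking up divisor sums that produce exactly the combination $A_F(n_2,n_1)/(n_1n_2)$ with $n_1\mid c$ after invoking the Hecke relation (\ref{HeckeRelations}). This decomposes $D(s,a/c)$ as a finite linear combination of twisted $L$-functions $L(s,F\otimes\chi)$ with $\chi$ primitive modulo a divisor of $c$. Each such $L(s,F\otimes\chi)$ satisfies a functional equation of GL(3)-type relating $s\mapsto 1-s$, with root number a normalized Gauss sum and archimedean factor $\Gamma_\R(s+\mu_j)$-products determined by the archimedean parameters of $\mathrm{Sym}^2 f$, which for holomorphic weight $k$ are precisely $\{-(k-1)/2,\ 1/2,\ (k-1)/2\}$ (up to a standard shift).

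Third, I would shift the line of integration to the far left, say $\re(-s)=-\sigma<-1$, so that the dual $L$-function is absolutely convergent on the new contour. Assembling the completed gamma factors on both sides, the quotient of local archimedean factors produces exactly the two functions $\Gamma(\tfrac{2+s}{2})/\Gamma(\tfrac{1-s}{2})$ and $\Gamma(\tfrac{1+s}{2})/\Gamma(-\tfrac{s}{2})$ from the GL(1) part (whose linear combination with sign $\mp i$ encodes the parity split into $n_2>0$ versus $n_2<0$), multiplied by $\Gamma(\tfrac{k+s}{2})\Gamma(\tfrac{k+1+s}{2})/\Gamma(\tfrac{k-1-s}{2})\Gamma(\tfrac{k-s}{2})$ from the weight-$k$ archimedean factor; this is precisely $G_k^\pm(s)$. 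The Gauss sums attached to the primitive $\chi$'s reassemble, after Hecke-relation collapse, into the Kloosterman sums $S(\bar a,\pm n_2;c/n_1)$ appearing in the statement.

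Finally, recognizing the resulting integral $\frac{1}{2\pi i}\int_{(\sigma)}(\pi^3 x)^{-s}G_k^\pm(s)\tilde\psi(-s)\,ds$ as $4\pi^{5/2}\,\Psi_\pm(x)$ with $x=n_2n_1^2/c^3$, and tracking the overall prefactor, yields the claimed identity. The main obstacle will be the careful bookkeeping in step three: verifying that the archimedean factors from both the GL(3) functional equation and the sign-dependent local epsilon factor at infinity combine precisely to the stated $G_k^\pm(s)$, and that the Hecke-relation/Gauss-sum collapse in step two produces exactly the Kloosterman sums with the divisor structure $n_1\mid c$ rather than a more complicated expression. Convergence of the shifted contour integral is controlled by Stirling applied to $G_k^\pm(s)$ together with the rapid decay of $\tilde\psi$, and absolute convergence of the dual Dirichlet series in $\re(s)>1$ justifies termwise inversion.
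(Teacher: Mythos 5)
The paper does not prove this lemma; it is quoted directly from the literature as \cite[Thm 1.18]{gl3}, so there is no internal argument to compare your sketch against. Evaluating your proposal on its own terms, the high-level plan (Mellin-invert $\psi$, decompose the additive character into Dirichlet characters, apply the functional equation of the twisted symmetric-square $L$-function, shift the contour, and recollect) is the classical route, and it can be pushed through. But there is a genuine gap in your second step. The naive decomposition of $e(an/c)$ into multiplicative characters does not produce a clean finite linear combination of $L(s,F\otimes\chi)$ with $\chi$ primitive modulo divisors of $c$: for $(n,c)=d>1$ the characters mod $c/d$ that appear are generally imprimitive, and the Dirichlet series $\sum_n A_F(1,n)\chi(n)n^{-s}$ for imprimitive $\chi$ is \emph{not} a twisted $L$-function but differs by a product of incomplete Euler factors. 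Untangling these, and then showing that after the functional equation is applied the Gauss sums from the primitive twists reassemble into $S(\overline{a},\pm n_2; c/n_1)$ with precisely the divisor structure $n_1\mid c$ and the coefficient $A_F(n_2,n_1)/(n_1 n_2)$, is in fact the hard part of the proof, not ``careful bookkeeping.'' You assert it happens ``after invoking the Hecke relation,'' but the correct passage from $A_F(1,\cdot)$ to the two-index dual coefficient $A_F(n_2,n_1)$ also uses the $\mathrm{GL}(3)$ functional equation for $A_F(m,n)$ with both indices active, not merely the relations you cite. This is exactly why the modern proofs (Miller--Schmid; Goldfeld--Li) abandon the character-decomposition route for general $c$ and instead argue with automorphic distributions or the full Whittaker expansion.

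A secondary inaccuracy: you describe the $\mp i$ factor and the two pieces $\Psi_\pm$ as encoding a split into $n_2>0$ versus $n_2<0$. In the stated formula both sums run over $n_2\geq 1$; the $\pm$ corresponds to the even/odd (cosine/sine) decomposition of the archimedean integral transform and the sign in the second argument of the Kloosterman sum, not the sign of $n_2$. Your archimedean computation (combining the $\Gamma_{\mathbb{R}}$-type ratios from the $\mathrm{Sym}^2$ parameters into $G_k^\pm(s)$) is plausible in outline and is the part most likely to come out as claimed after careful Stirling bookkeeping. The decisive gap is the arithmetic collapse in step two, which would need a complete Gauss-sum computation handling imprimitive characters and the coprimality stratification of $n$ against $c$ before your sketch could be called a proof.
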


Let us record the convexity bound here. For $0<\sigma<1$, we have \begin{align}\label{Sym2Convexity}
    L\left(\sigma+i t, \mathrm{Sym}^{2} f\right) \ll_{\varepsilon}\left((|t|+1)(|t|+k)^{2}\right)^{\frac{1-\sigma}{2}+\varepsilon}.
\end{align}

\subsection{Rankin-Selberg \texorpdfstring{$L$}{L}-function and Approximate Functional Equation}

Let $f$ and $\rho$ as above, we define the Rankin-Selberg $L$-function \begin{align*}
    L(s,\text{Sym}^2f\otimes\rho)=\sum_{m\geq1}\sum_{n\geq1}A_F(m,n)\la_\rho(n)(m^2n)^{-s}.
\end{align*}
If $\rho$ is holomorphic with weight $\kappa$, its gamma factor is \begin{align*}
    \begin{aligned}
        \gamma\left(s, \operatorname{Sym}^{2} f \otimes \rho\right)=(2 \pi)^{-3 s} \Gamma\left(s+\kappa-\frac{1}{2}\right) \Gamma\left(s+k+\kappa-\frac{3}{2}\right) \Gamma\left(s+\left|k-\kappa-\frac{1}{2}\right|\right).
    \end{aligned}
\end{align*}
Then the complete $L$-function \begin{align*}
    \Lambda\left(s, \operatorname{Sym}^{2} f \otimes \rho\right)=N^{\frac{3s}{2}} \gamma\left(s, \operatorname{Sym}^{2} f \otimes \rho\right) L\left(s, \operatorname{Sym}^{2} f \otimes \rho\right)
\end{align*}
satisfies \begin{align*}
    \Lambda\left(s, \operatorname{Sym}^{2} f \otimes\rho\right)=\varepsilon\left(\operatorname{Sym}^{2} f \otimes \rho\right) \Lambda\left(1-s, \operatorname{Sym}^{2} f \otimes \rho\right)
\end{align*}
for some root number $\varepsilon\left(\operatorname{Sym}^{2} f \otimes \rho\right)$ satisfying $\left|\varepsilon\left(\operatorname{Sym}^{2} f \otimes \rho\right)\right|=1$.

Using the Hecke relations (\ref{HeckeRelations}), we get \begin{align*}
    \begin{aligned}
        L\left(s, \operatorname{Sym}^{2} f \otimes \rho\right)=& \sum_{d\geq1} \mu(d) d^{-3 s} \sum_{m\geq1} A_{F}(m, 1) m^{-2 s} \sum_{n\geq1} A_{F}(1, n) \lambda_{\rho}(d n) n^{-s} \\
        =& L\left(2 s, \operatorname{Sym}^{2} f\right) \sum_{d\geq1} \mu(d) d^{-3 s} \sum_{n\geq1} A_{F}(1, n) \lambda_{\rho}(d n) n^{-s} \\
        =& L\left(2 s, \operatorname{Sym}^{2} f\right) \left(1-\la_\rho(P)P^{-s}\right) \sum_{(d, N)=1} \mu(d) d^{-3 s} \sum_{n\geq1} A_{F}(1, n) \lambda_{\rho}(d n) n^{-s}.
    \end{aligned}
\end{align*}
Define \begin{align*}
    L_N(s,\rho)=\prod_{p|N}\left(1-\la_\rho(p)p^{-s}\right)^{-1},
\end{align*}
together with \cite[Thm 5.3, Prop. 5.4]{IwaniecKowalski}, we have the approximate functional equation \begin{align}\label{AFE}
    \begin{aligned}
        L\left(\frac{1}{2}, \operatorname{Sym}^{2} f \otimes \rho\right) =& \sum_{(d, N)=1} \sum_{n\geq1} \frac{\mu(d) A_{F}(1, n) \la_\rho(d n)}{\sqrt{d^{3} n}} V_\frac{1}{2}\left(\frac{d^{3} n}{N^{\frac{3}{2}}}\right) \\
        &+\varepsilon\left(\operatorname{Sym}^{2} f \otimes g\right) \sum_{(d, N)=1} \sum_{n\geq1} \frac{\mu(d) A_{F}(1, n) \la_\rho(d n)}{\sqrt{d^{3} n}} V_\frac{1}{2}\left(\frac{d^{3} n}{N^{\frac{3}{2}}}\right),
\end{aligned}
\end{align}
where \begin{align*}
    V_\frac{1}{2}(y)=\frac{1}{2 \pi i} \int_{(3)}\left(\cos \left(\frac{\pi u}{4 A}\right)\right)^{-24 A} \frac{\gamma\left(\frac{1}{2}+u, \operatorname{Sym}^{2} f \otimes \rho\right)}{\gamma\left(\frac{1}{2}, \operatorname{Sym}^{2} f \otimes \rho\right)}\frac{L\left(1+2 u, \operatorname{Sym}^{2}f\right)}{L_N\left(\frac{3}{2}+3u,\rho\right)} y^{-u} \frac{d u}{u},
\end{align*}
for any $A>0$. By shifting the contour to $\re(u)=A$ for $y$ large and $\re(u)=-\sigma$ with $0<\sigma<1/2$ for $y$ small, passing through $u=0$ with residue $L(1,\text{Sym}^2 f)L_N(3/2,\rho)$, together with Stirling's approximation (Lemma \ref{Stirling}) and the convexity bound (\ref{Sym2Convexity}), gives us \begin{align*}
    y^jV_\frac{1}{2}^{(j)}(y)\ll_{\varepsilon,j,A} (kN)^\varepsilon\left(1+\frac{y}{k^2}\right)^{-A}
\end{align*}
for any $j\geq0$.

For more details, see \cite[\S 4]{HybridSub}.

\section{Setup: Proof of Lemma \ref{SetupLemma}}\label{SetupSection}

Here we prove Lemma \ref{SetupLemma}, which reduces the subconvexity problem into bounding the sum $S_d(N)$. By the approximate functional equation (\ref{AFE}), we have \begin{align*}
    L\left(\frac{1}{2}, \operatorname{Sym}^{2} f \otimes \rho\right) \ll\sum_{(d, P)=1} \left|\sum_{n} \frac{A_{F}(1, n) \la_\rho(d n)}{\sqrt{d^{3} n}} V_\frac{1}{2}\left(\frac{d^{3} n}{P^{\frac{3}{2}}}\right)\right|,
\end{align*}
with \begin{align*}
    y^jV_\frac{1}{2}^{(j)}(y)\ll_{\varepsilon,j,A} (kP)^\varepsilon\left(1+\frac{y}{k^2}\right)^{-A}
\end{align*}
for any $j,A\geq0$. Applying a smooth dyadic subdivision on the $d^3n$, there exists some fixed $\varphi\in C_c^\infty(\R^+)$ supported on $[1,2]$ such that for any $B>0$, \begin{align*}
    L\left(\frac{1}{2}, \operatorname{Sym}^{2} f \otimes \rho\right) \ll\sup_{\frac{1}{2}\leq N\leq P^{3/2+\varepsilon}k^2}\sum_{(d, P)=1} \left|\sum_{n} \frac{A_{F}(1, n) \la_\rho(d n)}{\sqrt{d^{3} n}} V_\frac{1}{2}\left(\frac{d^{3} n}{P^{\frac{3}{2}}}\right)\varphi\left(\frac{d^3n}{N}\right)\right|+(Pk)^{-B}
\end{align*}
Define \begin{align*}
    V(x)=\frac{1}{\sqrt{x}}V_\frac{1}{2}\left(\frac{Nx}{P^\frac{3}{2}}\right)\varphi(x),
\end{align*}
then the derivative properties of $V_\frac{1}{2}$, support of $\varphi$ together with $P>k^\varepsilon$ implies that $V$ is a $P^\varepsilon$-inert function supported on $[1,2]$ and \begin{align*}
    L\left(\frac{1}{2}, \operatorname{Sym}^{2} f \otimes \rho\right) \ll\sum_{(d, P)=1} \left|\sum_{n} \frac{A_{F}(1, n) \la_\rho(d n)}{\sqrt{N}} V\left(\frac{d^{3} n}{N}\right)\right|+P^{-A}
\end{align*}
for any $A>0$. This concludes the proof of Lemma \ref{SetupLemma}.

\section{Analysis of \texorpdfstring{$S_d(N)$}{Sd(N)}}\label{MainthmSection1}

In the rest of the paper, many of the inequalities have the implicit constant depending on $\varepsilon$. We will no longer specify so with $\ll_\varepsilon$ to avoid heavy notations.

Now we prove Theorem \ref{mainthm} in the following sections. For $d^2P^\varepsilon\leq N\leq P^{3/2+\varepsilon}k^2$, $(d,P)=1$, recall the definition in (\ref{SNsetup}) that \begin{align}
    S_d(N)=\sum_nA(1,n)\la_\rho(dn)V\left(\frac{d^3n}{N}\right),
\end{align}
with $V$ being a $P^\varepsilon$-inert function supported on $[1,2]$.

\subsection{Trivial Bound}

Here we prove the first statement of Theorem \ref{mainthm}. Using (\ref{SymDeligneBound}) and (\ref{DeligneBound}), we get \begin{align*}
    S_d(N)\leq &\sum_n|A(1,n)||\la_\rho(dn)|\left|V\left(\frac{d^3n}{N}\right)\right|\leq\frac{NP^\varepsilon}{d^3}.
\end{align*}

\subsection{Application of the Delta Method}

Now we apply our delta method to separate the oscillations. Take a fixed even function $U\in C_c^\infty(\R)$ and a fixed function $V_0\in C_c^\infty((1/2,5/2))$ such that $V_0(x)=1$ for $1\leq x\leq 2$. Applying Corollary \ref{DeltaCor} with $1\leq C^2\leq \frac{N}{d^2}$, splitting $c=ab$ depending on $(\alpha,c)$, there exists $\mathcal{C}\sim C$ and a fixed smooth function $h$ satisfying $h(x,y)\ll \delta(|x|,|y|\ll1)$ such that \begin{align*}
    S_d(N)=&\frac{1}{\mathcal{C}}\sum_a\sum_b \frac{1}{ab}\sum_m\la_\rho(m)V_0\left(\frac{d^2m}{N}\right)\sum_nA(1,n)V\left(\frac{d^3n}{N}\right)\\
    &\times\sumast_{\alpha\Mod{b}}e\left(\frac{\alpha(dn-m)}{b}\right)h\left(\frac{ab}{C},\frac{dn-m}{abC}\right).
\end{align*}
Splitting up the $b$-sum into whether $P|b$ or not, we get \begin{align}\label{SNDeltaSplit}
    S_d(N)=S_{0}+S_{1},
\end{align}
where for $v=0,1$, \begin{align}\label{SvDef}
    S_v=&\frac{1}{\mathcal{C}}\sum_a\sum_{(P^{1-v},b)=1}\frac{1}{abP^v}\sum_m\la_\rho(m)V_0\left(\frac{d^2m}{N}\right)\sum_nA(1,n)V\left(\frac{d^3n}{N}\right)\sumast_{\alpha\Mod{bP^v}}e\left(\frac{\alpha(dn-m)}{bP^v}\right)h\left(\frac{abP^\nu}{C},\frac{dn-m}{abP^vC}\right).
\end{align}

\subsection{Voronoi Summations}\label{VoronoiSection}

We first apply Voronoi summation to the $m$-sum with Lemma \ref{GL2Voronoi}. Together with a change of variable this yields \begin{align}\label{Msum}
    &\sum_m\la_\rho(m)e\left(-\frac{\alpha m}{bP^v}\right)V_0\left(\frac{d^2m}{N}\right)h\left(\frac{abP^\nu}{C},\frac{dn-m}{abP^vC}\right)\nonumber\\
    =&2\pi i^\kappa\frac{N}{bd^2P^\frac{1+v}{2}}\overline{\chi(-\alpha)}^v\chi(-b)^{1-v}\eta_\rho\sum_m\la_{\overline{\rho}}(m)e\left( \frac{\overline{\alpha P^{1-v}}m}{bP^v}\right)\int_0^\infty V_0(y)h\left(\frac{abP^\nu}{C},\frac{dn-\frac{Ny}{d^2}}{abP^vC}\right)J_{\kappa-1} \left(\frac{4\pi\sqrt{Nmy}}{bdP^\frac{1+v}{2}}\right)dy.
\end{align}

By repeated integration by parts using the derivative properties (2) in Lemma \ref{besselBoundsDerivatives} and bounding the Bessel function by $O(1)$ or (\ref{BesselAsymptotics}), we have for $j\geq0$, \begin{align*}
    &\int_0^\infty V_0(y)h\left(\frac{abP^\nu}{C},\frac{dn-\frac{Ny}{d^2}}{abP^vC}\right)J_{\kappa-1}\left(\frac{4\pi\sqrt{Nmy}}{bdP^\frac{1+v}{2}}\right)dy\\
    =&\int_0^\infty \frac{d^j}{dy^j}\left(V_0(y)y^{-\frac{\kappa+j-1}{2}}h\left(\frac{abP^\nu}{C},\frac{dn-\frac{Ny}{d^2}}{abP^vC}\right)\right)\left(\frac{bdP^\frac{1+v}{2}}{2\pi\sqrt{Nmy}}\right)^jy^\frac{\kappa+j-1}{2}J_{\kappa+j-1}\left(\frac{4\pi\sqrt{Nmy}}{bdP^\frac{1+v}{2}}\right)dy\\
    \ll&_{\kappa,j} \left(\left(1+\frac{N}{abd^2P^vC}\right)\left(\frac{bdP^\frac{1+v}{2}}{2\pi\sqrt{Nmy}}\right)\right)^j.
\end{align*}
Hence by the restriction $abd^2P^vC\leq C^2d^2\leq N$, we get arbitrary saving for the above integral unless $$m\ll\frac{P^{1-v+\varepsilon}N}{a^2C^2d^2}.$$
By another change of variable $dn-\frac{N}{d^2}y=abP^vCt$ together with (\ref{BesselAsymptotics}) and Taylor expansions, we get (\ref{Msum}) is equal to \begin{align*}
    &\frac{aC}{P^\frac{1-v}{2}}\overline{\chi(-\alpha)}^v\chi(-b)^{1-v}\eta_\rho\sum_{m\ll\frac{P^{1-v+\varepsilon}N}{a^2C^2d^2}}\la_{\overline{\rho}}(m)e\left( \frac{\overline{\alpha P^{1-v}}m}{bP^v}\right)\\
    &\times\int_{-\infty}^\infty h\left(\frac{abP^\nu}{C},t\right)V_0\left(\frac{d^2(dn-abP^vCt)}{N}\right)J_{\kappa-1 }\left(\frac{4\pi\sqrt{m(dn-abP^vCt)}}{bP^\frac{1+v}{2}}\right)dt+O\left(P^{-A}\right)\\
    =&\frac{C^3}{ab^2P^\frac{1+3v}{2}}\chi(-1)\overline{\chi(\alpha)}^v\chi(b)^{1-v}\eta_\rho\\
    &\times\sum_{\eta=\pm1}\sum_{m\ll\frac{P^{1-v+\varepsilon}N}{a^2C^2d^2}}\la_{\overline{\rho}}(m)e\left( \frac{\overline{\alpha P^{1-v}}m}{bp^v}\right)e\left(\eta\frac{2\sqrt{dmn}}{bP^\frac{1+v}{2}}\right)\left(1+\frac{(Nm)^\frac{1}{4}}{\sqrt{bd}P^\frac{1+v}{4}}\right)^{-1}W_{v}^\eta\left(m,n,b\right)+O\left(P^{-A}\right),
\end{align*}
for some $P^\varepsilon$-inert function $  W_{v}^{\eta}$.

\begin{remark}
    Here we make use of our conductor lowering function $h$ to enable an easy analysis of the Bessel transform integral. With $h$ truncating the size of the equation to be $cCP^\varepsilon$, a simple change of variable yields the asymptotics. If one were to apply the D-F-I delta method with a conductor lowering function truncating the size of the equation to be $C^2P^\varepsilon$ instead, then obtaining the asymptotics would be a bit more involved.
\end{remark}

Putting this back into $S_{v}$ we get for any $A>0$, \begin{align}
    S_{v}=&\frac{C\chi(-1)\eta_\rho}{\mathcal{C}P^\frac{1+v}{2}}\sum_{\eta=\pm}\sum_a\sum_{\substack{(P^{1-v},b)=1\\abP^\nu\ll C}}\frac{1}{b}\chi(b)^{1-v}\sum_{m\leq\frac{P^{1-v+\varepsilon}N}{a^2C^2d^2}}\la_{\overline{\rho}}(m)\sum_nA(1,n)V\left(\frac{d^3n}{N}\right)\nonumber\\
    &\times\sumast_{\alpha\Mod{bP^v}}\overline{\chi(\alpha)}^ve\left(\frac{\alpha dn+ \overline{\alpha P^{1-v}}m}{bP^v}\right)e\left(\eta\frac{2\sqrt{dmn}}{bP^\frac{1+v}{2}}\right)\left(1+\frac{(Nm)^\frac{1}{4}}{\sqrt{bd}P^\frac{1+v}{4}}\right)^{-1}  W_{v}^{\eta}\left(m,n,b\right)+O\left(P^{-A}\right).
\end{align}
As we get arbitrary saving unless $m\ll\frac{P^{1-v+\varepsilon}N}{a^2C^2d^2}$, we can perform a smooth dyadic subdivision to get for any $A>0$, \begin{align}\label{SuvDyadic}
    S_{v}\ll P^\varepsilon\sup_{\frac{1}{2}\leq M\leq \frac{P^{1-v+\varepsilon}N}{C^2d^2}}\left|S_{v,M}\right|+O\left(P^{-A}\right),
\end{align}
with \begin{align}\label{SvMDef}
    S_{v,M}=&P^{-\frac{1+v}{2}}\sum_{\eta=\pm}\sum_{a\ll\frac{P^{\frac{1-v}{2}+\varepsilon}\sqrt{N}}{\sqrt{M}Cd}}\sum_{\substack{(P^{1-v},b)=1\\abP^\nu\ll C}}\frac{1}{b}\left(1+\frac{(NM)^\frac{1}{4}}{\sqrt{bd}P^\frac{1+v}{4}}\right)^{-1}\chi(b)^{1-v}\sum_m\la_{\overline{\rho}}(m)\varphi\left(\frac{m}{M}\right)\nonumber\\
    &\times\sum_nA(1,n)V\left(\frac{d^3n}{N}\right)\sumast_{\alpha\Mod{bP^v}}\overline{\chi(\alpha)}^ve\left(\frac{\alpha dn+ \overline{\alpha P^{1-v}}m}{bP^v}\right)e\left(\eta\frac{2\sqrt{dmn}}{bP^\frac{1+v}{2}}\right)\tilde{W}_{v}^{\eta}\left(m,n,b\right),
\end{align}
with some fixed $\varphi\in C_c^\infty(\R^+)$ supported on $(1/2,5/2)$ and $$\tilde{W}_{v}^{\eta}\left(m,n,b\right)=\left(1+\frac{(Nm)^\frac{1}{4}}{\sqrt{bd}P^\frac{1+v}{4}}\right)^{-1}\left(1+\frac{(NM)^\frac{1}{4}}{\sqrt{bd}P^\frac{1+v}{4}}\right)  W_{v}^{\eta}\left(m,n,b\right)$$ is $P^\varepsilon$-inert when $m\sim M, n\sim N, b\leq CP^\varepsilon$. Here we used $\mathcal{C}\sim C$.

Next we perform Voronoi summation on the $n$-sum with Lemma \ref{GL3Voronoi}. Writing $d_b=d/(b,d)$ and $b_d=b/(b,d)$, we get \begin{align*}
    &\sum_nA(1,n)e\left(\frac{\alpha dn}{bP^v}+
    \eta\frac{2\sqrt{dmn}}{bP^\frac{1+v}{2}}\right)V\left(\frac{d^3n}{N}\right)\tilde{W}_{v}^{\eta}\left(m,n,b\right)\\
    =&b_dP^v\sum_\pm\sum_{n_1|b_dP^v}\sum_{n}\frac{A(n,n_1)}{n_1n}S\left(\overline{\alpha d_b},\pm n;\frac{b_dP^v}{n_1}\right)\psi^{\pm,\eta}_{v,d}\left(nn_1^2,m,b\right),
\end{align*}
where \begin{align*}
    \psi^{\pm,\eta}_{v,d}\left(nn_1^2,m,b\right)=\frac{1}{2\pi i}\int_{\left(-\frac{1}{2}\right)}\left(\frac{(b_dP^v)^3}{\pi^3nn_1^2}\right)^sG_k^\pm(s)\int_0^\infty V\left(\frac{d^3y}{N}\right)e\left(\eta\frac{2\sqrt{dmy}}{bP^\frac{1+v}{2}}\right)\tilde{W}_{v}^{\eta}\left(m,y,b\right)y^{-s-1}dyds
\end{align*}
with \begin{align*}
    G_k^\pm(s)=\left(\frac{\Gamma\left(\frac{2+s}{2}\right)}{\Gamma\left(\frac{1-s}{2}\right)}\mp i\frac{\Gamma\left(\frac{1+s}{2}\right)}{\Gamma\left(-\frac{s}{2}\right)}\right)\frac{\Gamma\left(\frac{k+s}{2}\right)\Gamma\left(\frac{k+1+s}{2}\right)}{\Gamma\left(\frac{k-s-1}{2}\right)\Gamma\left(\frac{k-s}{2}\right)}.
\end{align*}
By a change of variable we have, \begin{align}\label{PsiDef}
    \psi^{\pm,\eta}_{v,d}\left(nn_1^2,m,b\right)=\frac{1}{2\pi i}\int_{\left(\frac{1}{2}\right)}\left(\frac{(b_ddP^v)^3}{\pi^3Nnn_1^2}\right)^sG_k^\pm(s)\int_0^\infty V\left(y\right)e\left(\eta\frac{2\sqrt{Nmy}}{bdP^\frac{1+v}{2}}\right)\tilde{W}_{v}^{\eta}\left(m,Nd^{-3}y,b\right)y^{-s-1}dyds.
\end{align}
Putting this back into $S$, we get \begin{align}\label{SuvMBeforeIntegral}
    S_{v,M}=&P^{-\frac{1-v}{2}}\sum_\pm\sum_{\eta=\pm1}\sum_{a\ll\frac{P^{\frac{1-v}{2}+\varepsilon}\sqrt{N}}{\sqrt{M}Cd}}\sum_{\substack{(P^{1-v},b)=1\\abP^\nu\ll C}}\frac{1}{(b,d)}\left(1+\frac{(NM)^\frac{1}{4}}{\sqrt{bd}P^\frac{1+v}{4}}\right)^{-1}\chi(b)^{1-v}\nonumber\\
    &\times\sum_m\la_{\overline{\rho}}(m)\varphi\left(\frac{m}{M}\right)\sum_{n_1|b_dP^v}\sum_{n}\frac{A(n,n_1)}{n_1n}\mathcal{C}_{\chi,v,d}^{ }\left(m,\pm n;bP^v,\frac{b_dP^v}{n_1}\right)\psi^{\pm,\eta}_{v,d}\left(nn_1^2,m,b\right),
\end{align}
where the character sum is \begin{align}\label{CharSumDef}
    \mathcal{C}_{\chi,v,d}^{ }\left(m,\pm n;bP^v,\frac{b_dP^v}{n_1}\right)=\sumast_{\alpha\Mod{bP^v}}\overline{\chi(\alpha)}^ve\left( \frac{\overline{\alpha P^{1-v}}m}{bP^v}\right)S\left(\overline{\alpha d_b},\pm n;\frac{b_dP^v}{n_1}\right).
\end{align}
Note that $\mathcal{C}_{\chi,v,d}^{ }\left(w,x;y,z\right)$ is determined by $w\Mod{y}$ and $x\Mod{z}$.

Applying Lemma \ref{HugeIntegralAnalysis} on $\psi$ gives us arbitrary saving if $nn_1^2$ is sufficiently large, say for example $nn_1^2\gg (Pk)^9$. Recalling (\ref{SuvDyadic}) and (\ref{SuvMBeforeIntegral}), performing a smooth dyadic subdivision on $n\sim \tilde{N}$ and dyadic subdivisions on $n_1,b,(b,d)$, we get

\begin{thm}\label{SvAfterVDThm}
    Let $v=0,1$ and let $A,C,d,M,N>0$ such that $N\ll P^{\frac{3}{2}+\varepsilon}k^2$, $M\ll\frac{P^{1-v+\varepsilon}N}{C^2d^2}$, $1\leq C^2\leq \frac{N}{d^2}$. Let $S_{v,M}$ be as in (\ref{SvMDef}), then we have 
    \begin{align}\label{SvMDyadic2}
        S_{v,M}\ll P^\varepsilon\sum_{\eta=\pm1}\sum_\pm\sup_{\frac{1}{2}\leq\tilde{N}\leq (Pk)^9}\sum_{a\ll\frac{P^{\frac{1-v}{2}+\varepsilon}\sqrt{N}}{\sqrt{M}Cd}}\frac{1}{a}\sup_{1\leq B\ll \frac{C}{aP^v}}\sup_{1\leq B_d\ll d}\sup_{1\leq N_1\leq (Pk)^9}\left|S_{v,M,\tilde{N},N_1,B,B_d}^{\pm,\eta}\right|+O\left(P^{-A}\right),
    \end{align}
    where \begin{align}\label{SvMDyadic3}
        S_{v,M,\tilde{N},N_1,B,B_d}^{\pm,\eta}=&\frac{a}{P^\frac{1-v}{2}}\left(1+\frac{(NM)^\frac{1}{4}}{\sqrt{Bd}P^\frac{1+v}{4}}\right)^{-1}\sum_{\substack{B\leq b\leq 2B\\ B_d\leq (b,d)\leq 2B_d\\
        (P^{1-v},b)=1}}\frac{\chi(b)^{1-v}}{(b,d)}\sum_m\la_{\overline{\rho}}(m)\varphi\left(\frac{m}{M}\right)\nonumber\\
        &\times\sum_{\substack{N_1\leq n_1\leq 2N_1\\
        n_1|b_dP^v}}\sum_{n}\frac{A(n,n_1)}{nn_1}\varphi\left(\frac{n}{\tilde{N}}\right)\mathcal{C}_{\chi,v,d}^{ }\left(m,\pm n;bP^v,\frac{b_dP^v}{n_1}\right)\psi^{\pm,\eta}_{v,d}\left(nn_1^2,m,b\right),
    \end{align}
    with $\psi_{v,d}^{\pm,\eta}$ defined in (\ref{PsiDef}) and $\varphi\in C_c^\infty((1/2,5/2))$ is fixed.
\end{thm}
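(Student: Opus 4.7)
The plan is to start from the expression (\ref{SuvMBeforeIntegral}) for $S_{v,M}$, which already incorporates both the GL(2) Voronoi on the $m$-sum and the GL(3) Voronoi on the $n$-sum, and carry out three bookkeeping steps: truncate the new $n$-variable, perform smooth dyadic subdivisions on the remaining ranges, and extract the $a$-sum.

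First I would apply Lemma \ref{HugeIntegralAnalysis} to the integral transform $\psi_{v,d}^{\pm,\eta}(nn_1^2,m,b)$ defined in (\ref{PsiDef}). That lemma provides stationary-phase type estimates which yield arbitrary polynomial decay once $nn_1^2$ grows beyond a polynomial threshold in the other parameters. Using the crude bounds $N\ll P^{3/2+\varepsilon}k^2$, $m\ll M\ll P^{1-v+\varepsilon}N/(C^2d^2)$, $b\ll C$, together with $C^2\leq N/d^2$, the generous cutoff $nn_1^2 \leq (Pk)^9$ is clearly sufficient at the cost of an $O(P^{-A})$ error. This justifies inserting a smooth dyadic partition of unity on $n$ supported on $[\tilde{N}/2,2\tilde{N}]$ with $\tilde{N}\leq (Pk)^9$, losing only $O(\log)=O(P^\varepsilon)$ scales.

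Next I would insert smooth dyadic partitions of unity on $n_1$, $b$, and $(b,d)$, producing parameters $N_1$, $B$, and $B_d$. The bound $N_1\leq (Pk)^9$ is automatic from $n_1^2 \leq n n_1^2$; the bound $B\ll C/(aP^v)$ comes from the constraint $abP^v\ll C$ in the $b$-summation in (\ref{SuvMBeforeIntegral}); and $B_d\ll d$ comes from $(b,d)\mid d$. Each dyadic family contains only $O(P^\varepsilon)$ scales, so replacing the sums over $(\tilde{N},N_1,B,B_d)$ by their suprema is harmless. The weight $\tilde{W}_v^\eta$ remains $P^\varepsilon$-inert under these localizations, and the factor $(1+(NM)^{1/4}/(\sqrt{bd}P^{(1+v)/4}))^{-1}$ is essentially constant once $b\sim B$ and $(b,d)\sim B_d$, matching the corresponding prefactor in (\ref{SvMDyadic3}).

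Finally, to arrive at the displayed form, I would rewrite $P^{-(1-v)/2} = \frac{1}{a}\cdot\frac{a}{P^{(1-v)/2}}$, pull the $a$-sum outside the inner suprema, and use $\sum_{a\ll P^{(1-v)/2+\varepsilon}\sqrt{N}/(\sqrt{M}Cd)} \frac{1}{a}\ll P^\varepsilon$ to absorb the $a$-summation, leaving the factor $a/P^{(1-v)/2}$ inside the definition of $S_{v,M,\tilde{N},N_1,B,B_d}^{\pm,\eta}$. This cosmetic rearrangement is useful because later applications of Cauchy--Schwarz will treat $a$ as a fixed supremum parameter while still benefitting from the $1/a$ savings. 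Each step is a routine truncation or partition of unity, and the only genuine input is the decay statement for $\psi$ beyond the polynomial threshold, which is precisely what Lemma \ref{HugeIntegralAnalysis} is designed to provide; so I expect no serious obstacle once that lemma is in hand.
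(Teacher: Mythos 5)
Your proposal is correct and follows the paper's argument essentially verbatim: apply GL(3) Voronoi to reach (\ref{SuvMBeforeIntegral}), invoke Lemma \ref{HugeIntegralAnalysis} to truncate $nn_1^2\ll (Pk)^9$ with negligible error, insert dyadic partitions of unity on $n,n_1,b,(b,d)$, and reorganize the prefactor as $\frac{1}{a}\cdot\frac{a}{P^{(1-v)/2}}$. The only slight imprecision is the phrase ``absorb the $a$-summation'' --- in (\ref{SvMDyadic2}) the sum $\sum_a\frac{1}{a}$ is deliberately retained (not collapsed to a $\sup_a$ with a $P^\varepsilon$ loss), because the constraint $B\ll C/(aP^v)$ and the factor $a$ inside $S_{v,M,\tilde N,N_1,B,B_d}^{\pm,\eta}$ interact with $a$ nontrivially in the later optimization --- but this is a bookkeeping detail, not a gap.
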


\subsection{Cauchy Schwartz and Poisson Summation}

We are going to treat $S_{v,M,\tilde{N},N_1,B,B_d}^{\pm,\eta}$ in (\ref{SvMDyadic3}) through an application of Cauchy Schwartz and Poisson summation. However, our treatment will vary slightly depending on whether the size of $M$ lies in the "transition range" of the $\psi$ integral, and so we split our treatment into two cases.

\subsubsection{Non-transitional Case}

We will first treat the non-transitional case, i.e. $M\ll \frac{(Bd)^2P^{1+v-\varepsilon}k^2}{N}$ or $M\gg \frac{(Bd)^2P^{1+v+\varepsilon}k^2}{N}$. This contains the generic case by our ultimate choice of parameters.

Applying Cauchy Schwartz inequality and taking out the $n$-sum, together with (\ref{DeligneBound}) we get \begin{align}
    \left(S_{v,M,\tilde{N},N_1,B,B_d}^{\pm,\eta}\right)^2\ll&\frac{a^2P^\varepsilon}{P^{1-v}\tilde{N}}\left(1+\frac{\sqrt{NM}}{BdP^\frac{1+v}{2}}\right)^{-1}\sum_n\varphi\left(\frac{n}{\tilde{N}}\right)\left|\sum_{\substack{B\leq b\leq 2B\\ B_d\leq (b,d)\leq 2B_d\\
    (P^{1-v},b)=1}}\frac{\chi(b)^{1-v}}{(b,d)}\right.\nonumber\\
    &\times \left.\sum_{\substack{N_1\leq n_1\leq 2N_1\\
    n_1|b_dP^v}}\frac{1}{n_1}\sum_m\la_{\overline{\rho}}(m)\varphi\left(\frac{m}{M}\right)\mathcal{C}_{\chi,v,d}^{ }\left(m,\pm n,y;bP^v,\frac{b_dP^v}{n_1}\right)\psi^{\pm,\eta}_{v,d}\left(nn_1^2,m,b\right)\right|^2.
\end{align}
Opening the square, we get \begin{align}\label{S2AfterCauchy}
    \left(S_{v,M,\tilde{N},N_1,B,B_d}^{\pm,\eta}\right)^2\ll&\frac{a^2P^\varepsilon}{P^{1-v}\tilde{N}}\left(1+\frac{\sqrt{NM}}{BdP^\frac{1+v}{2}}\right)^{-1}\sum_n\varphi\left(\frac{n}{\tilde{N}}\right)\sum_{\substack{B\leq b_1\leq 2B\\ B_d\leq (b_1,d)\leq 2B_d\\
    (P^{1-v},b_1)=1}}\sum_{\substack{B\leq b_2\leq 2B\\ B_d\leq (b_2,d)\leq 2B_d\\
    (P^{1-v},b_2)=1}}\frac{\chi(b_1)^{1-v}\overline{\chi(b_2)}^{1-v}}{(b_1,d)(b_2,d)}\nonumber\\
    &\times \sum_{m_1}\sum_{m_2}\la_{\overline{\rho}}(m_1)\overline{\la_{\overline{\rho}}(m_2)}\varphi\left(\frac{m_1}{M}\right)\varphi\left(\frac{m_2}{M}\right)\sum_{\substack{N_1\leq n_1\leq 2N_1\\
    n_1|b_{1,d}P^v}}\sum_{\substack{N_1\leq n_2\leq 2N_1\\
    n_2|b_{2,d}P^v}}\frac{1}{n_1n_2}\mathcal{C}_{\chi,v,d}^{ }\left(m_1,\pm n;b_1P^v,\frac{b_{1,d}P^v}{n_1}\right)\nonumber\\
    &\times \overline{\mathcal{C}_{\chi,v,d}^{ }\left(m_2,\pm n;b_2P^v,\frac{b_{2,d}P^v}{n_2}\right)}\psi^{\pm,\eta}_{v,d}\left(nn_1^2,m_1,b_2\right)\overline{\psi^{\pm,\eta}_{v,d}\left(nn_2^2,m_2,b_2\right)},
\end{align}
with $b_{j,d}=b_j/(b_j,d)$ for $j=1,2$.

Recall the definition of \begin{align*}
    \mathcal{C}_{\chi,v,d}^{ }\left(m_j,\pm n;b_jP^v,\frac{b_{j,d}P^v}{n_j}\right)=&\sumast_{\alpha_j\Mod{bP^v}}\overline{\chi(\alpha_j)}^ve\left( \frac{\overline{\alpha_j P^{1-v}}m_j}{b_jP^v}\right)S\left(\overline{\alpha_j d_{b_j}},\pm n;\frac{b_{j,d}P^v}{n_j}\right)\\
    =&\sumast_{\alpha_j\Mod{bP^v}}\overline{\chi(\alpha_j)}^ve\left( \frac{\overline{\alpha_j P^{1-v}}m_j}{b_jP^v}\right)\sumast_{\beta_j\Mod{\frac{b_{j,d}P^v}{n_j}}}e\left(n_j\frac{\overline{\alpha_jd_{b_j}\beta_j}\pm \beta_j n}{b_{j,d}P^v}\right)
\end{align*}
for $j=1,2$ as in (\ref{CharSumDef}). Performing Poisson summation on the $n$-sum in (\ref{S2AfterCauchy}) gives \begin{align*}
    &\sum_{n}\varphi\left(\frac{n}{\tilde{N}}\right)e\left(\pm \frac{n}{P^v}\left(\frac{n_1\beta_1}{b_{1,d}}-\frac{n_2\beta_2}{b_{2,d}}\right)\right)\psi^{\pm,\eta}_{v,d}\left(nn_1^2,m_1,b_1\right)\psi^{\pm,\eta}_{v,d}\left(nn_2^2,m_2,b_2\right)\\
    =&\sum_{\gamma\Mod{\frac{b_{1,d}b_{2,d}P^{2v}}{n_1n_2}}}e\left(\pm \frac{\gamma(n_1\beta_1b_{2,d}-n_2\beta_2b_{1,d})}{b_{1,d}b_{2,d}P^{2v}}\right)\sum_n\int_0^\infty \varphi\left(\frac{\gamma+\frac{b_{1,d}b_{2,d}P^{2v}}{n_1n_2}y}{\tilde{N}}\right)\\
    &\times\psi^{\pm,\eta}_{v,d}\left(n_1^2\left(\gamma+\frac{b_{1,d}b_{2,d}P^{2v}}{n_1n_2}y\right),m_1,b_1\right)\psi^{\pm,\eta}_{v,d}\left(n_2^2\left(\frac{b_{1,d}b_{2,d}P^{2v}}{n_1n_2}+y\right),m_2,b_2\right)e(-ny)dy\\
    =&\frac{\tilde{N}n_1n_2}{b_{1,d}b_{2,d}P^{2v}}\sum_n\sum_{\gamma\Mod{\frac{b_{1,d}b_{2,d}P^{2v}}{n_1n_2}}}e\left(\frac{n_1n_2n\gamma}{b_{1,d}b_{2,d}P^{2v}}\pm \frac{\gamma(n_1\beta_1b_{2,d}-n_2\beta_2b_{1,d})}{b_{1,d}b_{2,d}P^{2v}}\right)\\
    &\times\int_0^\infty \varphi\left(y\right)\psi^{\pm,\eta}_{v,d}\left(\tilde{N}n_1^2w,m_1,b_1\right)\psi^{\pm,\eta}_{v,d}\left(\tilde{N}n_2^2w,m_2,b_2\right)e\left(-\frac{\tilde{N}n_1n_2nw}{b_{1,d}b_{2,d}P^{2v}}\right)dw\\
    =&\tilde{N}\sum_n\delta\left(n\equiv\mp\beta_1\frac{b_{2,d}P^v}{n_2}\pm\beta_2\frac{b_{1,d}P^v}{n_1}\Mod{\frac{b_{1,d}b_{2,d}P^{2v}}{n_1n_2}}\right)\\
    &\times\int_0^\infty \varphi\left(w\right)\psi^{\pm,\eta}_{v,d}\left(\tilde{N}n_1^2w,m_1,b_1\right)\psi^{\pm,\eta}_{v,d}\left(\tilde{N}n_2^2w,m_2,b_2\right)e\left(-\frac{\tilde{N}n_1n_2nw}{b_{1,d}b_{2,d}P^{2v}}\right)dw.
\end{align*}
We remind the reader that $\overline{\beta_j}$ is the inverse defined mod $\frac{b_{3-j,d}P^v}{n_{3-j}}$ for $j=1,2$.

Putting this back into (\ref{S2AfterCauchy}), we get \begin{align}\label{S2AfterPoisson}
    \left(S_{v,M,\tilde{N},N_1,B,B_d}^{\pm,\eta}\right)^2\ll&\frac{a^2P^\varepsilon}{P^{1-v}}\left(1+\frac{\sqrt{NM}}{BdP^\frac{1+v}{2}}\right)^{-1}\sum_{n}\sum_{\substack{B\leq b_1\leq 2B\\ B_d\leq (b_1,d)\leq 2B_d\\
    (P^{1-v},b_1)=1}}\sum_{\substack{B\leq b_2\leq 2B\\ B_d\leq (b_2,d)\leq 2B_d\\
    (P^{1-v},b_2)=1}}\frac{\chi(b_1)^{1-v}\overline{\chi(b_2)}^{1-v}}{(b_1,d)(b_2,d)}\nonumber\\
    &\times \sum_{m_1}\sum_{m_2}\la_{\overline{\rho}}(m_1)\overline{\la_{\overline{\rho}}(m_2)}\varphi\left(\frac{m_1}{M}\right)\varphi\left(\frac{m_2}{M}\right)\sum_{\substack{N_1\leq n_1\leq 2N_1\\
    n_1|b_{1,d}P^v}}\sum_{\substack{N_1\leq n_2\leq 2N_1\\
    n_2|b_{2,d}P^v}}\frac{1}{n_1n_2}\mathcal{D}\mathcal{J},
\end{align}
where \begin{align}
    \mathcal{D}=&\sumast_{\alpha_1\Mod{b_1P^v}}\overline{\chi(\alpha_1)}^ve\left( \frac{\overline{\alpha_1 P^{1-v}}m_1}{b_1P^v}\right)\sumast_{\beta_1\Mod{\frac{b_{1,d}P^v}{n_1}}}e\left(n_1\frac{\overline{\alpha_1d_{b_1}\beta_1}}{b_{1,d}P^v}\right)\nonumber\\
    &\times \sumast_{\alpha_2\Mod{b_2P^v}}\chi(\alpha_2)^ve\left(- \frac{\overline{\alpha_2 P^{1-v}}m_2}{b_2P^v}\right)\sumast_{\beta_2\Mod{\frac{b_{2,d}P^v}{n_2}}}e\left(-n_2\frac{\overline{\alpha_2d_{b_2}\beta_2}}{b_{2,d}P^v}\right)\nonumber\\
    &\times \delta\left(n\equiv\mp\beta_1\frac{b_{2,d}P^v}{n_2}\pm\beta_2\frac{b_{1,d}P^v}{n_1}\Mod{\frac{b_{1,d}b_{2,d}P^{2v}}{n_1n_2}}\right)
\end{align}
and \begin{align}\label{UltIntegralDef}
    \mathcal{J}=\int_0^\infty \varphi\left(w\right)\psi^{\pm,\eta}_{v,d}\left(\tilde{N}n_1^2w,m_1,b_1\right)\overline{\psi^{\pm,\eta}_{v,d}\left(\tilde{N}n_2^2w,m_2,b_2\right)}e\left(-\frac{\tilde{N}n_1n_2nw}{b_{1,d}b_{2,d}P^{2v}}\right)dw.
\end{align}

\subsubsection{Transitional case}

For the transitional case $$\frac{(Bd)^2P^{1+v-\varepsilon}k^2}{N}\ll M\ll \frac{(Bd)^2P^{1+v+\varepsilon}k^2}{N},$$
we need to analyse $\psi$ more carefully. Recalling (\ref{PsiDef}) and applying Lemma \ref{HugeIntegralAnalysis} on (\ref{SvMDyadic3}) gives us \begin{align*}
    S_{v,M,\tilde{N},N_1,B,B_d}^{\pm,\eta}=&\frac{a}{P^\frac{1-v}{2}}\left(1+\frac{(NM)^\frac{1}{4}}{\sqrt{Bd}P^\frac{1+v}{4}}\right)^{-1}\sum_{\substack{B\leq b\leq 2B\\ B_d\leq (b,d)\leq 2B_d\\
    (P^{1-v},b)=1}}\frac{\chi(b)^{1-v}}{(b,d)}\sum_m\la_{\overline{\rho}}(m)\varphi\left(\frac{m}{M}\right)\nonumber\\
    &\times\sum_{\substack{N_1\leq n_1\leq 2N_1\\
    n_1|b_dP^v}}\sum_{n}\frac{A(n,n_1)}{nn_1}\varphi\left(\frac{n}{\tilde{N}}\right)\mathcal{C}_{\chi,v,d}^{ }\left(m,\pm n;bP^v,\frac{b_dP^v}{n_1}\right)\\
    &\times\left(I_0\left(\eta\frac{2\sqrt{Nm}}{bdP^\frac{1+v}{2}},\frac{(b_ddP^v)^3}{N\tilde{N}n_1^2w}\right)+\sum_{\sigma=\pm1}\sum_{P^{-\varepsilon}\leq \sigma T\leq k^\frac{1}{3}P^{-\varepsilon} \text{ Dyadic}}I_{T}\left(\eta\frac{2\sqrt{Nm}}{bdP^\frac{1+v}{2}},\frac{(b_ddP^v)^3}{N\tilde{N}n_1^2w}\right)\right),
\end{align*}
where \begin{align}
    I_0(x,y)=k^2\int_0^\infty W_{0,x,y}^\pm\left(k^\frac{1}{3}P^{-\varepsilon}\left(t-\frac{k}{|x|}\right)\right)e\left(\mp\frac{xt}{2\pi}\log\left(\frac{|x|y(k^2+x^2t^2)}{8\pi et}\right)\mp\frac{k}{\pi}\arctan\left(\frac{xt}{k}\right)\right)dt
\end{align}
and for $\sigma T> 0$, \begin{align}
    I_{T}(x,y)=&k^\frac{3}{2}\sqrt{|T|}W_{0,x,y,\sigma T}^\pm\left(\frac{4\pi}{|x|^3y}\left(1+\sigma\sqrt{1-\left(\frac{x^2yk}{4\pi}\right)^2}\right)\right)\nonumber\\
    &\times e\left(\pm\frac{2}{x^2y}\left(1+\sigma\sqrt{1-\left(\frac{x^2yk}{4\pi}\right)^2}\right)\mp\frac{k}{\pi}\arctan\left(\frac{4\pi}{x^2yk}\left(1+\sigma\sqrt{1-\left(\frac{x^2yk}{4\pi}\right)^2}\right)\right)\right)
\end{align}
for some functions $W_{0,x,y}^\pm(t)$ supported on $(-1,1)$, $W_{0,x,y,\sigma T}^\pm(t)$ supported on $\left(\frac{k}{|x|}-\frac{1}{T},\frac{k}{|x|}+\frac{1}{T}\right)$, $TP^\varepsilon$-inert in $t$ and $P^\varepsilon$-inert in $x,y$. Applying the Cauchy Schwartz inequality as above but also taking out $\displaystyle\sum_{\sigma=\pm1}$ and the $T$-sum, we get \begin{align*}
    &\left(S_{v,M,\tilde{N},N_1,B,B_d}^{\pm,\eta}\right)^2\nonumber\\
    \ll&\frac{a^2P^\varepsilon}{P^{1-v}\tilde{N}}\left(1+\frac{\sqrt{NM}}{BdP^\frac{1+v}{2}}\right)^{-1}\sum_{\sigma=\pm1}\sum_{\substack{T=0 \text{ or }\\ P^{-\varepsilon}\leq \sigma T\leq k^\frac{1}{3}P^{-\varepsilon} \text{ Dyadic}}}\sum_n\varphi\left(\frac{n}{\tilde{N}}\right)\\
    &\times \left|\sum_{\substack{B\leq b\leq 2B\\ B_d\leq (b,d)\leq 2B_d\\
    (P^{1-v},b)=1}}\frac{\chi(b)^{1-v}}{(b,d)}\sum_{\substack{N_1\leq n_1\leq 2N_1\\
    n_1|b_dP^v}}\frac{1}{n_1}\sum_m\la_{\overline{\rho}}(m)\varphi\left(\frac{m}{M}\right)\mathcal{C}_{\chi,v,d}^{ }\left(m,\pm n,y;bP^v,\frac{b_dP^v}{n_1}\right)I_{T}\right|^2.
\end{align*}
Opening the square, applying Poisson summation and performing the exact same procedure as above yields \begin{align}\label{S2TransMAfterPoisson}
    \left(S_{v,M,\tilde{N},N_1,B,B_d}^{\pm,\eta}\right)^2\ll&\sup_{\sigma=\pm1}\sup_{\substack{T=0 \text{ or }\\ P^{-\varepsilon}\leq T\leq k^\frac{1}{3}P^{-\varepsilon}}}\frac{a^2P^\varepsilon}{P^{1-v}}\left(1+\frac{\sqrt{NM}}{BdP^\frac{1+v}{2}}\right)^{-1}\sum_{n}\sum_{\substack{B\leq b_1\leq 2B\\ B_d\leq (b_1,d)\leq 2B_d\\
    (P^{1-v},b_1)=1}}\sum_{\substack{B\leq b_2\leq 2B\\ B_d\leq (b_2,d)\leq 2B_d\\
    (P^{1-v},b_2)=1}}\frac{\chi(b_1)^{1-v}\overline{\chi(b_2)}^{1-v}}{(b_1,d)(b_2,d)}\nonumber\\
    &\times \sum_{m_1}\sum_{m_2}\la_{\overline{\rho}}(m_1)\overline{\la_{\overline{\rho}}(m_2)}\varphi\left(\frac{m_1}{M}\right)\varphi\left(\frac{m_2}{M}\right)\sum_{\substack{N_1\leq n_1\leq 2N_1\\
    n_1|b_{1,d}P^v}}\sum_{\substack{N_1\leq n_2\leq 2N_1\\
    n_2|b_{2,d}P^v}}\frac{1}{n_1n_2}\mathcal{D}\mathcal{J}_{\sigma T},
\end{align}
with $\mathcal{D}$ as in (\ref{CharSumDef}) and \begin{align}\label{UltIntegralDefTransM}
    \mathcal{J}_{\sigma T}=\int_0^\infty \varphi\left(w\right)I_{\sigma T}\left(\eta\frac{2\sqrt{Nm_1}}{b_1dP^\frac{1+v}{2}},\frac{(b_{1,d}dP^v)^3}{N\tilde{N}n_1^2w}\right)\overline{I_{\sigma T}\left(\eta\frac{2\sqrt{Nm_2}}{b_2dP^\frac{1+v}{2}},\frac{(b_{2,d}dP^v)^3}{N\tilde{N}n_2^2w}\right)}e\left(-\frac{\tilde{N}n_1n_2nw}{b_{1,d}b_{2,d}P^{2v}}\right)dw.
\end{align}

\subsection{Character Sum}

Now we analyse and simplify the character sum $\mathcal{D}$. The treatment will be slightly different depending on whether $\chi=\chi_0$ the trivial nebentypus or not when $v=1$. In the rest of the paper, we will focus on $\chi=\chi_0$ or $v=0$ as the main case, and write out the adjustment needed for nontrivial nebentypus when $v=1$.

For $\chi=\chi_0$ or $v=0$, summing over $\alpha_j$, we get for $j=1,2$, \begin{align*}
    \sumast_{\alpha_j\Mod{b_jP^v}}e\left(\pm \frac{\overline{\alpha_j P^{1-v}}m_j}{b_jP^v}\pm n_j\frac{\overline{\alpha_jd_{b_j}\beta_j}}{b_{j,d}P^v}\right)=&\sumast_{\alpha\Mod{b_jP^v}}e\left(\pm\frac{\alpha( \overline{P^{1-v}}m_j+\overline{d_{b_j}\beta_j}n_j(b_j,d))}{b_jP^v}\right)\\
    =&\sum_{u_{j,1}u_{j,2}=b_jP^v}\mu\left(u_{j,1}\right)\sum_{\alpha\Mod{u_{j,2}}}e\left(\pm\frac{\alpha( \overline{P^{1-v}}m_j+\overline{d_{b_j}\beta_j}n_j(b_j,d))}{u_{j,2}}\right)\\
    =&\sum_{u_{j,1}u_{j,2}=b_jP^v}\mu\left(u_{j,1}\right)u_{j,2}\delta\left(m_j\equiv  \overline{d_{b_j}\beta_j}P^{1-v}n_j(b_j,d)\Mod{u_{j,2}}\right).
\end{align*}
Inserting this into $\mathcal{D}$ and bounding everything by absolute value, we get \begin{align}\label{CharSumBound}
    \mathcal{D}\ll &\sum_{u_1|b_1P^v}\sum_{u_2|b_2P^v}u_1u_2\sumast_{\beta_1\Mod{\frac{b_{1,d}P^v}{n_1}}}\sumast_{\beta_2\Mod{\frac{b_{2,d}P^v}{n_2}}}\nonumber\\
    &\times\delta\left(m_1\equiv  \overline{d_{b_1}\beta_1}P^{1-v}n_1(b_1,d)\Mod{u_1}, m_2\equiv  \overline{d_{b_2}\beta_2}P^{1-v}n_2(b_2,d)\Mod{u_2},\right.\nonumber\\
    &\hspace{1cm} \left.n\equiv\mp\beta_1\frac{b_{2,d}P^v}{n_2}\pm\beta_2\frac{b_{1,d}P^v}{n_1}\Mod{\frac{b_{1,d}b_{2,d}P^{2v}}{n_1n_2}}\right).
\end{align}

For nontrivial nebentypus $\chi\neq\chi_0$ and $v=1$, summing over $\alpha_1$, we get by the evaluation of Gauss sum \begin{align*}
    \sumast_{\alpha_1\Mod{b_1P}}\overline{\chi(\alpha_1)}e\left(\frac{\overline{\alpha_1}m_1}{b_1P}+n_1\frac{\overline{\alpha_1d_{b_1}\beta_1}}{b_{1,d}P}\right)\ll\sqrt{b_1P}\ll\sqrt{BP}.
\end{align*}
Doing the same thing on the sum of $\alpha_2$ and bounding everything by absolute value gives us \begin{align}\label{charsumchibound}
    \mathcal{D}\ll &BP\sumast_{\beta_1\Mod{\frac{b_{1,d}P}{n_1}}}\sumast_{\beta_2\Mod{\frac{b_{2,d}P}{n_2}}}\delta\left(n\equiv\mp\beta_1\frac{b_{2,d}P}{n_2}\pm\beta_2\frac{b_{1,d}P}{n_1}\Mod{\frac{b_{1,d}b_{2,d}P^{2}}{n_1n_2}}\right).
\end{align}

\section{Final Bound: Finishing the Proof of Theorem \ref{mainthm}}\label{MainthmSection2}

For $\chi=\chi_0$ or $v=0$, recalling (\ref{S2AfterPoisson}), bounding everything by absolute value and applying (\ref{DeligneBound}) and (\ref{CharSumBound}) gives us \begin{align}\label{FinalBoundSetup}
    \left(S_{v,M,\tilde{N},N_1,B,B_d}^{\pm,\eta}\right)^2\ll&\frac{a^2P^\varepsilon}{B_d^2P^{1-v}N_1^2}\left(1+\frac{\sqrt{NM}}{BdP^\frac{1+v}{2}}\right)^{-1}\sum_{\substack{B\leq b_1\leq 2B\\ B_d\leq (b_1,d)\leq 2B_d\\
    (P^{1-v},b_1)=1}}\sum_{\substack{B\leq b_2\leq 2B\\ B_d\leq (b_2,d)\leq 2B_d\\
    (P^{1-v},b_2)=1}}\sum_{u_1|b_1P^v}\sum_{u_2|b_2P^v}u_1u_2\nonumber\\
    &\times\sum_{\substack{N_1\leq n_1\leq 2N_1\\
    n_1|b_{1,d}P^v}}\sum_{\substack{N_1\leq n_2\leq 2N_1\\
    n_2|b_{2,d}P^v}}\sumast_{\beta_1\Mod{\frac{b_{1,d}P^v}{n_1}}}\sumast_{\beta_2\Mod{\frac{b_{2,d}P^v}{n_2}}}\sum_{n}\sum_{m_1\sim M}\sum_{m_2\sim M}\delta\left((*)\right)\left|\mathcal{J}\right|,
\end{align}
where $(*)$ is the set of congruences \begin{align}\label{*}
    &m_1\equiv  \overline{d_{b_1}\beta_1}P^{1-v}n_1(b_1,d)\Mod{u_1},\nonumber\\
    &m_2\equiv  \overline{d_{b_2}\beta_2}P^{1-v}n_2(b_2,d)\Mod{u_2},n\equiv\mp\beta_1\frac{b_{2,d}P^v}{n_2}\pm\beta_2\frac{b_{1,d}P^v}{n_1}\Mod{\frac{b_{1,d}b_{2,d}P^{2v}}{n_1n_2}}.
\end{align}
Separating the case $\left|b_1m_1(b_2,d)^3n_2^2-b_2m_2(b_1,d)^3n_1^2\right|=0$ and performing dyadic subdivision for \newline $\left|b_1m_1(b_2,d)^3n_2^2-b_2m_2(b_1,d)^3n_1^2\right|\neq 0$, we get for any $A>0$, \begin{align}\label{SuvAfterCauchy0split}
    \left(S_{v,M,\tilde{N},N_1,B,B_d}^{\pm,\eta}\right)^2\ll&\frac{a^2P^\varepsilon}{B_d^2P^{1-v}N_1^2}\left(1+\frac{\sqrt{NM}}{BdP^\frac{1+v}{2}}\right)^{-1}\sup_{0\leq H\leq BB_d^3MP^\varepsilon}\mathcal{S}_H,
\end{align}
where \begin{align}
    \mathcal{S}_H:=&\mathcal{S}_{u,v,M,\tilde{N},N_1,B,B_d,H}^{\pm,\eta}\nonumber\\
    =&\sum_{\substack{B\leq b_1\leq 2B\\ B_d\leq (b_1,d)\leq 2B_d\\
    (P^{1-v},b_1)=1}}\sum_{\substack{B\leq b_2\leq 2B\\ B_d\leq (b_2,d)\leq 2B_d\\
    (P^{1-v},b_2)=1}}\sum_{u_1|b_1P^v}\sum_{u_2|b_2P^v}u_1u_2\sum_{\substack{N_1\leq n_1\leq 2N_1\\
    n_1|b_{1,d}P^v}}\sum_{\substack{N_1\leq n_2\leq 2N_1\\
    n_2|b_{2,d}P^v}}\sumast_{\beta_1\Mod{\frac{b_{1,d}P^v}{n_1}}}\sumast_{\beta_2\Mod{\frac{b_{2,d}P^v}{n_2}}}\nonumber\\
    &\times\sum_n\sum_{m_1\sim M}\sum_{m_2\sim M}\delta\left(H\leq \left|b_1m_1(b_2,d)^3n_2^2-b_2m_2(b_1,d)^3n_1^2\right|\leq 2H, (*)\right)\left|\mathcal{J}\right|.
\end{align}

Similarly for $\chi\neq\chi_0$ and $v=1$, (\ref{charsumchibound}) gives us \begin{align}\label{SuvchiAfterCauchy0split}
    \left(S_{v,M,\tilde{N},N_1,B,B_d}^{\pm,\eta}\right)^2\ll&\frac{a^2P^\varepsilon}{B_d^2N_1^2}\left(1+\frac{\sqrt{NM}}{BdP}\right)^{-1}\sup_{0\leq H\leq BB_d^3MP^\varepsilon}\mathcal{S}_{\chi,H},
\end{align}
where \begin{align}
    \mathcal{S}_{\chi,H}:=&\mathcal{S}_{u,v,M,\tilde{N},N_1,B,B_d,\chi,H}^{\pm,\eta}\nonumber\\
    =&BP\sum_{\substack{B\leq b_1\leq 2B\\ B_d\leq (b_1,d)\leq 2B_d}}\sum_{\substack{B\leq b_2\leq 2B\\ B_d\leq (b_2,d)\leq 2B_d}}\sum_{\substack{N_1\leq n_1\leq 2N_1\\
    n_1|b_{1,d}P}}\sum_{\substack{N_1\leq n_2\leq 2N_1\\
    n_2|b_{2,d}P}}\sumast_{\beta_1\Mod{\frac{b_{1,d}P}{n_1}}}\sumast_{\beta_2\Mod{\frac{b_{2,d}P}{n_2}}}\nonumber\\
    &\times\sum_n\sum_{m_1\sim M}\sum_{m_2\sim M}\delta\left(H\leq \left|b_1m_1(b_2,d)^3n_2^2-b_2m_2(b_1,d)^3n_1^2\right|\leq 2H, (**)\right)\left|\mathcal{J}\right|
\end{align}
and $(**)$ is the set of congruences \begin{align}\label{**}
    n\equiv\mp\beta_1\frac{b_{2,d}P}{n_2}\pm\beta_2\frac{b_{1,d}P}{n_1}\Mod{\frac{b_{1,d}b_{2,d}P^{2}}{n_1n_2}}.
\end{align}

For the rest of the paper we focus on the case $\chi=\chi_0$ or $v=0$ without explicitly stating, and then we write up the adjustment needed for the case $\chi\neq\chi_0$ and $v=1$ afterwards.

\subsection{First Case}

We first deal with the case of \begin{align*}
    \frac{(Bd)^2P^{1+v+\varepsilon}}{N}\ll M\ll \frac{(Bd)^2P^{1+v-\varepsilon}k^2}{N}.
\end{align*}

For any $A>0$, recalling (\ref{PsiDef}) and (\ref{UltIntegralDef}), we apply Lemma \ref{HugeIntegralAnalysis2} on $\mathcal{J}$ to get the following:

If $\frac{\sqrt{N}\left|b_1m_1(b_2,d)^3n_2^2-b_2m_2(b_1,d)^3n_1^2\right|}{B^2B_d^3dN_1^2\sqrt{M}P^\frac{1+v}{2}}\ll P^\varepsilon$, \begin{align}\label{IntegralConclusionSmallMSmallH}
    \mathcal{J}\ll&\frac{B_d^3N\tilde{N}N_1^2}{(BdP^v)^3}P^\varepsilon\delta\left(\tilde{N}\sim \frac{B^2d^2\sqrt{M}k^2}{B_d^3P^\frac{1-5v}{2}\sqrt{N}N_1^2}, |n|\ll\frac{B_dP^{\frac{1-v}{2}+\varepsilon}\sqrt{N}}{d^2\sqrt{M}k^2}\right)+P^{-A}\nonumber\\
    \ll&\frac{\sqrt{MN}k^2}{BdP^\frac{1+v}{2}}P^\varepsilon\delta\left(|n|\ll\frac{B_dP^{\frac{1-v}{2}+\varepsilon}\sqrt{N}}{d^2\sqrt{M}k^2}\right)+P^{-A};
\end{align}
and if $\frac{\sqrt{N}\left|b_1m_1(b_2,d)^3n_2^2-b_2m_2(b_1,d)^3n_1^2\right|}{B^2B_d^3dN_1^2\sqrt{M}P^\frac{1+v}{2}}\gg P^\varepsilon$, 
\begin{align}\label{IntegralConclusionSmallMLargeH}
    \mathcal{J}\ll&\frac{B_d^3N\tilde{N}N_1^2}{(BdP^v)^3}\left(\frac{\sqrt{N}\left|b_1m_1(b_2,d)^3n_2^2-b_2m_2(b_1,d)^3n_1^2\right|}{B^2B_d^3dN_1^2\sqrt{M}P^\frac{1+v}{2}}\right)^{-\frac{1}{2}}P^\varepsilon\nonumber\\
    &\times \delta\left(\tilde{N}\sim \frac{B^2d^2\sqrt{M}k^2}{B_d^3P^\frac{1-5v}{2}\sqrt{N}N_1^2}, |n|\sim \frac{N\left|b_1m_1(b_2,d)^3n_2^2-b_2m_2(b_1,d)^3n_1^2\right|}{B^2B_d^2d^3N_1^2P^vMk^2}\right)+P^{-A}\nonumber\\
    \ll&\frac{B_d^\frac{3}{2}N_1N^\frac{1}{4}M^\frac{3}{4}k^2}{\sqrt{d}P^{\frac{1+v}{4}}}\left|b_1m_1(b_2,d)^3n_2^2-b_2m_2(b_1,d)^3n_1^2\right|^{-\frac{1}{2}}P^\varepsilon\nonumber\\
    &\times \delta\left(|n|\sim \frac{N\left|b_1m_1(b_2,d)^3n_2^2-b_2m_2(b_1,d)^3n_1^2\right|}{B^2B_d^2d^3N_1^2P^vMk^2}\right)+P^{-A}.
\end{align}

\subsubsection{Sub Case: Diagonal \texorpdfstring{$n=0$}{n=0}}

By (\ref{IntegralConclusionSmallMLargeH}), the sub case $n=0$ only occurs when $H\ll\frac{B^2B_d^3dN_1^2\sqrt{M}P^{\frac{1+v}{2}+\varepsilon}}{\sqrt{N}}$ including $H=0$, and in such ranges of $H$ (\ref{IntegralConclusionSmallMSmallH}) gives us the contribution to $\mathcal{S}_H$ is \begin{align*}
    \mathcal{S}_{H,0}:=&\sum_{\substack{B\leq b_1\leq 2B\\ B_d\leq (b_1,d)\leq 2B_d\\
    (P^{1-v},b_1)=1}}\sum_{\substack{B\leq b_2\leq 2B\\ B_d\leq (b_2,d)\leq 2B_d\\
    (P^{1-v},b_2)=1}}\sum_{u_1|b_1P^v}\sum_{u_2|b_2P^v}u_1u_2\sum_{\substack{N_1\leq n_1\leq 2N_1\\
    n_1|b_{1,d}P^v}}\sum_{\substack{N_1\leq n_2\leq 2N_1\\
    n_2|b_{2,d}P^v}}\sumast_{\beta_1\Mod{\frac{b_{1,d}P^v}{n_1}}}\sumast_{\beta_2\Mod{\frac{b_{2,d}P^v}{n_2}}}\nonumber\\
    &\times\sum_n\sum_{m_1\sim M}\sum_{m_2\sim M}\delta\left(H\leq \left|b_1m_1(b_2,d)^3n_2^2-b_2m_2(b_1,d)^3n_1^2\right|\leq 2H, n=0, (*)\right)\left|\mathcal{J}\right|\\
    \ll& \frac{\sqrt{MN}k^2}{BdP^\frac{1+v}{2}}\sum_{\substack{B\leq b_1\leq 2B\\ B_d\leq (b_1,d)\leq 2B_d\\
    (P^{1-v},b_1)=1}}\sum_{\substack{B\leq b_2\leq 2B\\ B_d\leq (b_2,d)\leq 2B_d\\
    (P^{1-v},b_2)=1}}\sum_{u_1|b_1P^v}\sum_{u_2|b_2P^v}u_1u_2\sum_{\substack{N_1\leq n_1\leq 2N_1\\
    n_1|b_{1,d}P^v}}\sum_{\substack{N_1\leq n_2\leq 2N_1\\
    n_2|b_{2,d}P^v}}\sumast_{\beta_1\Mod{\frac{b_{1,d}P^v}{n_1}}}\sumast_{\beta_2\Mod{\frac{b_{2,d}P^v}{n_2}}}\\
    &\times\sum_n\sum_{m_1\sim M}\sum_{m_2\sim M}\delta\left(H\leq \left|b_1m_1(b_2,d)^3n_2^2-b_2m_2(b_1,d)^3n_1^2\right|\leq 2H, n=0, (*)\right)+P^{-A}
\end{align*}
for any $A>0$. Recall $(*)$ is the set of congruences in (\ref{*}). Together with $n=0$, $(*)$ gives us \begin{align*}
    \beta_1\frac{b_{2,d}P^v}{n_2}\equiv\beta_2\frac{b_{1,d}P^v}{n_1}\Mod{\frac{b_{1,d}b_{2,d}P^{2v}}{n_1n_2}},
\end{align*}
which implies $\frac{b_{1,d}P^v}{n_1}=\frac{b_{2,d}P^v}{n_2}$ and $\beta_1=\beta_2$, since $\beta_1,\beta_2$ are coprime to $\frac{b_{1,d}P^v}{n_1}$ and $\frac{b_{2,d}P^v}{n_2}$ respectively. As a result, given $n_1,n_2,b_1,\beta_1,u_1,u_2$, then $b_2,\beta_2$ are uniquely determined, $m_1$ is determined mod $u_1$ and $m_2$ is determined mod $u_2$ with the restriction \begin{align*}
    &H\leq \left|b_1m_1(b_2,d)^3n_2^2-b_2m_2(b_1,d)^3n_1^2\right|\leq 2H\\
    \Rightarrow&\left|m_2-\frac{b_1m_1(b_2,d)^3n_2^2}{b_2(b_1,d)^3n_1^2}\right|\ll\frac{H}{BB_d^3N_1^2}\Rightarrow\left|m_2-\frac{m_1(b_2,d)^2n_2}{(b_1,d)^2n_1}\right|\ll\frac{H}{BB_d^3N_1^2}.
\end{align*}
This yields \begin{align}\label{SHSmallM0Bound}
     \mathcal{S}_{H,0}\ll& P^\varepsilon\frac{\sqrt{MN}k^2}{BdP^\frac{1+v}{2}}\frac{B}{B_d}\frac{BP^v}{B_dN_1}\sup_{u_1,u_2\ll BP^v}u_1u_2\left(\frac{M}{u_1}+1\right)\left(\min\left\{\frac{H}{u_2BB_d^3N_1^2},\frac{M}{u_2}\right\}+1\right)\nonumber\\
     \ll&P^\varepsilon\frac{B\sqrt{MN}k^2}{B_d^2dN_1P^\frac{1-v}{2}}\left(M+BP^v\right)\left(\frac{Bd\sqrt{M}P^\frac{1+v}{2}}{\sqrt{N}}+BP^v\right).
\end{align}

Similarly for the case $\chi\neq\chi_0, v=1$, we have the contribution of $n=0$ to $\mathcal{S}_{\chi,H}$ is \begin{align*}
    \mathcal{S}_{\chi,H,0}:=&BP\sum_{\substack{B\leq b_1\leq 2B\\ B_d\leq (b_1,d)\leq 2B_d}}\sum_{\substack{B\leq b_2\leq 2B\\ B_d\leq (b_2,d)\leq 2B_d}}\sum_{\substack{N_1\leq n_1\leq 2N_1\\
    n_1|b_{1,d}P}}\sum_{\substack{N_1\leq n_2\leq 2N_1\\
    n_2|b_{2,d}P}}\sumast_{\beta_1\Mod{\frac{b_{1,d}P}{n_1}}}\sumast_{\beta_2\Mod{\frac{b_{2,d}P}{n_2}}}\nonumber\\
    &\times\sum_n\sum_{m_1\sim M}\sum_{m_2\sim M}\delta\left(H\leq \left|b_1m_1(b_2,d)^3n_2^2-b_2m_2(b_1,d)^3n_1^2\right|\leq 2H, n=0, (**)\right)\left|\mathcal{J}\right|\\
    \ll& \frac{\sqrt{MN}k^2}{d}\sum_{\substack{B\leq b_1\leq 2B\\ B_d\leq (b_1,d)\leq 2B_d}}\sum_{\substack{B\leq b_2\leq 2B\\ B_d\leq (b_2,d)\leq 2B_d}}\sum_{\substack{N_1\leq n_1\leq 2N_1\\
    n_1|b_{1,d}P}}\sum_{\substack{N_1\leq n_2\leq 2N_1\\
    n_2|b_{2,d}P}}\sumast_{\beta_1\Mod{\frac{b_{1,d}P}{n_1}}}\sumast_{\beta_2\Mod{\frac{b_{2,d}P}{n_2}}}\\
    &\times\sum_n\sum_{m_1\sim M}\sum_{m_2\sim M}\delta\left(H\leq \left|b_1m_1(b_2,d)^3n_2^2-b_2m_2(b_1,d)^3n_1^2\right|\leq 2H, n=0, (**)\right)+P^{-A}
\end{align*}
for any $A>0$. Same as above, $(**)$ together with $n=0$ gives us $\frac{b_{1,d}P^v}{n_1}=\frac{b_{2,d}P^v}{n_2}$ and $\beta_1=\beta_2$. Hence given $n_1,n_2,b_1,\beta_1, m_1$, then $b_2,\beta_2$ are uniquely determined, $m_2$ is restricted to satisfy \begin{align*}
    &H\leq \left|b_1m_1(b_2,d)^3n_2^2-b_2m_2(b_1,d)^3n_1^2\right|\leq 2H\\
    \Rightarrow&\left|m_2-\frac{b_1m_1(b_2,d)^3n_2^2}{b_2(b_1,d)^3n_1^2}\right|\ll\frac{H}{BB_d^3N_1^2}\Rightarrow\left|m_2-\frac{m_1(b_2,d)^2n_2}{(b_1,d)^2n_1}\right|\ll\frac{H}{BB_d^3N_1^2}.
\end{align*}
This yields \begin{align}\label{SHchiSmallM0Bound}
     \mathcal{S}_{\chi,H,0}\ll& P^\varepsilon\frac{\sqrt{MN}k^2}{d}\frac{B}{B_d}\frac{BP}{B_dN_1}M\left(\min\left\{\frac{H}{BB_d^3N_1^2},M\right\}+1\right)\nonumber\\
     \ll&P^\varepsilon\frac{B^2PM^\frac{3}{2}\sqrt{N}k^2}{B_d^2dN_1}\left(\frac{BPd\sqrt{M}}{\sqrt{N}}+1\right)\ll P^\varepsilon\frac{B^2PM^\frac{3}{2}\sqrt{N}k^2}{B_d^2dN_1}.
\end{align}
Here we used $M\ll\frac{NP^\varepsilon}{C^2d^2}$ and $BP<CP^\varepsilon$ for $v=1$.

\subsubsection{Sub Case: Offdiagonal $n\neq0$}

Let $\mathcal{S}_{H,\neq0}$ be the contribution of $n\neq 0$ to $\mathcal{S}_H$, i.e. \begin{align}
    \mathcal{S}_{H,\neq 0}:=&\sum_{\substack{B\leq b_1\leq 2B\\ B_d\leq (b_1,d)\leq 2B_d\\
    (P^{1-v},b_1)=1}}\sum_{\substack{B\leq b_2\leq 2B\\ B_d\leq (b_2,d)\leq 2B_d\\
    (P^{1-v},b_2)=1}}\sum_{u_1|b_1P^v}\sum_{u_2|b_2P^v}u_1u_2\sum_{\substack{N_1\leq n_1\leq 2N_1\\
    n_1|b_{1,d}P^v}}\sum_{\substack{N_1\leq n_2\leq 2N_1\\
    n_2|b_{2,d}P^v}}\sumast_{\beta_1\Mod{\frac{b_{1,d}P^v}{n_1}}}\sumast_{\beta_2\Mod{\frac{b_{2,d}P^v}{n_2}}}\nonumber\\
    &\times\sum_{n\neq0}\sum_{m_1\sim M}\sum_{m_2\sim M}\delta\left(H\leq \left|b_1m_1(b_2,d)^3n_2^2-b_2m_2(b_1,d)^3n_1^2\right|\leq 2H, (*)\right)\left|\mathcal{J}\right|,
\end{align}
with $(*)$ the set of congruences in (\ref{*}).

For $H\ll\frac{B^2B_d^3dN_1^2\sqrt{M}P^{\frac{1+v}{2}+\varepsilon}}{\sqrt{N}}$ including $H=0$,  (\ref{IntegralConclusionSmallMSmallH}) gives us for any $A>0$, \begin{align*}
    \mathcal{S}_{H,\neq0}\ll& \frac{\sqrt{MN}k^2}{BdP^\frac{1+v}{2}}\sum_{\substack{B\leq b_1\leq 2B\\ B_d\leq (b_1,d)\leq 2B_d\\
    (P^{1-v},b_1)=1}}\sum_{\substack{B\leq b_2\leq 2B\\ B_d\leq (b_2,d)\leq 2B_d\\
    (P^{1-v},b_2)=1}}\sum_{u_1|b_1P^v}\sum_{u_2|b_2P^v}u_1u_2\sum_{\substack{N_1\leq n_1\leq 2N_1\\
    n_1|b_{1,d}P^v}}\sum_{\substack{N_1\leq n_2\leq 2N_1\\
    n_2|b_{2,d}P^v}}\sumast_{\beta_1\Mod{\frac{b_{1,d}P^v}{n_1}}}\sumast_{\beta_2\Mod{\frac{b_{2,d}P^v}{n_2}}}\\
    &\times\sum_{1\leq |n|\ll \frac{B_dP^{\frac{1-v}{2}+\varepsilon}\sqrt{N}}{d^2\sqrt{M}k^2}}\sum_{m_1\sim M}\sum_{m_2\sim M}\delta\left(H\leq \left|b_1m_1(b_2,d)^3n_2^2-b_2m_2(b_1,d)^3n_1^2\right|\leq 2H,(*)\right)+P^{-A}.
\end{align*}
Note that $n\neq0$ gives arbitrary saving unless \begin{align*}
    M\ll\frac{B_d^2P^{1-v+\varepsilon}N}{d^4k^4}.
\end{align*}
Let $c_0=\left(\frac{b_{1,d}P^v}{n_1},\frac{b_{2,d}P^v}{n_2}\right)$, and for $j=1,2$, let $\frac{b_{j,d}P^v}{n_1}=c_0c_{j,0}c_{j,1}$ with $c_{j,0}|c_0^\infty$ and $(c_{j,1},c_0)=1$. Then $(*)$ gives us $c_0|n$ and \begin{align*}
    \frac{n}{c_0}\equiv& \mp\beta_1c_{2,0}c_{2,1}\pm\beta_2c_{1,0}c_{1,1}\Mod{c_0c_{1,0}c_{2,0}}\\
    \beta_1\equiv& \mp\frac{n}{c_0}\overline{c_{2,0}c_{2,1}}\Mod{c_{1,1}}\\
    \beta_2\equiv&\pm\frac{n}{c_0}\overline{c_{1,0}c_{1,1}}\Mod{c_{2,1}}.
\end{align*}
Hence given $c_0$, we have $c_0|n$, $c_{1,0},c_{2,0}|c_0^\infty$, and given $c_0,n,c_{1,0},c_{2,0}$, we have $c_0c_{1,0}|b_1P^v, c_0c_{2,0}|b_2P^v$. Given the above variables, we determine $\beta_1\Mod{c_{1,1}}$ and given $\beta_1$ we determine $\beta_2$ uniquely. Together with $m_1,m_2$ being determined mod $u_1,u_2$ respectively and the restriction \begin{align*}
    &H\leq \left|b_1m_1(b_2,d)^3n_2^2-b_2m_2(b_1,d)^3n_1^2\right|\leq 2H\\
    \Rightarrow&\left|m_2-\frac{b_1m_1(b_2,d)^3n_2^2}{b_2(b_1,d)^3n_1^2}\right|\ll\frac{H}{BB_d^3N_1^2},
\end{align*}
we get for $H\ll\frac{B^2B_d^3dN_1^2\sqrt{M}P^{\frac{1+v}{2}+\varepsilon}}{\sqrt{N}}$, \begin{align}\label{SHSmallMSmallHnon0Bound}
    \mathcal{S}_{H,\neq 0}\ll& P^\varepsilon\frac{\sqrt{MN}k^2}{BdP^\frac{1+v}{2}}\sum_{c_0\ll\frac{BP^v}{B_dN_1}}\frac{B_dP^{\frac{1-v}{2}}\sqrt{N}}{c_0d^2\sqrt{M}k^2}\sup_{c_{1,0},c_{2,0}\ll \frac{BP^v}{c_0B_dN_1}}\min\left\{\frac{B}{B_d},\frac{BP^v}{c_0c_{1,0}}\right\}\min\left\{\frac{B}{B_d},\frac{BP^v}{c_0c_{2,0}}\right\}c_0c_{1,0}\nonumber\\
    &\times \sup_{u_1,u_2\ll BP^v}u_1u_2\left(\frac{M}{u_1}+1\right)\left(\min\left\{\frac{H}{u_2BB_d^3N_1^2},\frac{M}{u_2}\right\}+1\right)\delta\left(M\ll\frac{B_d^2P^{1-v+\varepsilon}N}{d^4k^4}\right)\nonumber\\
    \ll&P^\varepsilon\frac{BN}{d^3}\left(M+BP^v\right)\left(\frac{Bd\sqrt{M}P^\frac{1+v}{2}}{\sqrt{N}}+BP^v\right)\delta\left(M\ll\frac{B_d^2P^{1-v+\varepsilon}N}{d^4k^4}\right)
\end{align}

For the remaining range $\frac{B^2B_d^3dN_1^2\sqrt{M}P^{\frac{1+v}{2}+\varepsilon}}{\sqrt{N}}\ll H\ll BB_d^3N_1^2MP^\varepsilon$, (\ref{IntegralConclusionSmallMLargeH}) gives us for any $A>0$, \begin{align*}
    \mathcal{S}_{H,\neq0}\ll& \frac{B_d^\frac{3}{2}N_1N^\frac{1}{4}M^\frac{3}{4}k^2}{\sqrt{dH}P^{\frac{1+v}{4}}}\sum_{\substack{B\leq b_1\leq 2B\\ B_d\leq (b_1,d)\leq 2B_d\\
    (P^{1-v},b_1)=1}}\sum_{\substack{B\leq b_2\leq 2B\\ B_d\leq (b_2,d)\leq 2B_d\\
    (P^{1-v},b_2)=1}}\sum_{u_1|b_1P^v}\sum_{u_2|b_2P^v}u_1u_2\sum_{\substack{N_1\leq n_1\leq 2N_1\\
    n_1|b_{1,d}P^v}}\sum_{\substack{N_1\leq n_2\leq 2N_1\\
    n_2|b_{2,d}P^v}}\sumast_{\beta_1\Mod{\frac{b_{1,d}P^v}{n_1}}}\\
    &\times\sumast_{\beta_2\Mod{\frac{b_{2,d}P^v}{n_2}}}\sum_{|n|\sim \frac{NH}{B^2B_d^2d^3N_1^2P^vMk^2}}\sum_{m_1\sim M}\sum_{m_2\sim M}\delta\left(H\leq \left|b_1m_1(b_2,d)^3n_2^2-b_2m_2(b_1,d)^3n_1^2\right|\leq 2H,(*)\right)+P^{-A}.
\end{align*}
Performing a similar procedure as above to analyse the congruences, $(*)$ gives us \begin{align}\label{SHSmallMBigHnon0Bound}
    \mathcal{S}_{H,\neq 0}\ll& P^\varepsilon\frac{B_d^\frac{3}{2}N_1N^\frac{1}{4}M^\frac{3}{4}k^2}{\sqrt{dH}P^{\frac{1+v}{4}}}\sum_{c_0\ll\frac{BP^v}{B_dN_1}}\frac{NH}{c_0B^2B_d^2d^3N_1^2P^vMk^2}\sup_{c_{1,0},c_{2,0}\ll \frac{BP^v}{c_0B_dN_1}}\min\left\{\frac{B}{B_d},\frac{BP^v}{c_0c_{1,0}}\right\}\min\left\{\frac{B}{B_d},\frac{BP^v}{c_0c_{2,0}}\right\}c_0c_{1,0}\nonumber\\
    &\times \sup_{u_1,u_2\ll BP^v}u_1u_2\left(\frac{M}{u_1}+1\right)\left(\min\left\{\frac{H}{u_2BB_d^3N_1^2},\frac{M}{u_2}\right\}+1\right)\nonumber\\
    \ll&P^\varepsilon\frac{N^\frac{5}{4}\sqrt{H}}{d^\frac{7}{2}B_d^\frac{3}{2}N_1P^\frac{1+v}{4}M^\frac{1}{4}}\left(M+BP^v\right)^2\ll P^\varepsilon\frac{\sqrt{B}M^\frac{1}{4}N^\frac{5}{4}}{d^\frac{7}{2}P^\frac{1+v}{4}}\left(M+BP^v\right)^2.
\end{align}

Similarly for the case $\chi\neq\chi_0, v=1$, we have the contribution of $n\neq0$ to $\mathcal{S}_{\chi,H}$ being \begin{align}
    \mathcal{S}_{\chi,H,\neq 0}:=&BP\sum_{\substack{B\leq b_1\leq 2B\\ B_d\leq (b_1,d)\leq 2B_d}}\sum_{\substack{B\leq b_2\leq 2B\\ B_d\leq (b_2,d)\leq 2B_d}}\sum_{\substack{N_1\leq n_1\leq 2N_1\\
    n_1|b_{1,d}P}}\sum_{\substack{N_1\leq n_2\leq 2N_1\\
    n_2|b_{2,d}P}}\sumast_{\beta_1\Mod{\frac{b_{1,d}P}{n_1}}}\sumast_{\beta_2\Mod{\frac{b_{2,d}P}{n_2}}}\nonumber\\
    &\times\sum_{n\neq0}\sum_{m_1\sim M}\sum_{m_2\sim M}\delta\left(H\leq \left|b_1m_1(b_2,d)^3n_2^2-b_2m_2(b_1,d)^3n_1^2\right|\leq 2H, (**)\right)\left|\mathcal{J}\right|.
\end{align}
Applying the same analysis with the same definition of $c_0, c_{j,0}, c_{j,1}$ for $j=1,2$, $(**)$ gives us: given $c_0$, we have $c_0|n$, $c_{1,0},c_{2,0}|c_0^\infty$, and given $c_0,n,c_{1,0},c_{2,0}$, we have $c_0c_{1,0}|b_1P^v, c_0c_{2,0}|b_2P^v$. Given the above variables, we determine $\beta_1\Mod{c_{1,1}}$ and given $\beta_1$ we determine $\beta_2$ uniquely. Given $m_1$, $m_2$ satisfies the restriction \begin{align*}
    &H\leq \left|b_1m_1(b_2,d)^3n_2^2-b_2m_2(b_1,d)^3n_1^2\right|\leq 2H\\
    \Rightarrow&\left|m_2-\frac{b_1m_1(b_2,d)^3n_2^2}{b_2(b_1,d)^3n_1^2}\right|\ll\frac{H}{BB_d^3N_1^2},
\end{align*}
we get for $H\ll\frac{B^2B_d^3dN_1^2\sqrt{M}P^{1+\varepsilon}}{\sqrt{N}}$, \begin{align}\label{SHchiSmallMSmallHnon0Bound}
    \mathcal{S}_{\chi,H,\neq 0}\ll& P^\varepsilon\frac{M^\frac{3}{2}\sqrt{N}k^2}{d}\sum_{c_0\ll\frac{BP}{B_dN_1}}\frac{B_d\sqrt{N}}{c_0d^2\sqrt{M}k^2}\sup_{c_{1,0},c_{2,0}\ll \frac{BP}{c_0B_dN_1}}\min\left\{\frac{B}{B_d},\frac{BP}{c_0c_{1,0}}\right\}\min\left\{\frac{B}{B_d},\frac{BP}{c_0c_{2,0}}\right\}c_0c_{1,0}\nonumber\\
    &\times \left(\min\left\{\frac{H}{BB_d^3N_1^2},M\right\}+1\right)\delta\left(M\ll\frac{B_d^2P^\varepsilon N}{d^4k^4}\right)\nonumber\\
    \ll&P^\varepsilon\frac{B^2PMN}{d^3}\delta\left(M\ll\frac{B_d^2P^\varepsilon N}{d^4k^4}\right),
\end{align}
and for $\frac{B^2B_d^3dN_1^2\sqrt{M}P^{1+\varepsilon}}{\sqrt{N}}\ll H\ll BB_d^3N_1^2MP^\varepsilon$, we have \begin{align}\label{SHchiSmallMBigHnon0Bound}
    \mathcal{S}_{\chi,H,\neq 0}\ll& P^\varepsilon\frac{BB_d^\frac{3}{2}N_1N^\frac{1}{4}M^\frac{3}{4}\sqrt{P}k^2}{\sqrt{dH}}\sum_{c_0\ll\frac{BP^v}{B_dN_1}}\frac{NH}{c_0B^2B_d^2d^3N_1^2PMk^2}\nonumber\\
    &\times \sup_{c_{1,0},c_{2,0}\ll \frac{BP^v}{c_0B_dN_1}}\min\left\{\frac{B}{B_d},\frac{BP}{c_0c_{1,0}}\right\}\min\left\{\frac{B}{B_d},\frac{BP}{c_0c_{2,0}}\right\}c_0c_{1,0}M\left(\min\left\{\frac{H}{BB_d^3N_1^2},M\right\}+1\right)\nonumber\\
    \ll&P^\varepsilon\frac{BM^\frac{7}{4}N^\frac{5}{4}\sqrt{HP}}{d^\frac{7}{2}B_d^\frac{3}{2}N_1}\ll P^\varepsilon\frac{B^\frac{3}{2}PM^\frac{9}{4}N^\frac{5}{4}}{d^\frac{7}{2}}.
\end{align}

\subsubsection{Conclusion of the First Case}

Hence inserting (\ref{SHSmallM0Bound}), (\ref{SHSmallMSmallHnon0Bound}) and (\ref{SHSmallMBigHnon0Bound}) from both sub cases into (\ref{SuvAfterCauchy0split}), we get \begin{align*}
    \left(S_{v,M,\tilde{N},N_1,B,B_d}^{\pm,\eta}\right)^2\ll&\frac{a^2BdP^\varepsilon}{B_d^2P^{\frac{1-3v}{2}}\sqrt{MN}N_1^2}\left(M+BP^v\right)\\
    &\times\left(\left(\frac{B\sqrt{MN}k^2}{B_d^2dN_1P^\frac{1-v}{2}}+\frac{BN}{d^3}\delta\left(M\ll\frac{B_d^2P^{1-v+\varepsilon}N}{d^4k^4}\right)\right)\left(\frac{Bd\sqrt{M}P^\frac{1+v}{2}}{\sqrt{N}}+BP^v\right)\right.\\
    &\left.+\frac{\sqrt{B}M^\frac{1}{4}N^\frac{5}{4}}{d^\frac{7}{2}P^\frac{1+v}{4}}\left(M+BP^v\right)\right).
\end{align*}
Putting this into (\ref{SvMDyadic2}) in Theorem \ref{SvAfterVDThm}, we get for \begin{align*}
    1+\frac{(Bd)^2P^{1+v+\varepsilon}}{N}\ll M\ll \min\left\{\frac{(Bd)^2P^{1+v-\varepsilon}k^2}{N},\frac{P^{1-v+\varepsilon}N}{a^2C^2d^2}\right\},
\end{align*}
we have \begin{align*}
    S_{v,M}\ll& P^\varepsilon\sum_a\sup_{B\ll\frac{C}{aP^v}}\frac{\sqrt{Bd}}{P^{\frac{1-3v}{4}}M^{\frac{1}{4}}N^\frac{1}{4}}\sqrt{M+BP^v}\\
    &\times\left(\left(\frac{\sqrt{B}(MN)^\frac{1}{4}k}{\sqrt{d}P^\frac{1-v}{4}}+\sqrt{\frac{BN}{d^3}}\delta\left(M\ll\frac{a^2B_d^2P^{1-v+\varepsilon}N}{d^4k^4}\right)\right)\left(\frac{\sqrt{Bd}M^\frac{1}{4}P^\frac{1+v}{4}}{N^\frac{1}{4}}+\sqrt{BP^v}\right)\right.\\
    &\left.+\sqrt{\frac{\sqrt{B}M^\frac{1}{4}N^\frac{5}{4}}{d^\frac{7}{2}P^\frac{1+v}{4}}\left(M+BP^v\right)}\right)\\
    \ll&P^\varepsilon\left(\frac{\sqrt{Cd}}{P^{\frac{1-v}{4}}N^\frac{1}{4}}\sqrt{C}\left(\sqrt{\frac{CN}{d^3P^v}}\sqrt{C}\left(1+\frac{C^2d^2P^{1+v+\varepsilon}}{N}\right)^{-\frac{1}{4}}+\sqrt{\frac{\sqrt{C}N^\frac{5}{4}}{d^\frac{7}{2}P^\frac{1+3v}{4}}}\sqrt{C}\left(1+\frac{C^2d^2P^{1+v+\varepsilon}}{N}\right)^{-\frac{1}{8}}\right)\right.\\
    &\left.+\frac{Cd}{P^\frac{1-v}{2}\sqrt{N}}\sqrt{\frac{P^{1-v}N}{C^2d^2}+C}\left(\frac{\sqrt{N}k}{dP^\frac{v}{2}}\sqrt{P^{1-v}+C}+\frac{N^\frac{3}{4}}{d^2P^\frac{v}{2}}\sqrt{\frac{P^{1-v}N}{C^2d^2}+C}\right)\right)\\
    \ll& P^\varepsilon\frac{C^2N^\frac{1}{4}}{dP^\frac{1}{4}}\left(1+\frac{C^2d^2P}{N}\right)^{-\frac{1}{4}}\left(1+\frac{N}{C^2d^2P}\right)^\frac{1}{8}+P^\varepsilon\frac{Ck}{\sqrt{P}}\sqrt{\frac{PN}{C^2d^2}+C}\left(\sqrt{P+C}+\frac{N^\frac{1}{4}}{dk}\sqrt{\frac{PN}{C^2d^2}+C}\right).
\end{align*}
By choosing $C\gg\max\left\{\frac{\sqrt{N}}{d\sqrt{P}},P\right\}$, we can simplify the above bound into \begin{align}\label{SuvMSmallMBound}
    S_{v,M}\ll P^\varepsilon\left(\frac{C^\frac{3}{2}\sqrt{N}}{d^\frac{3}{2}\sqrt{P}}+\frac{\sqrt{CN}k}{d}+\frac{\sqrt{P}N^\frac{5}{4}}{Cd^3}+\frac{C^2k}{\sqrt{P}}\right).
\end{align}

Similarly for the case $\chi\neq\chi_0, v=1$, inserting (\ref{SHchiSmallM0Bound}), (\ref{SHchiSmallMSmallHnon0Bound}) and (\ref{SHchiSmallMBigHnon0Bound}) from both sub cases into (\ref{SuvchiAfterCauchy0split}), we get \begin{align}
    S_{1,M}^{\eta}\ll& P^\varepsilon\sum_a\sup_{B\ll\frac{C}{aP}}\frac{\sqrt{BdP}}{M^{\frac{1}{4}}N^\frac{1}{4}}\sqrt{BPM}\left(\frac{\sqrt{B}(MN)^\frac{1}{4}k}{\sqrt{d}}+\sqrt{\frac{BN}{d^3}}\delta\left(M\ll\frac{a^2B_d^2P^\varepsilon N}{d^4k^4}\right)+\sqrt{\frac{\sqrt{B}M^\frac{5}{4}N^\frac{5}{4}}{d^\frac{7}{2}\sqrt{P}}}\right)\nonumber\\
    \ll&P^\varepsilon\left(\frac{\sqrt{CN}k}{d\sqrt{P}}+\frac{N^\frac{5}{4}}{Cd^3\sqrt{P}}\right).
\end{align}
Note that this is dominated by (\ref{SuvMSmallMBound}).

\subsection{Second Case: Very Small \texorpdfstring{$M$}{M}}

For $M\ll \frac{(Bd)^2P^{1+v+\varepsilon}}{N}$, the treatment is very similar. It is simpler and it is dependent of $H$. Recalling (\ref{PsiDef}) and (\ref{UltIntegralDef}), we apply Lemma \ref{HugeIntegralAnalysis2} on $\mathcal{J}$ to get for any $A>0$, \begin{align}\label{IntegralConclusion0M}
    \mathcal{J}\ll \frac{B_d^3N\tilde{N}N_1^2}{(BdP^v)^3}\delta\left(\tilde{N}\ll\frac{(BdP^v)^3k^2P^\varepsilon}{B_d^3NN_1^2}, |n|\ll\frac{B^2P^{2v}}{\tilde{N}B_d^2N_1^2}P^\varepsilon\right)+P^{-A}.
\end{align}

\subsubsection{Sub Case: Diagonal $n=0$}

By (\ref{IntegralConclusion0M}), the contribution of $n=0$ to $\mathcal{S}_H$ is \begin{align*}
    \mathcal{S}_{H,0}=&\sum_{\substack{B\leq b_1\leq 2B\\ B_d\leq (b_1,d)\leq 2B_d\\
    (P^{1-v},b_1)=1}}\sum_{\substack{B\leq b_2\leq 2B\\ B_d\leq (b_2,d)\leq 2B_d\\
    (P^{1-v},b_2)=1}}\sum_{u_1|b_1P^v}\sum_{u_2|b_2P^v}u_1u_2\sum_{\substack{N_1\leq n_1\leq 2N_1\\
    n_1|b_{1,d}P^v}}\sum_{\substack{N_1\leq n_2\leq 2N_1\\
    n_2|b_{2,d}P^v}}\sumast_{\beta_1\Mod{\frac{b_{1,d}P^v}{n_1}}}\sumast_{\beta_2\Mod{\frac{b_{2,d}P^v}{n_2}}}\nonumber\\
    &\times\sum_n\sum_{m_1\sim M}\sum_{m_2\sim M}\delta\left(H\leq \left|b_1m_1(b_2,d)^3n_2^2-b_2m_2(b_1,d)^3n_1^2\right|\leq 2H, n=0, (*)\right)\left|\mathcal{J}\right|\\
    \ll& \frac{B_d^3N\tilde{N}N_1^2}{(BdP^v)^3}\sum_{\substack{B\leq b_1\leq 2B\\ B_d\leq (b_1,d)\leq 2B_d\\
    (P^{1-v},b_1)=1}}\sum_{\substack{B\leq b_2\leq 2B\\ B_d\leq (b_2,d)\leq 2B_d\\
    (P^{1-v},b_2)=1}}\sum_{u_1|b_1P^v}\sum_{u_2|b_2P^v}u_1u_2\sum_{\substack{N_1\leq n_1\leq 2N_1\\
    n_1|b_{1,d}P^v}}\sum_{\substack{N_1\leq n_2\leq 2N_1\\
    n_2|b_{2,d}P^v}}\sumast_{\beta_1\Mod{\frac{b_{1,d}P^v}{n_1}}}\sumast_{\beta_2\Mod{\frac{b_{2,d}P^v}{n_2}}}\\
    &\times\sum_n\sum_{m_1\sim M}\sum_{m_2\sim M}\delta\left(\tilde{N}\ll\frac{(BdP^v)^3k^2P^\varepsilon}{B_d^3NN_1^2}, n=0, (*)\right)+P^{-A}
\end{align*}
for any $A>0$. Recall that $(*)$ together with $n=0$ gives us \begin{align*}
    &m_1\equiv  \overline{d_{b_1}\beta_1}P^{1-v}n_1(b_1,d)\Mod{u_1},\\
    &m_2\equiv  \overline{d_{b_2}\beta_2}P^{1-v}n_2(b_2,d)\Mod{u_2},\beta_1\frac{b_{2,d}P^v}{n_2}\equiv\beta_2\frac{b_{1,d}P^v}{n_1}\Mod{\frac{b_{1,d}b_{2,d}P^{2v}}{n_1n_2}},
\end{align*}
which implies $\frac{b_{1,d}P^v}{n_1}=\frac{b_{2,d}P^v}{n_2}$ and $\beta_1=\beta_2$, since $\beta_1,\beta_2$ are coprime to $\frac{b_{1,d}P^v}{n_1}$ and $\frac{b_{2,d}P^v}{n_2}$ respectively. As a result, given $n_1,n_2,b_1,\beta_1,u_1,u_2$, then $b_2,\beta_2$ are uniquely determined, $m_1$ is determined mod $u_1$ and $m_2$ is determined mod $u_2$. This yields \begin{align}\label{SH0M0Bound}
     \mathcal{S}_{H,0}\ll& P^\varepsilon\frac{B_d^3NN_1^2}{(BdP^v)^3}\frac{(BdP^v)^3k^2}{B_d^3NN_1^2}\frac{B}{B_d}\frac{BP^v}{B_dN_1}\sup_{u_1,u_2\ll BP^v}u_1u_2\left(\frac{M}{u_1}+1\right)\left(\frac{M}{u_2}+1\right)\nonumber\\
     \ll&P^\varepsilon\frac{B^2P^vk^2}{B_d^2N_1}\left(M+BP^v\right)^2.
\end{align}

Similarly for the case $\chi\neq\chi_0, v=1$, we have the contribution of $n=0$ to $\mathcal{S}_{\chi,H}$ is \begin{align}\label{SHchi0M0Bound}
    \mathcal{S}_{\chi,H,0}\ll P^\varepsilon\frac{B^3P^2M^2k^2}{B_d^2N_1}.
\end{align}

\subsubsection{Sub Case: Offdiagonal $n\neq0$}

Similarly, (\ref{IntegralConclusion0M}) gives us the contribution of $n\neq0$ to $\mathcal{S}_H$ is \begin{align*}
    \mathcal{S}_{H,\neq0}=&\sum_{\substack{B\leq b_1\leq 2B\\ B_d\leq (b_1,d)\leq 2B_d\\
    (P^{1-v},b_1)=1}}\sum_{\substack{B\leq b_2\leq 2B\\ B_d\leq (b_2,d)\leq 2B_d\\
    (P^{1-v},b_2)=1}}\sum_{u_1|b_1P^v}\sum_{u_2|b_2P^v}u_1u_2\sum_{\substack{N_1\leq n_1\leq 2N_1\\
    n_1|b_{1,d}P^v}}\sum_{\substack{N_1\leq n_2\leq 2N_1\\
    n_2|b_{2,d}P^v}}\sumast_{\beta_1\Mod{\frac{b_{1,d}P^v}{n_1}}}\sumast_{\beta_2\Mod{\frac{b_{2,d}P^v}{n_2}}}\nonumber\\
    &\times\sum_{n\neq0}\sum_{m_1\sim M}\sum_{m_2\sim M}\delta\left(H\leq \left|b_1m_1(b_2,d)^3n_2^2-b_2m_2(b_1,d)^3n_1^2\right|\leq 2H, (*)\right)\left|\mathcal{J}\right|\\
    \ll& \frac{B_d^3N\tilde{N}N_1^2}{(BdP^v)^3}\sum_{\substack{B\leq b_1\leq 2B\\ B_d\leq (b_1,d)\leq 2B_d\\
    (P^{1-v},b_1)=1}}\sum_{\substack{B\leq b_2\leq 2B\\ B_d\leq (b_2,d)\leq 2B_d\\
    (P^{1-v},b_2)=1}}\sum_{u_1|b_1P^v}\sum_{u_2|b_2P^v}u_1u_2\sum_{\substack{N_1\leq n_1\leq 2N_1\\
    n_1|b_{1,d}P^v}}\sum_{\substack{N_1\leq n_2\leq 2N_1\\
    n_2|b_{2,d}P^v}}\sumast_{\beta_1\Mod{\frac{b_{1,d}P^v}{n_1}}}\sumast_{\beta_2\Mod{\frac{b_{2,d}P^v}{n_2}}}\\
    &\times\sum_{|n|\ll\frac{B^2P^{2v+\varepsilon}}{\tilde{N}B_d^2N_1^2}}\sum_{m_1\sim M}\sum_{m_2\sim M}\delta\left(\tilde{N}\ll\frac{(BdP^v)^3k^2P^\varepsilon}{B_d^3NN_1^2}, (*)\right)+P^{-A}
\end{align*}
Let $c_0=\left(\frac{b_{1,d}P^v}{n_1},\frac{b_{2,d}P^v}{n_2}\right)$, and for $j=1,2$, let $\frac{b_{j,d}P^v}{n_1}=c_0c_{j,0}c_{j,1}$ with $c_{j,0}|c_0^\infty$ and $(c_{j,1},c_0)=1$. Applying the same analysis as done in the main case implies: given $c_0$, we have $c_0|n$, $c_{1,0},c_{2,0}|c_0^\infty$, and given $c_0,n,c_{1,0},c_{2,0}$, we have $c_0c_{1,0}|b_1P^v, c_0c_{2,0}|b_2P^v$. Given the above variables, we determine $\beta_1\Mod{c_{1,1}}$ and given $\beta_1$ we determine $\beta_2$ uniquely. Also, $m_1,m_2$ being determined mod $u_1,u_2$ respectively. Hence we get the bound \begin{align}\label{SH0Mnon0Bound}
    \mathcal{S}_{H,\neq0}\ll&P^\varepsilon\frac{B_d^3N\tilde{N}N_1^2}{(BdP^v)^3}\sum_{c_0\ll\frac{BP^v}{B_dN_1}}\frac{B^2P^{2v+\varepsilon}}{c_0\tilde{N}B_d^2N_1^2}\sup_{c_{1,0},c_{2,0}\ll \frac{BP^v}{c_0B_dN_1}}\min\left\{\frac{B}{B_d},\frac{BP^v}{c_0c_{1,0}}\right\}\min\left\{\frac{B}{B_d},\frac{BP^v}{c_0c_{2,0}}\right\}c_0c_{1,0}\nonumber\\
    &\times \sup_{u_1,u_2\ll BP^v}u_1u_2\left(\frac{M}{u_1}+1\right)\left(\frac{M}{u_2}+1\right)\nonumber\\
    \ll&P^\varepsilon\frac{BN}{d^3}\left(M+BP^v\right)^2.
\end{align}

Similarly for the case $\chi\neq\chi_0, v=1$, we have the contribution of $n\neq0$ to $\mathcal{S}_{\chi,H}$ is \begin{align}
    \mathcal{S}_{\chi,H,\neq0}\ll P^\varepsilon\frac{B^2PM^2N}{d^3}.
\end{align}

\subsubsection{Conclusion of the Second Case}

Hence inserting (\ref{SH0M0Bound}), (\ref{SH0Mnon0Bound}) from both sub cases into (\ref{SuvAfterCauchy0split}), we get \begin{align*}
    \left(S_{v,M,\tilde{N},N_1,B,B_d}^{\pm,\eta}\right)^2\ll&\frac{a^2P^\varepsilon}{B_d^2P^{1-v}N_1^2}\left(M+BP^v\right)^2\left(\frac{B^2P^vk^2}{B_d^2N_1}+\frac{BN}{d^3}\right).
\end{align*}
Putting this into (\ref{SvMDyadic2}) in Theorem \ref{SvAfterVDThm}, we get for \begin{align*}
    \frac{1}{2}\leq M\leq \frac{(Bd)^2P^{1+v+\varepsilon}}{N},
\end{align*}
we have \begin{align*}
    S_{v,M}\ll& P^\varepsilon\sum_a\sup_{B\ll\frac{C}{aP^v}}\frac{1}{B_dP^{\frac{1-v}{2}}N_1}\left(M+BP^v\right)\sqrt{\frac{B^2P^vk^2}{B_d^2N_1}+\frac{BN}{d^3}}\nonumber\\
    \ll&P^\varepsilon\sum_a\sup_{B\ll\frac{C}{aP^v}}\frac{1}{B_dP^{\frac{1-1v}{2}}N_1}\left(\frac{(Bd)^2P^{1+v}}{N}+BP^v\right)\sqrt{\frac{B^2P^vk^2}{B_d^2N_1}+\frac{BN}{d^3}}\nonumber\\
    \ll&P^\varepsilon\frac{C^\frac{3}{2}\sqrt{N}}{d^\frac{3}{2}\sqrt{P}}\left(1+\frac{Cd^2P}{N}\right)\sqrt{1+\frac{Cd^3k^2}{N}}.
\end{align*}
Choosing $C\gg P$ with the restriction $C^2d^2\leq N$, we get \begin{align}\label{SuvM0MBound}
    S_{v,M}\ll P^\varepsilon\left(\frac{C^\frac{3}{2}\sqrt{N}}{d^\frac{3}{2}\sqrt{P}}+\frac{C^2k}{\sqrt{P}}\right).
\end{align}
Note that by the restriction $C^2d^2\leq N$, (\ref{SuvM0MBound}) is dominated by (\ref{SuvMSmallMBound}).

Similarly for the case $\chi\neq\chi_0, v=1$, we have \begin{align}\label{SuvMchi0MBound}
    S_{1,M}^{\eta}\ll& P^\varepsilon\sum_a\sup_{B\ll\frac{C}{aP}}\frac{1}{B_dN_1}\left(\frac{B^\frac{3}{2}PMk}{B_d\sqrt{N_1}}+\frac{B\sqrt{P}M\sqrt{N}}{d^\frac{3}{2}}\right)\nonumber\\
    \ll&P^\varepsilon\left(\frac{C\sqrt{N}}{d^\frac{3}{2}\sqrt{P}}\frac{C^2d^2}{N}+\frac{C^\frac{3}{2}k}{\sqrt{P}}\frac{C^2d^2}{N}\right).
\end{align}
Note that by the restriction $C^2d^2\leq N$, (\ref{SuvMchi0MBound}) is dominated by (\ref{SuvM0MBound}).

\subsection{Third Case: Large \texorpdfstring{$M$}{M}}

For $\frac{(Bd)^2P^{1+v+\varepsilon}k^2}{N}\ll M\ll\frac{P^{1-v+\varepsilon}N}{C^2d^2}$, the treatment is similar to the above two cases. Note that for $M$ to have such size, we have \begin{align*}
    B\ll\frac{\sqrt{MN}}{dP^\frac{1+v}{2}k}\ll\frac{N}{Cd^2P^vk}P^\varepsilon.
\end{align*}
For $A>0$, recalling (\ref{PsiDef}) and (\ref{UltIntegralDef}), we apply Lemma \ref{HugeIntegralAnalysis2} on $\mathcal{J}$ to get the following:

If $\frac{dP^\frac{1+v}{2}k^2}{B_d^3N_1^2\sqrt{N}M^\frac{3}{2}}\left|b_1m_1(b_2,d)^3n_2^2-b_2m_2(b_1,d)^3n_1^2\right|\ll P^\varepsilon$, \begin{align}\label{IntegralConclusionLargeMSmallH}
    \mathcal{J}\ll &\frac{(MN)^\frac{3}{2}}{B^3d^3P^\frac{3+3v}{2}}\delta\left(\tilde{N}\sim \frac{\sqrt{N}M^\frac{3}{2}}{B_d^3N_1^2P^\frac{3-3v}{2}}, \left|n\pm \frac{P(b_1m_1(b_2,d)^3n_2^2-b_2m_2(b_1,d)^3n_1^2)}{(b_1,d)(b_2,d)dm_1m_2n_1n_2}\right|\ll\frac{B^2B_dP^{\frac{3+v}{2}+\varepsilon}}{\sqrt{N}M^\frac{3}{2}}\right)+P^{-A}\nonumber\\
    \ll&\frac{(MN)^\frac{3}{2}}{B^3d^3P^\frac{3+3v}{2}}\delta\left(\tilde{N}\sim \frac{\sqrt{N}M^\frac{3}{2}}{B_d^3N_1^2P^\frac{3-3v}{2}}, |n|\ll\frac{B_dP^{\frac{1-v}{2}+\varepsilon}\sqrt{N}}{d^2\sqrt{M}k^2}\right)+P^{-A},
\end{align}
and if $\frac{dP^\frac{1+v}{2}k^2}{B_d^3N_1^2\sqrt{N}M^\frac{3}{2}}\left|b_1m_1(b_2,d)^3n_2^2-b_2m_2(b_1,d)^3n_1^2\right|\gg P^\varepsilon$, \begin{align}\label{IntegralConclusionLargeMLargeH}
    \mathcal{J}\ll &\frac{B_d^\frac{3}{2}N_1N^\frac{7}{4}M^\frac{9}{4}}{B^3d^\frac{7}{2}P^\frac{7+7v}{4}k}\left|b_1m_1(b_2,d)^3n_2^2-b_2m_2(b_1,d)^3n_1^2\right|^{-\frac{1}{2}}\nonumber\\
    &\times\delta\left(\tilde{N}\sim \frac{\sqrt{N}M^\frac{3}{2}}{B_d^3N_1^2P^\frac{3-3v}{2}}, \left|n\right|\sim\frac{P}{B_d^2dN_1^2M^2}\left|b_1m_1(b_2,d)^3n_2^2-b_2m_2(b_1,d)^3n_1^2\right|\right)+P^{-A}.
\end{align}

\subsubsection{Sub Case: Diagonal $n=0$}

By (\ref{IntegralConclusionLargeMLargeH}), the sub case $n=0$ only occurs when $H\ll\frac{B_d^3N_1^2\sqrt{N}M^\frac{3}{2}}{dP^\frac{1+v}{2}k^2}P^\varepsilon$ including $H=0$, and in such ranges of $H$ (\ref{IntegralConclusionLargeMSmallH}) gives us the contribution to $\mathcal{S}_H$ is \begin{align*}
    \mathcal{S}_{H,0}=&\sum_{\substack{B\leq b_1\leq 2B\\ B_d\leq (b_1,d)\leq 2B_d\\
    (P^{1-v},b_1)=1}}\sum_{\substack{B\leq b_2\leq 2B\\ B_d\leq (b_2,d)\leq 2B_d\\
    (P^{1-v},b_2)=1}}\sum_{u_1|b_1P^v}\sum_{u_2|b_2P^v}u_1u_2\sum_{\substack{N_1\leq n_1\leq 2N_1\\
    n_1|b_{1,d}P^v}}\sum_{\substack{N_1\leq n_2\leq 2N_1\\
    n_2|b_{2,d}P^v}}\sumast_{\beta_1\Mod{\frac{b_{1,d}P^v}{n_1}}}\sumast_{\beta_2\Mod{\frac{b_{2,d}P^v}{n_2}}}\nonumber\\
    &\times\sum_n\sum_{m_1\sim M}\sum_{m_2\sim M}\delta\left(H\leq \left|b_1m_1(b_2,d)^3n_2^2-b_2m_2(b_1,d)^3n_1^2\right|\leq 2H, n=0, (*)\right)\left|\mathcal{J}\right|\\
    \ll& \frac{(MN)^\frac{3}{2}}{B^3d^3P^\frac{3+3v}{2}}\sum_{\substack{B\leq b_1\leq 2B\\ B_d\leq (b_1,d)\leq 2B_d\\
    (P^{1-v},b_1)=1}}\sum_{\substack{B\leq b_2\leq 2B\\ B_d\leq (b_2,d)\leq 2B_d\\
    (P^{1-v},b_2)=1}}\sum_{u_1|b_1P^v}\sum_{u_2|b_2P^v}u_1u_2\sum_{\substack{N_1\leq n_1\leq 2N_1\\
    n_1|b_{1,d}P^v}}\sum_{\substack{N_1\leq n_2\leq 2N_1\\
    n_2|b_{2,d}P^v}}\sumast_{\beta_1\Mod{\frac{b_{1,d}P^v}{n_1}}}\sumast_{\beta_2\Mod{\frac{b_{2,d}P^v}{n_2}}}\\
    &\times\sum_n\sum_{m_1\sim M}\sum_{m_2\sim M}\delta\left(H\leq \left|b_1m_1(b_2,d)^3n_2^2-b_2m_2(b_1,d)^3n_1^2\right|\leq 2H, n=0, (*)\right)+P^{-A}
\end{align*}
for any $A>0$, with $(*)$ the set of congruences in (\ref{*}). Applying the exact same analysis as the $n=0$ sub case in the previous two cases, we get \begin{align}\label{SHLargeM0Bound}
     \mathcal{S}_{H,0}\ll& P^\varepsilon\frac{(MN)^\frac{3}{2}}{B^3d^3P^\frac{3+3v}{2}}\frac{B}{B_d}\frac{BP^v}{B_dN_1}\sup_{u_1,u_2\ll BP^v}u_1u_2\left(\frac{M}{u_1}+1\right)\left(\min\left\{\frac{H}{u_2BB_d^3N_1^2},\frac{M}{u_2}\right\}+1\right)\nonumber\\
     \ll&P^\varepsilon\frac{(MN)^\frac{3}{2}}{BB_d^2d^3N_1P^\frac{3+v}{2}}\left(M+BP^v\right)\left(\frac{M^\frac{3}{2}\sqrt{N}}{BdP^\frac{1+v}{2}k^2}+BP^v\right).
\end{align}

Similarly for the case $\chi\neq\chi_0, v=1$, we have the contribution of $n=0$ to $\mathcal{S}_{\chi,H}$ is \begin{align}\label{SHchiLargeM0Bound}
    \mathcal{S}_{\chi,H,0}\ll P^\varepsilon\frac{M^\frac{5}{2}N^\frac{3}{2}}{B_d^2d^3N_1P}\left(\frac{M^\frac{3}{2}\sqrt{N}}{BdPk^2}+1\right).
\end{align}

\subsubsection{Sub Case: Offdiagonal $n\neq0$}

For $H\ll\frac{B_d^3N_1^2\sqrt{N}M^\frac{3}{2}}{dP^\frac{1+v}{2}k^2}P^\varepsilon$ including $H=0$,  (\ref{IntegralConclusionLargeMSmallH}) gives us for any $A>0$, \begin{align*}
    \mathcal{S}_{H,\neq 0}=&\sum_{\substack{B\leq b_1\leq 2B\\ B_d\leq (b_1,d)\leq 2B_d\\
    (P^{1-v},b_1)=1}}\sum_{\substack{B\leq b_2\leq 2B\\ B_d\leq (b_2,d)\leq 2B_d\\
    (P^{1-v},b_2)=1}}\sum_{u_1|b_1P^v}\sum_{u_2|b_2P^v}u_1u_2\sum_{\substack{N_1\leq n_1\leq 2N_1\\
    n_1|b_{1,d}P^v}}\sum_{\substack{N_1\leq n_2\leq 2N_1\\
    n_2|b_{2,d}P^v}}\sumast_{\beta_1\Mod{\frac{b_{1,d}P^v}{n_1}}}\sumast_{\beta_2\Mod{\frac{b_{2,d}P^v}{n_2}}}\nonumber\\
    &\times\sum_{n\neq0}\sum_{m_1\sim M}\sum_{m_2\sim M}\delta\left(H\leq \left|b_1m_1(b_2,d)^3n_2^2-b_2m_2(b_1,d)^3n_1^2\right|\leq 2H, (*)\right)\left|\mathcal{J}\right|\\
    \ll& \frac{(MN)^\frac{3}{2}}{B^3d^3P^\frac{3+3v}{2}}\sum_{\substack{B\leq b_1\leq 2B\\ B_d\leq (b_1,d)\leq 2B_d\\
    (P^{1-v},b_1)=1}}\sum_{\substack{B\leq b_2\leq 2B\\ B_d\leq (b_2,d)\leq 2B_d\\
    (P^{1-v},b_2)=1}}\sum_{u_1|b_1P^v}\sum_{u_2|b_2P^v}u_1u_2\sum_{\substack{N_1\leq n_1\leq 2N_1\\
    n_1|b_{1,d}P^v}}\sum_{\substack{N_1\leq n_2\leq 2N_1\\
    n_2|b_{2,d}P^v}}\sumast_{\beta_1\Mod{\frac{b_{1,d}P^v}{n_1}}}\sumast_{\beta_2\Mod{\frac{b_{2,d}P^v}{n_2}}}\\
    &\times\sum_{1\leq |n|\ll\frac{B_dP^{\frac{1-v}{2}+\varepsilon}\sqrt{N}}{d^2\sqrt{M}k^2}}\sum_{m_1\sim M}\sum_{m_2\sim M}\delta\left(H\leq \left|b_1m_1(b_2,d)^3n_2^2-b_2m_2(b_1,d)^3n_1^2\right|\leq 2H,(*)\right)+P^{-A}.
\end{align*}
Note that $n\neq0$ gives arbitrary saving unless \begin{align*}
    M\ll\frac{B_d^2P^{1-v}N}{d^4k^4}P^\varepsilon.
\end{align*}
This together with $M\gg \frac{(Bd)^2P^{1+v+\varepsilon}k^2}{N}$ implies \begin{align*}
    B\ll \frac{B_dN}{d^3P^vk^3}P^\varepsilon.
\end{align*}
Applying the exact same analysis as the $n\neq0$ sub case in the first case, we get for $H\ll\frac{B_d^3N_1^2\sqrt{N}M^\frac{3}{2}}{dP^\frac{1+v}{2}k^2}P^\varepsilon$, \begin{align}\label{SHLargeMSmallHnon0Bound}
    \mathcal{S}_{H,\neq 0}\ll& P^\varepsilon\frac{(MN)^\frac{3}{2}}{B^3d^3P^\frac{3+3v}{2}}\sum_{c_0\ll\frac{BP^v}{B_dN_1}}\frac{B_dP^{\frac{1-v}{2}}\sqrt{N}}{c_0d^2\sqrt{M}k^2}\sup_{c_{1,0},c_{2,0}\ll \frac{BP^v}{c_0B_dN_1}}\min\left\{\frac{B}{B_d},\frac{BP^v}{c_0c_{1,0}}\right\}\min\left\{\frac{B}{B_d},\frac{BP^v}{c_0c_{2,0}}\right\}c_0c_{1,0}\nonumber\\
    &\times \sup_{u_1,u_2\ll BP^v}u_1u_2\left(\frac{M}{u_1}+1\right)\left(\min\left\{\frac{H}{u_2BB_d^3N_1^2},\frac{M}{u_2}\right\}+1\right)\delta\left(M\ll\frac{B_d^2P^{1-v}N}{d^4k^4}P^\varepsilon, B\ll \frac{B_dN}{d^3P^vk^3}\right)+P^{-A}\nonumber\\
    \ll&P^\varepsilon\frac{MN^2}{Bd^5P^{1+v}k^2}\left(M+BP^v\right)\left(\frac{M^\frac{3}{2}\sqrt{N}}{BdP^\frac{1+v}{2}k^2}+BP^v\right)\delta\left(M\ll\frac{B_d^2P^{1-v}N}{d^4k^4}P^\varepsilon, B\ll \frac{B_dN}{d^3P^vk^3}P^\varepsilon\right)+P^{-A}.
\end{align}

For $\frac{B_d^3N_1^2\sqrt{N}M^\frac{3}{2}}{dP^\frac{1+v}{2}k^2}P^\varepsilon\ll H\ll BB_d^3N_1^2MP^\varepsilon$, (\ref{IntegralConclusionLargeMLargeH}) gives us for any $A>0$, \begin{align*}
    \mathcal{S}_{H,\neq0}\ll&P^\varepsilon\frac{B_d^\frac{3}{2}N_1N^\frac{7}{4}M^\frac{9}{4}}{B^3d^\frac{7}{2}P^\frac{7+7v}{4}k\sqrt{H}}\sum_{\substack{B\leq b_1\leq 2B\\ B_d\leq (b_1,d)\leq 2B_d\\
    (P^{1-v},b_1)=1}}\sum_{\substack{B\leq b_2\leq 2B\\ B_d\leq (b_2,d)\leq 2B_d\\
    (P^{1-v},b_2)=1}}\sum_{u_1|b_1P^v}\sum_{u_2|b_2P^v}u_1u_2\sum_{\substack{N_1\leq n_1\leq 2N_1\\
    n_1|b_{1,d}P^v}}\sum_{\substack{N_1\leq n_2\leq 2N_1\\
    n_2|b_{2,d}P^v}}\sumast_{\beta_1\Mod{\frac{b_{1,d}P^v}{n_1}}}\\
    &\times\sumast_{\beta_2\Mod{\frac{b_{2,d}P^v}{n_2}}}\sum_{|n|\sim\frac{PH}{B_d^2dN_1^2M^2}}\sum_{m_1\sim M}\sum_{m_2\sim M}\delta\left(H\leq \left|b_1m_1(b_2,d)^3n_2^2-b_2m_2(b_1,d)^3n_1^2\right|\leq 2H,(*)\right)+P^{-A}.
\end{align*}
Applying the same analysis as above or the first case, we get \begin{align}\label{SHLargeMBigHnon0Bound}
    \mathcal{S}_{H,\neq 0}\ll& P^\varepsilon\frac{B_d^\frac{3}{2}N_1N^\frac{7}{4}M^\frac{9}{4}}{B^3d^\frac{7}{2}P^\frac{7+7v}{4}k\sqrt{H}}\sum_{c_0\ll\frac{BP^v}{B_dN_1}}\frac{PH}{c_0B_d^2dN_1^2M^2}\sup_{c_{1,0},c_{2,0}\ll \frac{BP^v}{c_0B_dN_1}}\min\left\{\frac{B}{B_d},\frac{BP^v}{c_0c_{1,0}}\right\}\min\left\{\frac{B}{B_d},\frac{BP^v}{c_0c_{2,0}}\right\}c_0c_{1,0}\nonumber\\
    &\times \sup_{u_1,u_2\ll BP^v}u_1u_2\left(\frac{M}{u_1}+1\right)\left(\min\left\{\frac{H}{u_2BB_d^3N_1^2},\frac{M}{u_2}\right\}+1\right)\nonumber\\
    \ll&P^\varepsilon\frac{M^\frac{1}{4}N^\frac{7}{4}\sqrt{H}}{BB_d^\frac{3}{2}d^\frac{9}{2}N_1P^\frac{3+3v}{4}k}\left(M+BP^v\right)^2\ll P^\varepsilon\frac{M^\frac{3}{4}N^\frac{7}{4}}{\sqrt{B}d^\frac{9}{2}P^\frac{3+3v}{4}k}\left(M+BP^v\right)^2.
\end{align}

Similarly for the case $\chi\neq\chi_0, v=1$, we have the contribution of $n\neq 0$ to $\mathcal{S}_{\chi,H}$ is \begin{align}
    \mathcal{S}_{\chi,H,\neq 0}\ll P^\varepsilon\frac{M^2N^2}{d^5Pk^2}\left(\frac{M^\frac{3}{2}\sqrt{N}}{BdPk^2}+1\right)\delta\left(M\ll\frac{B_d^2N}{d^4k^4}P^\varepsilon, B\ll \frac{B_dN}{d^3Pk^3}P^\varepsilon\right)+P^{-A}
\end{align}
for $H\ll\frac{B_d^3N_1^2\sqrt{N}M^\frac{3}{2}}{dPk^2}P^\varepsilon$, and \begin{align}
    \mathcal{S}_{\chi,H,\neq 0}\ll P^\varepsilon\frac{\sqrt{B}M^\frac{11}{4}N^\frac{7}{4}}{d^\frac{9}{2}\sqrt{P}k}
\end{align}
for $\frac{B_d^3N_1^2\sqrt{N}M^\frac{3}{2}}{dPk^2}P^\varepsilon\ll H\ll BB_d^3N_1^2MP^\varepsilon$.

\subsubsection{Conclusion of the Third Case}

Inserting (\ref{SHLargeM0Bound}), (\ref{SHLargeMSmallHnon0Bound}) and (\ref{SHLargeMBigHnon0Bound}) into (\ref{SuvAfterCauchy0split}), we get \begin{align*}
    \left(S_{v,M,\tilde{N},N_1,B,B_d}^{\pm,\eta}\right)^2\ll&\frac{a^4BdP^\varepsilon}{B_d^2P^{\frac{1-3v}{2}}\sqrt{MN}N_1^2}\left(M+BP^v\right)\\
    &\times\left(\left(\frac{(MN)^\frac{3}{2}}{BB_d^2d^3N_1P^\frac{3+v}{2}}+\frac{MN^2}{Bd^5P^{1+v}k^2}\delta\left(M\ll\frac{B_d^2P^{1-v}N}{d^4k^4}P^\varepsilon, B\ll \frac{B_dN}{d^3P^vk^3}P^\varepsilon\right)\right)\left(\frac{M^\frac{3}{2}\sqrt{N}}{BdP^\frac{1+v}{2}k^2}+BP^v\right)\right.\\
    &\left.+\frac{M^\frac{3}{4}N^\frac{7}{4}}{\sqrt{B}d^\frac{9}{2}P^\frac{3+3v}{4}k}\left(M+BP^v\right)\right).
\end{align*}
Inserting the restriction $$\frac{(Bd)^2P^{1+v+\varepsilon}k^2}{N}\ll M\ll\frac{P^{1-v-\varepsilon}N}{a^2C^2d^2} \text{ and } B\ll\frac{\sqrt{MN}}{dP^\frac{1+v}{2}k}P^\varepsilon\ll\frac{N}{Cd^2P^vk}P^\varepsilon,$$
into the above bound and (\ref{SvMDyadic2}) in Theorem \ref{SvAfterVDThm}, we get \begin{align*}
    S_{v,M}\ll& P^\varepsilon\sup_{B\ll\frac{\sqrt{MN}}{dP^\frac{1+v}{2}k}P^\varepsilon}\frac{\sqrt{Bd}}{B_dP^{\frac{1-3v}{4}}M^{\frac{1}{4}}N^\frac{1}{4}}\sqrt{M+BP^v}\\
    &\times\left(\left(\frac{(MN)^\frac{3}{4}}{\sqrt{B}d^\frac{3}{2}P^\frac{3+v}{4}}+\sqrt{\frac{MN^2}{Bd^5P^{1+v}k^2}}\delta\left(M\ll\frac{B_d^2P^{1-v}N}{d^4k^4}P^\varepsilon, B\ll \frac{B_dN}{d^3P^vk^3}P^\varepsilon\right)\right)\left(\frac{M^\frac{3}{4}N^\frac{1}{4}}{\sqrt{Bd}P^\frac{1+v}{4}k}+\sqrt{BP^v}\right)\right.\\
    &\left.+\sqrt{\frac{M^\frac{3}{4}N^\frac{7}{4}}{\sqrt{B}d^\frac{9}{2}P^\frac{3+3v}{4}k}\left(M+BP^v\right)}\right)\\
    \ll&P^\varepsilon \left(\frac{M^\frac{7}{4}N^\frac{3}{4}}{d^\frac{3}{2}P^\frac{5-v}{4}k}+\frac{M^\frac{5}{4}N^\frac{3}{4}}{d^\frac{3}{2}P^\frac{5-3v}{4}k}+\frac{(MN)^\frac{3}{4}}{d^\frac{3}{2}P^\frac{5-3v}{4}\sqrt{k}}\sqrt{M+\frac{\sqrt{MN}}{dP^\frac{1-v}{2}k}}+\frac{M^\frac{1}{4}N^\frac{3}{4}}{d^2P^\frac{3-v}{4}k^\frac{3}{4}}\left(M+\frac{\sqrt{MN}}{dP^\frac{1-v}{2}k}\right)\right)\\
    \ll&P^\varepsilon\left(\frac{\sqrt{P}N^\frac{5}{2}}{C^\frac{7}{2}d^5k}+\frac{N^2}{C^\frac{5}{2}d^4\sqrt{k}}\sqrt{1+\frac{C}{Pk}}+\frac{\sqrt{P}N^2}{C^\frac{5}{2}d^\frac{9}{2}k^\frac{3}{4}}\left(1+\frac{C}{Pk}\right)+\frac{N^2}{d^\frac{9}{2}\sqrt{P}k^2(C^\frac{3}{2}+k^3)}\right).
\end{align*}

\begin{remark}
To get the second inequality, the first three terms come from choosing $\frac{(MN)^\frac{3}{4}k}{\sqrt{B}d^\frac{3}{2}P^\frac{3+v}{4}}$, and the last term comes from choosing $\sqrt{\frac{M^\frac{3}{4}N^\frac{7}{4}}{\sqrt{B}d^\frac{9}{2}P^\frac{3+3v}{4}k}\left(M+BP^v\right)}$. Note that no more terms are needed as $\sqrt{\frac{MN^2}{Bd^5P^{1+v}k^2}}$ is dominated by other terms when $M\gg \frac{B_d^2P^{1-v}N}{d^4k^4}P^\varepsilon$.
\end{remark}

By the restriction $C\leq\sqrt{\frac{N}{d^3}}\leq P^\frac{3}{4}k\leq Pk$ and restricting $k<P^2$, the above bound simplies to \begin{align}\label{SuvMlargeMBound}
    S_{v,M}\ll&P^\varepsilon\left(\frac{\sqrt{P}N^\frac{5}{2}}{C^\frac{7}{2}d^5k}+\frac{\sqrt{P}N^2}{C^\frac{5}{2}d^4k^\frac{3}{4}}+\frac{N^2}{d^\frac{9}{2}\sqrt{P}k^2(C^\frac{3}{2}+k^3)}\right).
\end{align}

Similarly for the case $\chi\neq\chi_0, v=1$, we have \begin{align}
    S_{1,M}^{\eta}\ll& P^\varepsilon\sup_{B\ll\frac{\sqrt{MN}}{dPk}P^\varepsilon}\frac{\sqrt{BdP}}{B_dM^{\frac{1}{4}}N^\frac{1}{4}}\nonumber\\
    &\times \left(\left(\frac{M^\frac{3}{4}N^\frac{1}{4}}{\sqrt{BdP}k}+1\right)\left(\frac{M^\frac{5}{4}N^\frac{3}{4}}{B_dd^\frac{3}{2}\sqrt{N_1P}}+\frac{B\sqrt{PMN}}{d^\frac{3}{2}}\delta\left(M\ll\frac{B^\frac{4}{3}B_d^\frac{2}{3}P^\frac{4}{3}}{N^\frac{1}{3}}P^\varepsilon, B\ll \frac{B_dN}{d^3Pk^3}\right)\right)+\sqrt{\frac{\sqrt{B}M^\frac{11}{4}N^\frac{7}{4}}{d^\frac{9}{2}\sqrt{P}k}}\right)\nonumber\\
    \ll& P^\varepsilon\left(\frac{N^\frac{5}{2}}{C^\frac{7}{2}d^5\sqrt{P}}+\frac{N^2}{C^\frac{5}{2}d^4\sqrt{Pk}}+\frac{\sqrt{P}N^\frac{5}{2}}{C^\frac{7}{2}d^\frac{11}{2}k}\frac{\sqrt{C}}{Pk^\frac{1}{4}}\right).
\end{align}
Note that this is dominated by (\ref{SuvMlargeMBound}) as we have the restriction $k<P^\frac{5}{4}$ giving us $C\leq\frac{\sqrt{N}}{d}\ll P^\frac{3}{4}k\leq P^2\sqrt{k}$.

\subsection{Fourth Case: Transitional \texorpdfstring{$M$}{M}}

Finally we deal with the transitional range $\frac{(Bd)^2P^{1+v-\varepsilon}k^2}{N}\ll M\ll \frac{(Bd)^2P^{1+v+\varepsilon}k^2}{N}$. Recalling (\ref{S2TransMAfterPoisson}), inserting the size of $M$, bounding everything by absolute value, (\ref{DeligneBound}) and (\ref{CharSumBound}) gives us \begin{align*}
    \left(S_{v,M,\tilde{N},N_1,B,B_d}^{\pm,\eta}\right)^2\ll&\frac{a^2P^\varepsilon}{B_d^2P^{1-v}N_1^2k}\nonumber\\
    &\times\sum_{\substack{B\leq b_1\leq 2B\\ B_d\leq (b_1,d)\leq 2B_d\\
    (P^{1-v},b_1)=1}}\sum_{\substack{B\leq b_2\leq 2B\\ B_d\leq (b_2,d)\leq 2B_d\\
    (P^{1-v},b_2)=1}}\sum_{u_1|b_1P^v}\sum_{u_2|b_2P^v}u_1u_2\sum_{\substack{N_1\leq n_1\leq 2N_1\\
    n_1|b_{1,d}P^v}}\sum_{\substack{N_1\leq n_2\leq 2N_1\\
    n_2|b_{2,d}P^v}}\\
    &\times \sumast_{\beta_1\Mod{\frac{b_{1,d}P^v}{n_1}}}\sumast_{\beta_2\Mod{\frac{b_{2,d}P^v}{n_2}}}\sum_{n}\sum_{m_1\sim M}\sum_{m_2\sim M}\delta\left((*)\right)\sup_{\sigma=\pm1}\sup_{\substack{T=0 \text{ or }\\ P^{-\varepsilon}\leq T\leq k^\frac{1}{3}P^{-\varepsilon}}}\left|\mathcal{J}_{\sigma T}\right|,
\end{align*}
with $(*)$ the set of congruences in (\ref{*}).

Similarly, for the case $\chi\neq \chi_0, v=1$, we have \begin{align*}
    \left(S_{1,M,\tilde{N},N_1,B,B_d}^{\pm,\eta}\right)^2\ll&\frac{a^2P^\varepsilon}{B_d^2N_1^2k}BP\sum_{\substack{B\leq b_1\leq 2B\\ B_d\leq (b_1,d)\leq 2B_d}}\sum_{\substack{B\leq b_2\leq 2B\\ B_d\leq (b_2,d)\leq 2B_d}}\sum_{\substack{N_1\leq n_1\leq 2N_1\\
    n_1|b_{1,d}P}}\sum_{\substack{N_1\leq n_2\leq 2N_1\\
    n_2|b_{2,d}P}}\\
    &\times \sumast_{\beta_1\Mod{\frac{b_{1,d}P}{n_1}}}\sumast_{\beta_2\Mod{\frac{b_{2,d}P}{n_2}}}\sum_{n}\sum_{m_1\sim M}\sum_{m_2\sim M}\delta\left((**)\right)\sup_{\sigma=\pm1}\sup_{\substack{T=0 \text{ or }\\ P^{-\varepsilon}\leq T\leq k^\frac{1}{3}P^{-\varepsilon}}}\left|\mathcal{J}_{\sigma T}\right|,
\end{align*}
with \begin{align*}
    (**)=n\equiv\mp\beta_1\frac{b_{2,d}P^v}{n_2}\pm\beta_2\frac{b_{1,d}P^v}{n_1}\Mod{\frac{b_{1,d}b_{2,d}P^{2v}}{n_1n_2}}
\end{align*}
as defined above.

Denote $S_{v,M,\tilde{N},N_1,B,B_d,0}^{\pm,\eta}$ and $S_{v,M,\tilde{N},N_1,B,B_d,\sigma T}^{\pm,\eta}$ be the contribution of $\mathcal{J}_0$ and $\mathcal{J}_{\sigma T}$ to \newline $S_{v,M,\tilde{N},N_1,B,B_d}^{\pm,\eta}$ respectively. The rest of treatment is almost identical to the first and third case, with some slight complications.

\subsubsection{Sub Case: $T=0$}

For any $A>0$, recalling (\ref{PsiDef}) and (\ref{UltIntegralDef}), we apply Lemma \ref{HugeIntegralAnalysisTransit} on $\mathcal{J}_0$ to get the following:

If $\frac{N}{B^4d^2P^{1+v}k}\left|m_2t_2^2b_1^2-m_1t_1^2b_2^2\right|\ll P^\varepsilon$, we have
\begin{align}\label{IntegralConclusionTransM0SmallH}
    \mathcal{J}_0\ll k^\frac{10}{3}P^\varepsilon\delta\left(|n|\ll\frac{B_dN}{Bd^3P^vk^3}P^\varepsilon, \frac{(BdP^vk)^3}{B_d^3NN_1^2}P^{-\varepsilon}\ll\tilde{N}\ll\frac{(BdP^vk)^3}{B_d^3NN_1^2}P^\varepsilon\right)+O\left(P^{-A}\right),
\end{align}
and if $\frac{N}{B^4d^2P^{1+v}k}\left|m_2t_2^2b_1^2-m_1t_1^2b_2^2\right|\gg P^\varepsilon$, we have \begin{align}\label{IntegralConclusionTransM0LargeH}
    \mathcal{J}_0\ll& k^\frac{10}{3}P^\varepsilon\left(\frac{\sqrt{N}}{B^3\sqrt{M}dP^\frac{1+v}{2}}\left|m_2t_2^2b_1^2-m_1t_1^2b_2^2\right|\right)^{-\frac{1}{2}}\delta\left(\frac{(BdP^vk)^3}{B_d^3NN_1^2}P^{-\varepsilon}\ll\tilde{N}\ll\frac{(BdP^vk)^3}{B_d^3NN_1^2}P^\varepsilon,\right.\nonumber\\
    &\left.\frac{B_dN^2}{B^5d^5P^{1+2v}k^4}P^{-\varepsilon}\left|m_2t_2^2b_1^2-m_1t_1^2b_2^2\right|\ll |n|\ll \frac{B_dN^2}{B^5d^5P^{1+2v}k^4}P^\varepsilon\left|m_2t_2^2b_1^2-m_1t_1^2b_2^2\right|\right)+O\left(P^{-A}\right).
\end{align}

Separating the case $\left|m_2t_2^2b_1^2-m_1t_1^2b_2^2\right|\leq\frac{B^4d^2P^{1+v+\varepsilon}k}{N}$ and performing dyadic subdivision for $\left|m_2t_2^2b_1^2-m_1t_1^2b_2^2\right|\geq \frac{B^4d^2P^{1+v+\varepsilon}k}{N}$, we get for any $A>0$, \begin{align}\label{SuvTransM0AfterCauchy0split}
    \left(S_{v,M,\tilde{N},N_1,B,B_d,0}^{\pm,\eta}\right)^2\ll&\frac{a^2P^\varepsilon}{B_d^2P^{1-v}N_1^2k}\left(\mathcal{S}_{v,M,\tilde{N},N_1,B,B_d,0,0}^{\pm,\eta}+\sup_{\frac{B^4d^2P^{1+v+\varepsilon}k}{N}\leq H_0\leq \frac{B^4d^2P^{1+v+\varepsilon}k^2}{N}}\mathcal{S}_{v,M,\tilde{N},N_1,B,B_d,0,H_0}^{\pm,\eta}\right),
\end{align}
where \begin{align}
    \mathcal{S}_{0,0}:=&\mathcal{S}_{v,M,\tilde{N},N_1,B,B_d,0,0}^{\pm,\eta}\nonumber\\
    =&\sum_{\substack{B\leq b_1\leq 2B\\ B_d\leq (b_1,d)\leq 2B_d\\
    (P^{1-v},b_1)=1}}\sum_{\substack{B\leq b_2\leq 2B\\ B_d\leq (b_2,d)\leq 2B_d\\
    (P^{1-v},b_2)=1}}\sum_{u_1|b_1P^v}\sum_{u_2|b_2P^v}u_1u_2\sum_{\substack{N_1\leq n_1\leq 2N_1\\
    n_1|b_{1,d}P^v}}\sum_{\substack{N_1\leq n_2\leq 2N_1\\
    n_2|b_{2,d}P^v}}\sumast_{\beta_1\Mod{\frac{b_{1,d}P^v}{n_1}}}\sumast_{\beta_2\Mod{\frac{b_{2,d}P^v}{n_2}}}\nonumber\\
    &\times\sum_n\sum_{m_1\sim M}\sum_{m_2\sim M}\delta\left( \left|m_2t_2^2b_1^2-m_1t_1^2b_2^2\right|\leq \frac{B^4d^2P^{1+v+\varepsilon}k}{N}, (*)\right)\left|\mathcal{J}_0\right|.
\end{align}
and \begin{align}
    \mathcal{S}_{0,H_0}:=&\mathcal{S}_{v,M,\tilde{N},N_1,B,B_d,0,H_0}^{\pm,\eta}\nonumber\\
    =&\sum_{\substack{B\leq b_1\leq 2B\\ B_d\leq (b_1,d)\leq 2B_d\\
    (P^{1-v},b_1)=1}}\sum_{\substack{B\leq b_2\leq 2B\\ B_d\leq (b_2,d)\leq 2B_d\\
    (P^{1-v},b_2)=1}}\sum_{u_1|b_1P^v}\sum_{u_2|b_2P^v}u_1u_2\sum_{\substack{N_1\leq n_1\leq 2N_1\\
    n_1|b_{1,d}P^v}}\sum_{\substack{N_1\leq n_2\leq 2N_1\\
    n_2|b_{2,d}P^v}}\sumast_{\beta_1\Mod{\frac{b_{1,d}P^v}{n_1}}}\sumast_{\beta_2\Mod{\frac{b_{2,d}P^v}{n_2}}}\nonumber\\
    &\times\sum_n\sum_{m_1\sim M}\sum_{m_2\sim M}\delta\left(H_0\leq \left|m_2t_2^2b_1^2-m_1t_1^2b_2^2\right|\leq 2H_0, (*)\right)\left|\mathcal{J}_0\right|.
\end{align}

By (\ref{IntegralConclusionTransM0SmallH}) and (\ref{IntegralConclusionTransM0LargeH}), $n=0$ can only occur in $\mathcal{S}_{0,0}$ and with the same analysis done in the diagonal $n=0$ sub case in the previous cases, we have the contribution of $n=0$ is bounded by \begin{align}\label{S00Bound}
    \ll &k^\frac{10}{3}P^\varepsilon\frac{B}{B_d}\frac{BP^v}{B_dN_1}\sup_{u_1,u_2\ll BP^v}u_1u_2\left(\frac{M}{u_1}+1\right)\left(\min\left\{\frac{B^2d^2P^{1+v}k}{u_2N},\frac{M}{u_2}\right\}+1\right)\nonumber\\
    \ll&P^\varepsilon\frac{B^2P^vk^\frac{10}{3}}{B_d^2N_1}\left(\frac{B^2d^2P^{1+v}k^2}{N}+BP^v\right)\left(\frac{B^2d^2P^{1+v}k}{N}+BP^v\right).
\end{align}
Here we used \begin{align*}
    \left|m_2t_2^2b_1^2-m_1t_1^2b_2^2\right|\leq \frac{B^4d^2P^{1+v+\varepsilon}k}{N} \Rightarrow \left|m_2-\frac{m_1t_1^2b_2^2}{t_2^2b_1^2}\right|\leq \frac{B^2d^2P^{1+v+\varepsilon}k}{N}.
\end{align*}
For $n\neq0$, same argument as the sub case $n\neq0$ in previous cases but with (\ref{IntegralConclusionTransM0SmallH}) and (\ref{IntegralConclusionTransM0LargeH}) gives us the contribution of $n\neq0$ to $\mathcal{S}_{0,0}$ is bounded by \begin{align}\label{temp}
    \ll& k^\frac{10}{3}P^\varepsilon\sum_{c_0\ll\frac{BP^v}{B_dN_1}}\frac{B_dN}{c_0Bd^3P^vk^3}\sup_{c_{1,0},c_{2,0}\ll \frac{BP^v}{c_0B_dN_1}}\min\left\{\frac{B}{B_d},\frac{BP^v}{c_0c_{1,0}}\right\}\min\left\{\frac{B}{B_d},\frac{BP^v}{c_0c_{2,0}}\right\}c_0c_{1,0}\nonumber\\
    &\times \sup_{u_1,u_2\ll BP^v}u_1u_2\left(\frac{M}{u_1}+1\right)\left(\min\left\{\frac{B^2d^2P^{1+v}k}{u_2N},\frac{M}{u_2}\right\}+1\right)\delta\left(B\ll\frac{B_dN}{d^3P^vk^3}P^\varepsilon\right)\nonumber\\
    \ll&P^\varepsilon\frac{BNk^\frac{1}{3}}{d^3}\left(\frac{B^2d^2P^{1+v}k^2}{N}+BP^v\right)\left(\frac{B^2d^2P^{1+v}k}{N}+BP^v\right)\delta\left(B\ll\frac{B_dN}{d^3P^vk^3}P^\varepsilon\right).
\end{align}
and the contribution of $n\neq0$ to $\mathcal{S}_{0,H_0}$ is bounded by \begin{align}\label{S0non0Bound}
    \ll&k^\frac{10}{3}P^\varepsilon\left(\frac{\sqrt{N}H_0}{B^3\sqrt{M}dP^\frac{1+v}{2}}\right)^{-\frac{1}{2}}\sum_{c_0\ll\frac{BP^v}{B_dN_1}}\frac{B_dN^2H_0}{c_0B^5d^5P^{1+2v}k^4}\sup_{c_{1,0},c_{2,0}\ll \frac{BP^v}{c_0B_dN_1}}\min\left\{\frac{B}{B_d},\frac{BP^v}{c_0c_{1,0}}\right\}\min\left\{\frac{B}{B_d},\frac{BP^v}{c_0c_{2,0}}\right\}c_0c_{1,0}\nonumber\\
    &\times \sup_{u_1,u_2\ll BP^v}u_1u_2\left(\frac{M}{u_1}+1\right)\left(\min\left\{\frac{H_0}{u_2B^2},\frac{M}{u_2}\right\}+1\right)\nonumber\\
    \ll&P^\varepsilon\frac{BNk^\frac{5}{6}}{d^3}\left(\frac{B^2d^2P^{1+v}k^2}{N}+BP^v\right)^2.
\end{align}
Here we used \begin{align*}
    \left|m_2t_2^2b_1^2-m_1t_1^2b_2^2\right|\leq H_0 \Rightarrow \left|m_2-\frac{m_1t_1^2b_2^2}{t_2^2b_1^2}\right|\leq \frac{H_0}{B^2}.
\end{align*}
Notice that (\ref{S0non0Bound}) dominates (\ref{temp}). Putting (\ref{S00Bound}) and (\ref{S0non0Bound}) into $\mathcal{S}_{0,0}, \mathcal{S}_{0,H_0}$ and (\ref{SuvTransM0AfterCauchy0split}), we get \begin{align}\label{SuvTransM0Bound}
    \left(S_{v,M,\tilde{N},N_1,B,B_d,0}^{\pm,\eta}\right)^2\ll&\frac{a^2P^\varepsilon}{B_d^2P^{1-v}N_1^2k}\nonumber\\
    &\times\left(\frac{B^2P^vk^\frac{10}{3}}{B_d^2N_1}\left(\frac{B^2d^2P^{1+v}k}{N}+BP^v\right)+\frac{BNk^\frac{5}{6}}{d^3}\left(\frac{B^2d^2P^{1+v}k^2}{N}+BP^v\right)\right)\left(\frac{B^2d^2P^{1+v}k^2}{N}+BP^v\right).
\end{align}

Performing the analogous treatment similar to the previous cases for the case $\chi\neq\chi_0, v=1$, we have \begin{align}\label{SuvchiTransM0Bound}
    \left(S_{1,M,\tilde{N},N_1,B,B_d,0}^{\pm,\eta}\right)^2\ll&\frac{a^2P^\varepsilon}{B_d^2N_1^2k}\left(\frac{B^5d^2P^4k^\frac{16}{3}}{B_d^2N_1N}\left(\frac{B^2d^2P^2k}{N}+1\right)+\frac{B^4P^3k^\frac{17}{6}}{d}\left(\frac{B^2d^2P^2k^2}{N}+1\right)\right).
\end{align}

\subsubsection{Sub Case: $P^{-\varepsilon}\leq T\leq k^\frac{1}{3}P^{-\varepsilon}$}

For any $A>0$, recalling (\ref{PsiDef}) and (\ref{UltIntegralDef}), we apply Lemma \ref{HugeIntegralAnalysisTransit} on $\mathcal{J}_{\sigma T}$ to get the following:

If $\frac{N\left|b_1m_1(b_2,d)^3n_2^2-b_2m_2(b_1,d)^3n_1^2\right|}{B^3B_d^3d^2N_1^2P^{1+v}k}\ll P^\varepsilon$, we have \begin{align}\label{IntegralConclusionTransMnon0SmallH}
    \mathcal{J}_{\sigma T}\ll&k^3TP^\varepsilon\delta\left(|n|\ll\frac{B_dN}{Bd^3P^vk^3}P^\varepsilon, \frac{(BdP^vk)^3}{B_d^3NN_1^2}P^{-\varepsilon}\ll\tilde{N}\ll\frac{(BdP^vk)^3}{B_d^3NN_1^2}P^\varepsilon\right)+P^{-A},
\end{align}
and if $\frac{N\left|b_1m_1(b_2,d)^3n_2^2-b_2m_2(b_1,d)^3n_1^2\right|}{B^3B_d^3d^2N_1^2P^{1+v}k}\gg P^\varepsilon$, we have \begin{align}\label{IntegralConclusionTransMnon0LargeH}
    \mathcal{J}_{\sigma T}\ll&\frac{B^\frac{3}{2}B_d^\frac{3}{2}dN_1P^\frac{1+v}{2}k^\frac{7}{2}T}{\sqrt{N}}\left|b_1m_1(b_2,d)^3n_2^2-b_2m_2(b_1,d)^3n_1^2\right|^{-\frac{1}{2}}P^\varepsilon\nonumber\\
    &\times \delta\left(\frac{N^2\left|b_1m_1(b_2,d)^3n_2^2-b_2m_2(b_1,d)^3n_1^2\right|}{B^4B_d^2d^5N_1^2P^{1+2v}k^4}P^{-\varepsilon}\ll |n|\ll \frac{N^2\left|b_1m_1(b_2,d)^3n_2^2-b_2m_2(b_1,d)^3n_1^2\right|}{B^4B_d^2d^5N_1^2P^{1+2v}k^4}P^\varepsilon, \right.\nonumber\\
    &\left.\frac{(BdP^vk)^3}{B_d^3NN_1^2}P^{-\varepsilon}\ll\tilde{N}\ll\frac{(BdP^vk)^3}{B_d^3NN_1^2}P^\varepsilon\right)+P^{-A}.
\end{align}

Now we apply dyadic subdivision as in the first three cases and apply the same analysis as all the above cases. Separating the case $\left|b_1m_1(b_2,d)^3n_2^2-b_2m_2(b_1,d)^3n_1^2\right|=0$ and performing dyadic subdivision for \newline $\left|b_1m_1(b_2,d)^3n_2^2-b_2m_2(b_1,d)^3n_1^2\right|\neq 0$, we get for any $A>0$, \begin{align}\label{SuvTransMAfterCauchy0split}
    \left(S_{v,M,\tilde{N},N_1,B,B_d,\sigma T}^{\pm,\eta}\right)^2\ll&\frac{a^2P^\varepsilon}{B_d^2P^{1-v}N_1^2k}\left(\mathcal{S}_{v,M,\tilde{N},N_1,B,B_d,\sigma T,0}^{\pm,\eta}+\sup_{1\leq H\leq BB_d^3MP^\varepsilon}\mathcal{S}_{v,M,\tilde{N},N_1,B,B_d,\sigma T,H}^{\pm,\eta}\right),
\end{align}
where \begin{align}\label{SHTransMT0Bound}
    \mathcal{S}_{\sigma T,H}:=&\mathcal{S}_{v,M,\tilde{N},N_1,B,B_d,\sigma T,H}^{\pm,\eta}\nonumber\\
    =&\sum_{\substack{B\leq b_1\leq 2B\\ B_d\leq (b_1,d)\leq 2B_d\\
    (P^{1-v},b_1)=1}}\sum_{\substack{B\leq b_2\leq 2B\\ B_d\leq (b_2,d)\leq 2B_d\\
    (P^{1-v},b_2)=1}}\sum_{u_1|b_1P^v}\sum_{u_2|b_2P^v}u_1u_2\sum_{\substack{N_1\leq n_1\leq 2N_1\\
    n_1|b_{1,d}P^v}}\sum_{\substack{N_1\leq n_2\leq 2N_1\\
    n_2|b_{2,d}P^v}}\sumast_{\beta_1\Mod{\frac{b_{1,d}P^v}{n_1}}}\sumast_{\beta_2\Mod{\frac{b_{2,d}P^v}{n_2}}}\nonumber\\
    &\times\sum_n\sum_{m_1\sim M}\sum_{m_2\sim M}\delta\left(H\leq \left|b_1m_1(b_2,d)^3n_2^2-b_2m_2(b_1,d)^3n_1^2\right|\leq 2H, (*)\right)\left|\mathcal{J}_{\sigma T}\right|.
\end{align}
Here the definition includes $H=0$. Applying the same analysis as the previous cases but with (\ref{IntegralConclusionTransMnon0SmallH}) and (\ref{IntegralConclusionTransMnon0LargeH}), we get $n=0$ only occurs when $\frac{NH}{B^3B_d^3d^2N_1^2P^{1+v}k}\ll P^\varepsilon$ and the contribution of $n=0$ to $\mathcal{S}_{\sigma T,H}$ is bounded by \begin{align}
    \ll& k^3TP^\varepsilon\frac{B}{B_d}\frac{BP^v}{B_dN_1}\sup_{u_1,u_2\ll BP^v}u_1u_2\left(\frac{M}{u_1}+1\right)\left(\min\left\{\frac{H}{u_2BB_d^3N_1^2},\frac{M}{u_2}\right\}+1\right)\nonumber\\
     \ll&P^\varepsilon\frac{B^2P^vk^\frac{10}{3}}{B_d^2N_1}\left(\frac{B^2d^2P^{1+v}k^2}{N}+BP^v\right)\left(\frac{B^2d^2P^{1+v}k}{N}+BP^v\right),
\end{align}
and the contribution of $n\neq0$ to $\mathcal{S}_{\sigma T,H}$ is bounded by:
for $\frac{NH}{B^3B_d^3d^2N_1^2P^{1+v}k}\ll P^\varepsilon$, \begin{align}\label{temp2}
    \ll&P^\varepsilon k^3T\sum_{c_0\ll\frac{BP^v}{B_dN_1}}\frac{B_dN}{c_0Bd^3P^vk^3}\sup_{c_{1,0},c_{2,0}\ll \frac{BP^v}{c_0B_dN_1}}\min\left\{\frac{B}{B_d},\frac{BP^v}{c_0c_{1,0}}\right\}\min\left\{\frac{B}{B_d},\frac{BP^v}{c_0c_{2,0}}\right\}c_0c_{1,0}\nonumber\\
    &\times \sup_{u_1,u_2\ll BP^v}u_1u_2\left(\frac{M}{u_1}+1\right)\left(\min\left\{\frac{H}{u_2BB_d^3N_1^2},\frac{M}{u_2}\right\}+1\right)\delta\left(\right)\nonumber\\
    \ll&P^\varepsilon\frac{BNk^\frac{1}{3}}{d^3}\left(\frac{B^2d^2P^{1+v}k^2}{N}+BP^v\right)\left(\frac{B^2d^2P^{1+v}k}{N}+BP^v\right)\delta\left(B\ll\frac{B_dN}{d^3P^vk^3}P^\varepsilon\right),
\end{align}
and for $\frac{NH}{B^3B_d^3d^2N_1^2P^{1+v}k}\gg P^\varepsilon$
\begin{align}\label{SHTransMTnon0Bound}
    \ll&P^\varepsilon\frac{B^\frac{3}{2}B_d^\frac{3}{2}dN_1P^\frac{1+v}{2}k^\frac{7}{2}T}{\sqrt{NH}}\sum_{c_0\ll\frac{BP^v}{B_dN_1}}\frac{N^2H}{c_0B^4B_d^2d^5N_1^2P^{1+2v}k^4}\sup_{c_{1,0},c_{2,0}\ll \frac{BP^v}{c_0B_dN_1}}\min\left\{\frac{B}{B_d},\frac{BP^v}{c_0c_{1,0}}\right\}\min\left\{\frac{B}{B_d},\frac{BP^v}{c_0c_{2,0}}\right\}c_0c_{1,0}\nonumber\\
    &\times \sup_{u_1,u_2\ll BP^v}u_1u_2\left(\frac{M}{u_1}+1\right)\left(\min\left\{\frac{H}{u_2BB_d^3N_1^2},\frac{M}{u_2}\right\}+1\right)\nonumber\\
    \ll&P^\varepsilon\frac{BNk^\frac{5}{6}}{d^3}\left(\frac{B^2d^2P^{1+v}k^2}{N}+BP^v\right)^2.
\end{align}

Note that (\ref{temp2}) is being dominated by (\ref{SHTransMTnon0Bound}), hence (\ref{SHTransMT0Bound}) and (\ref{SHTransMTnon0Bound}) gives us \begin{align}\label{SuvTransMTBound}
    \left(S_{v,M,\tilde{N},N_1,B,B_d,\sigma T}^{\pm,\eta}\right)^2\ll&\frac{a^2P^\varepsilon}{B_d^2P^{1-v}N_1^2k}\nonumber\\
    &\times\left(\frac{B^2P^vk^\frac{10}{3}}{B_d^2N_1}\left(\frac{B^2d^2P^{1+v}k}{N}+BP^v\right)+\frac{BNk^\frac{5}{6}}{d^3}\left(\frac{B^2d^2P^{1+v}k^2}{N}+BP^v\right)\right)\left(\frac{B^2d^2P^{1+v}k^2}{N}+BP^v\right),
\end{align}
which is the exact same bound as (\ref{SuvTransM0Bound}).

Performing the analogous treatment similar to the previous cases for the case $\chi\neq\chi_0, v=1$, we have \begin{align}\label{SuvchiTransMTBound}
    \left(S_{1,M,\tilde{N},N_1,B,B_d,\sigma T}^{\pm,\eta}\right)^2\ll&\frac{a^2P^\varepsilon}{B_d^2N_1^2k}\left(\frac{B^5d^2P^4k^\frac{16}{3}}{B_d^2N_1N}\left(\frac{B^2d^2P^2k}{N}+1\right)+\frac{B^4P^3k^\frac{17}{6}}{d}\left(\frac{B^2d^2P^2k^2}{N}+1\right)\right),
\end{align}
which again is the exact same bound as (\ref{SuvchiTransM0Bound}).

\subsubsection{Conclusion of the Transitional Case}

Combining (\ref{SuvTransM0Bound}) and (\ref{SuvTransMTBound}), we get together with $M\ll\frac{P^{1-v+\varepsilon}N}{a^2C^2d^2}$ and the size of $M$ in this case implying $B\ll\frac{N}{aCd^2P^vk}P^\varepsilon$, \begin{align*}
    \left(S_{v,M,\tilde{N},N_1,B,B_d}^{\pm,\eta}\right)^2\ll&\frac{P^\varepsilon}{P^{1-v}k}\left(\frac{N^2k^\frac{4}{3}}{C^2d^4P^v}\left(\frac{NP^{1-v}}{C^2d^2k}+\frac{N}{Cd^2k}\right)+\frac{N^2}{Cd^5P^vk^\frac{1}{6}}\left(\frac{NP^{1-v}}{C^2d^2}+\frac{N}{Cd^2k}\right)\right)\left(\frac{NP^{1-v}}{C^2d^2}+\frac{N}{Cd^2k}\right)\nonumber\\
    \ll&P^\varepsilon\frac{N^4}{C^5d^8P^vk^\frac{2}{3}}\left(1+\frac{P^{1-v}}{C}+\frac{P^{1-v}}{d\sqrt{k}}+\frac{C}{dk^\frac{3}{2}}\right)\left(1+\frac{C}{P^{1-v}k}\right).
\end{align*}
Putting this into (\ref{SvMDyadic2}) in Theorem \ref{SvAfterVDThm}, with the restriction $C\ll\frac{\sqrt{N}}{d}\leq P^\frac{3}{4}k$ and choosing $C\gg P$, for $$\frac{(Bd)^2P^{1+v-\varepsilon}k^2}{N}\ll M\ll \frac{(Bd)^2P^{1+v+\varepsilon}k^2}{N},$$
we get \begin{align}\label{SuvMTransMBound}
    S_{v,M}\ll P^\varepsilon\left(\frac{N^2}{C^\frac{5}{2}d^4k^\frac{1}{3}}+\frac{\sqrt{P}N^2}{C^\frac{5}{2}d^\frac{9}{2}k^\frac{7}{12}}\right).
\end{align}

Similarly for the case $\chi\neq\chi_0, v=1$, we have \begin{align}
    S_{1,M}^{\eta}\ll P^\varepsilon\left(\frac{N^2}{C^\frac{5}{2}d^4\sqrt{P}k^\frac{1}{3}}+\frac{N^\frac{5}{2}}{C^\frac{7}{2}d^5\sqrt{P}k^\frac{5}{6}}+\frac{N^\frac{5}{2}}{C^3d^\frac{11}{2}\sqrt{P}k^\frac{13}{12}}\right).
\end{align}
Note that this is dominated by \ref{SuvMTransMBound} as restriction of $N\ll P^{\frac{3}{2}+\varepsilon}k^2, k<P^\frac{5}{4}$ and choosing $C\gg P$ together gives us \begin{align*}
    \frac{\sqrt{N}}{CPk^\frac{1}{4}}\ll\frac{k^\frac{3}{4}}{CP^\frac{1}{4}}P^\varepsilon\ll \frac{P^{\frac{11}{16}+\varepsilon}}{C}\ll P^{-\varepsilon} \text{ and } \sqrt{\frac{N}{CP^2k}}\ll P^\varepsilon\sqrt{\frac{k}{C\sqrt{P}}}\ll P^{-\varepsilon}.
\end{align*}

\subsection{Combining all Bounds and Conclusion}

Putting (\ref{SuvMSmallMBound}), (\ref{SuvM0MBound}), (\ref{SuvMlargeMBound}) and (\ref{SuvMTransMBound}) into (\ref{SuvDyadic}) and then (\ref{SNDeltaSplit}), clearing the terms are are smaller, we get for $\max\left\{\frac{\sqrt{N}}{d\sqrt{P}},P\right\}\ll C\ll\frac{\sqrt{N}}{d}$, $d^3<k<P^\frac{5}{4}$, \begin{align}
    S_d(N)\ll& P^\varepsilon\left(\frac{C^\frac{3}{2}\sqrt{N}}{d^\frac{3}{2}\sqrt{P}}+\frac{\sqrt{CN}k}{d}+\frac{\sqrt{P}N^\frac{5}{4}}{Cd^3}+\frac{C^2k}{\sqrt{P}}+\frac{\sqrt{P}N^\frac{5}{2}}{C^\frac{7}{2}d^5k}+\frac{N^2}{C^\frac{5}{2}d^4k^\frac{1}{3}}+\frac{\sqrt{P}N^2}{C^\frac{5}{2}d^\frac{9}{2}k^\frac{7}{12}}\right).
\end{align}
Note that this includes the case $\chi\neq\chi_0$ as the bound for this case is the same when $v=0$, and smaller when $v=1$ as shown in the previous subsections. By further restricting $P>d^\frac{8}{3}$ and $P^\frac{1}{4}<k<P^\frac{21}{17}$ and using $N\ll P^{\frac{3}{2}+\varepsilon}k^2$ implies that the above bound simplifies to \begin{align}\label{temptemp}
    S_d(N)\ll& P^\varepsilon\left(\frac{C^\frac{3}{2}\sqrt{N}}{d^\frac{3}{2}\sqrt{P}}+\frac{\sqrt{CN}k}{d}+\frac{\sqrt{P}N^\frac{5}{4}}{Cd^3}+\frac{C^2k}{\sqrt{P}}+\frac{\sqrt{P}N^2}{C^\frac{5}{2}d^\frac{9}{2}k^\frac{7}{12}}\right).
\end{align}
To optimise the bound and achieve Theorem \ref{mainthm}, we separate into two cases.

Case 1: For $P^\frac{1}{4}< k\leq P^\frac{6}{5}$, we choose \begin{align}\label{CChoice}
    C=\min\left\{\frac{P^\frac{2}{5}N^\frac{3}{10}}{d^\frac{3}{5}},\frac{P^\frac{1}{3}\sqrt{N}}{dk^\frac{2}{3}},\frac{P^\frac{1}{3}N^\frac{5}{12}}{dk^\frac{1}{3}},\frac{\sqrt{N}}{d}\right\}.
\end{align}
Restricting $P^\frac{1}{4}<k\leq P^\frac{6}{5}$, one can check with direct computations that such choice satisfies \begin{align}
    \frac{\sqrt{N}}{d\sqrt{P}}+\frac{\sqrt{N}}{dk^\frac{7}{18}}+P\ll C\ll\frac{\sqrt{N}}{d} \text{ when } (Pk)^\frac{4}{3}d^2+P^\frac{8}{5}k^\frac{4}{5}d^\frac{12}{5}+P^2d^2\ll N\ll P^{\frac{3}{2}+\varepsilon}k^2.
\end{align}
Observe that such a choice is almost obtained by minimising the first four terms, with the power of $d$ adjusted slightly for ease of computation. The restriction of $k\leq P^\frac{6}{5}$ ensures that $C\gg \frac{\sqrt{N}}{dk^\frac{7}{18}}$ and that makes the fifth term in (\ref{temptemp}) smaller than the maximum of the first four terms. Such choice of $C$ is the biggest $C$ such that the third term $\frac{\sqrt{P}N^\frac{5}{4}}{Cd^3}$ is the biggest term. Hence such choice of $C$ gives us \begin{align*}
    S_d(N)\ll P^\varepsilon\frac{\sqrt{P}N^\frac{5}{4}}{Cd^3}.
\end{align*}
Substituting (\ref{CChoice}) into the above we get \begin{align}
    S_d(N)\ll P^\varepsilon\left(\frac{P^\frac{1}{10}N^\frac{19}{20}}{d^\frac{12}{5}}+\frac{P^\frac{1}{6}N^\frac{3}{4}k^\frac{2}{3}}{d^\frac{3}{2}}+\frac{P^\frac{1}{6}N^\frac{5}{6}k^\frac{1}{3}}{d^2}+\frac{\sqrt{P}N^\frac{3}{4}}{d^2}\right).
\end{align}

Case 2: For $P^\frac{6}{5}\leq k< P^\frac{21}{17}$, we choose \begin{align}
    C=\frac{P^\frac{1}{6}\sqrt{N}}{d^\frac{7}{6}k^\frac{19}{36}}.
\end{align}
Such choice is made by balancing $\frac{\sqrt{CN}k}{d}$ and $\frac{\sqrt{P}N^2}{C^\frac{5}{2}d^\frac{9}{2}k^\frac{7}{12}}$ and one can check by direction computations that such choice satisfies \begin{align*}
    \frac{\sqrt{N}}{d\sqrt{P}}+P\ll C=\frac{P^\frac{1}{6}\sqrt{N}}{d^\frac{7}{6}k^\frac{19}{36}}\ll\min\left\{\frac{\sqrt{N}}{dk^\frac{7}{18}},\frac{P^\frac{2}{5}N^\frac{3}{10}}{d^\frac{3}{5}},\frac{P^\frac{1}{3}N^\frac{5}{12}}{dk^\frac{1}{3}}\right\}
\end{align*}
when \begin{align}
    P>d^{24} \text{ and } (Pk)^\frac{4}{3}d^2+P^\frac{5}{3}k^\frac{19}{18}d^\frac{7}{3}+P^2d^2\ll N\ll P^{\frac{3}{2}+\varepsilon}k^2.
\end{align}
Therefore we have the bound \begin{align}
    S_d(N)\ll P^\varepsilon\frac{P^\frac{1}{12}N^\frac{3}{4}k^\frac{53}{72}}{d^\frac{19}{12}}.
\end{align}

This concludes the proof of Theorem \ref{mainthm}.

\section{Proof of Corollary \ref{maincor}}\label{maincorSect}

Finally, we apply Theorem \ref{mainthm} and Lemma \ref{SetupLemma} to prove our goal, Corollary \ref{maincor}.

For the case $P^\frac{1}{4}<k\leq P^\frac{6}{5}$, (\ref{trivialBound}) and (\ref{mainBound1}) in Theorem \ref{mainthm} gives us \begin{align*}
    \sup_{\frac{1}{2}\leq N\leq P^{3/2+\varepsilon}k^2}\sum_{(d,P)=1}\frac{S_d(N)}{\sqrt{N}}\ll& P^\varepsilon\sum_{d}\left(\frac{P^\frac{31}{40}k^\frac{9}{10}}{d^\frac{12}{5}}+\frac{P^\frac{13}{24}k^\frac{7}{6}}{d^\frac{3}{2}}+\frac{P^\frac{2}{3}k}{d^2}+\frac{P^\frac{7}{8}\sqrt{k}}{d^2}\right)\\
    &+P^\varepsilon\sum_{d}\frac{P^\frac{3}{4}k}{d^3}\delta\left(d\gg k^\frac{1}{3} \text{ or } d\gg P^\frac{3}{8}\right)+P^\varepsilon\sum_{d}\frac{(Pk)^\frac{2}{3}d+P^\frac{4}{5}k^\frac{2}{5}d^\frac{6}{5}+Pd}{d^3}\\
    \ll&P^\varepsilon\left(P^{\frac{31}{40}}k^{\frac{9}{10}}+P^{\frac{13}{24}}k^{\frac{7}{6}}+P^{\frac{2}{3}}k\right).
\end{align*}

Similarly for the case $P^\frac{6}{5}\leq k <P^\frac{21}{17}$, the (\ref{trivialBound}) and (\ref{mainBound2}) in Theorem \ref{mainthm} yields \begin{align*}
    \sup_{\frac{1}{2}\leq N\leq P^{3/2+\varepsilon}k^2}\sum_{(d,P)=1}\frac{S_d(N)}{\sqrt{N}}\ll& P^\varepsilon\sum_{d}\frac{P^\frac{11}{24}k^\frac{89}{72}}{d^\frac{19}{12}}+P^\varepsilon\sum_{d}\frac{P^\frac{3}{4}k}{d^3}\delta\left(d\gg k^\frac{1}{3} \text{ or } d\gg P^\frac{1}{24}\right)\\
    &+P^\varepsilon\sum_{d}\frac{(Pk)^\frac{2}{3}d+P^\frac{5}{6}k^\frac{19}{36}d^\frac{7}{6}+Pd}{d^3}\\
    \ll&P^{\frac{11}{24}+\varepsilon}k^{\frac{89}{72}}.
\end{align*}

Combining the above bounds with Lemma \ref{SetupLemma} yields Corollary \ref{maincor}.

\appendix

 \section{Reformulation of the Delta Method}\label{DeltaAppendix}

Here we introduce and develop a new simple form of the delta method that is more general than what we need in this paper. To better demonstrate the idea of this delta method, we first take a look at a simple delta method that has been successfully applied to prove various boundary problems. The idea behind this elementary method in detecting equality is to first reduce the size of the equation to some parameter $Y>0$, and then detect the equality by a congruence modulus a prime $p>Y$, i.e. \begin{align}\label{trivialdelta}
    \delta(n=0)=\delta\left(n\equiv 0\Mod{p}\right)h\left(\frac{n}{Y}\right)
\end{align}
for some function $h\left(\frac{n}{Y}\right)$ controlling the size of $|n|$ to be less than $Y$.

While there have been recent successes in using such a simple delta method (see \cite{taspecttrivial}, \cite{besseldelta}, \cite{burgesstrivial}), we would also like to weaken the restriction of $p>Y$. In application, one often wishes to choose a smaller modulus $p$ if possible, which has led to the development of various other delta methods. Inspired by the Duke-Friedlander-Iwaniec's delta method \cite[\S 3]{DFIdelta}, here we use a slight modification. By incorporating a hyperbola or divisor trick similar to the start of the proof of D-F-I's delta method, one gains the freedom of choosing moduli as shown in the following theorem.

\begin{thm}\label{DeltaMethod}
    Let $\varepsilon>0$, $n,q$ be integers such that $q>0$ and $|n|\ll N\rightarrow\infty$. Let $C,D>0$ be parameters such that $C>N^\varepsilon$. Let $U, W\neq 0$ be non-negative smooth even functions such that $U(0)=1, W(0)=0$ and $U$ decays exponentially at $\infty$. Then for any $A>0$, \begin{align*}
        \delta(n=0)=\mathcal{S}_1-\mathcal{S}_2+O_A\left(N^{-A}\right),
    \end{align*}
    where \begin{align*}
        \mathcal{S}_1=U_C\sum_{1\leq c<CN^\varepsilon}\delta\left(n\equiv 0\Mod{cq}\right)W(c)U\left(\frac{n}{cDq}\right)U\left(\frac{c}{C}\right)
    \end{align*}
    and \begin{align*}
        \mathcal{S}_2=U_C\sum_{1\leq d<DN^\varepsilon}\delta(n\equiv 0\Mod{dq})W\left(\frac{n}{dq}\right)U\left(\frac{n}{Cdq}\right)U\left(\frac{d}{D}\right),
    \end{align*}
    with \begin{align*}
        U_C=\left(\sum_{c\geq1}W(c)U\left(\frac{c}{C}\right)\right)^{-1}.
    \end{align*}
\end{thm}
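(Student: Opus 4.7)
The plan is to establish $\mathcal{S}_1-\mathcal{S}_2=\delta(n=0)+O_A(N^{-A})$ by a direct case analysis on $n$. The three cases are $n=0$, $n\neq 0$ with $q\nmid n$, and $n\neq 0$ with $q\mid n$, of which only the last carries content.

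First I would handle $n=0$, where every divisibility condition is automatic. Using $U(0)=1$ together with the definition of $U_C$, the sum $\mathcal{S}_1$ becomes $U_C\sum_{1\leq c<CN^\varepsilon}W(c)U(c/C)$, which equals $1$ up to the tail over $c\geq CN^\varepsilon$; the exponential decay of $U$ bounds this tail by $O_A(N^{-A})$. Meanwhile $\mathcal{S}_2=U_C\sum_d W(0)U(0)U(d/D)=0$ since $W(0)=0$, giving the correct value $1$. If $n\neq 0$ but $q\nmid n$, then no $c\geq 1$ satisfies $cq\mid n$ (and likewise for $d$), so both $\mathcal{S}_1$ and $\mathcal{S}_2$ vanish identically.

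In the main case $n\neq 0$ with $q\mid n$, I would set $m=n/q$ and use evenness of $U,W$ to assume $m>0$. The conditions $cq\mid n$ and $dq\mid n$ reduce to $c\mid m$ and $d\mid m$, so that
\begin{align*}
\mathcal{S}_1 &= U_C\sum_{\substack{c\mid m\\ 1\leq c<CN^\varepsilon}}W(c)\,U(c/C)\,U(m/(cD)),\\
\mathcal{S}_2 &= U_C\sum_{\substack{d\mid m\\ 1\leq d<DN^\varepsilon}}W(m/d)\,U(m/(Cd))\,U(d/D).
\end{align*}
The crucial step is the hyperbola substitution $c=m/d$ in $\mathcal{S}_2$: each factor transforms into the corresponding factor of $\mathcal{S}_1$, namely $W(m/d)\to W(c)$, $U(m/(Cd))\to U(c/C)$, and $U(d/D)\to U(m/(cD))$. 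The two summands thereby coincide, and the sums agree on the overlap of the $c$-ranges $1\leq c<CN^\varepsilon$ and $m/(DN^\varepsilon)<c\leq m$. On the symmetric difference either $c>CN^\varepsilon$, making $U(c/C)$ exponentially small, or $c<m/(DN^\varepsilon)$, making $U(m/(cD))$ exponentially small; summing over the $\tau(m)\ll N^\varepsilon$ divisors of $m$ then yields $\mathcal{S}_1-\mathcal{S}_2=O_A(N^{-A})$.

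The essential content lies entirely in the divisor substitution $c\leftrightarrow m/c$, which is exactly the hyperbola trick appearing at the start of the Duke--Friedlander--Iwaniec delta method; everything else is routine bookkeeping of tails via the exponential decay of $U$. I do not anticipate any serious obstacle provided $W$ is at worst polynomially growing, so that the tail sums weighted by $W(c)$ or $W(m/c)$ remain of size $O_A(N^{-A})$.
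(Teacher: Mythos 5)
Your argument is correct and rests on the same essential mechanism as the paper's proof: the hyperbola substitution $c \leftrightarrow m/d$ (equivalently, evaluating the divisor sum $n = cdq$ in two ways), the use of $U(0)=1$ to pick up the $n=0$ term in $\mathcal{S}_1$, the use of $W(0)=0$ to kill it in $\mathcal{S}_2$, and the exponential decay of $U$ for the tails. The paper phrases it synthetically by defining an untruncated double sum $S_2 = \sum_{d\neq 0}\sum_{c\geq 1}\delta(n=cdq)W(c)U\bigl(\frac{n}{cDq}\bigr)U\bigl(\frac{n}{Cdq}\bigr)$ and solving for $d$ or for $c$ before truncating, whereas you verify the truncated identity directly by case analysis; that is only a bookkeeping difference, and your caveat about needing $W$ of at most polynomial growth is a real (if implicit) hypothesis in the paper's version too.
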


The reader should observe that the role of $U\left(\frac{n}{cDq}\right)$ is to control $|n|$ to be of size less than $cDqN^\varepsilon$. Furthermore, we are not detecting equality using a single congruence modulus $p>CDq$, but instead we are averaging over a collection of congruences where we have freedom to control their sizes. Finally, the reader should also observe that $\mathcal{S}_1$ is the sum that carries the delta term $n=0$ naturally and $\mathcal{S}_2$ is identically $0$ when $n=0$.

\begin{proof}
Let \begin{align*}
    \mathcal{D}=\delta(n=0)\sum_{c\geq1}W(c)U\left(\frac{c}{C}\right),
\end{align*}
\begin{align*}
    S_2=\sum_{0\neq d\in\Z}\sum_{c\geq1}\delta(n=cdq)W(c)U\left(\frac{n}{cDq}\right)U\left(\frac{n}{Cdq}\right)
\end{align*}
and define \begin{align*}
    S_1=\mathcal{D}+S_2.
\end{align*}

We start by rewriting $d=\frac{n}{cq}$ in $S_2$, we get \begin{align*}
    S_2=&\sum_{c\geq1}\delta\left(n\equiv 0\Mod{cq}, n\neq 0\right)W(c)U\left(\frac{n}{cDq}\right)U\left(\frac{c}{C}\right).
\end{align*}
Hence adding up $\mathcal{D}$ and $S_2$ with $U(0)=1$, we get \begin{align}
    S_1=\sum_{c\geq1}\delta\left(n\equiv 0\Mod{cq}\right)W(c)U\left(\frac{n}{cDq}\right)U\left(\frac{c}{C}\right).
\end{align}
Instead, if we start by rewriting $c=\frac{n}{dq}$ in $S_2$, we get \begin{align*}
    S_2=\sum_{0\neq d\in\Z}\delta\left(n\equiv 0\Mod{dq}, \frac{n}{d}>0\right)W\left(\frac{n}{dq}\right)U\left(\frac{n}{Cdq}\right)U\left(\frac{d}{D}\right).
\end{align*}
If $n>0$, we get \begin{align*}
    S_2=\sum_{d\geq 1}\delta\left(n\equiv 0\Mod{dq}\right)W\left(\frac{n}{dq}\right)U\left(\frac{n}{Cdq}\right)U\left(\frac{d}{D}\right).
\end{align*}
and if $n<0$, we get \begin{align*}
    S_2=&\sum_{d\geq1}\delta\left(n\equiv 0\Mod{-dq}\right)W\left(-\frac{n}{dq}\right)U\left(-\frac{n}{Cdq}\right)U\left(-\frac{d}{D}\right)\\
    =&\sum_{d\geq 1}\delta\left(n\equiv 0\Mod{dq}\right)W\left(\frac{n}{dq}\right)U\left(\frac{n}{Cdq}\right)U\left(\frac{d}{D}\right)
\end{align*}
as $U$ and $W$ are even. Combining both cases above with the case $n=0$ giving $0$ in $S_2$ as $W(0)=0$, \begin{align}
    S_2=\sum_{d\geq 1}\delta\left(n\equiv 0\Mod{dq}\right)W\left(\frac{n}{dq}\right)U\left(\frac{n}{Cdq}\right)U\left(\frac{d}{D}\right).
\end{align}
Truncating the $c$ and $d$-sum in $S_1$ and $S_2$ by the exponential decay of $U$ and defining \begin{align*}
    U_C:=\left(\sum_{c\geq1}W(c)U\left(\frac{c}{C}\right)\right)^{-1},
\end{align*}
we obtain Theorem \ref{DeltaMethod}.
\end{proof}

In this paper, we apply this delta method with the standard use of additive characters to detect the congruence condition, i.e. \begin{align*}
    \delta(n\equiv 0\Mod{c})=\frac{1}{c}\sum_{\alpha\Mod{c}}e\left(\frac{\alpha n}{c}\right),
\end{align*}
with the choice of parameters $C=D, q=1$, and $W(x)=W'\left(\frac{x}{C}\right)$ with $W'\in C_c^\infty([-2,-1]\bigcup [1,2])$. This gives us the following corollary.

\begin{cor}\label{DeltaCor}
    Let $\varepsilon>0$, $n,q$ be integers such that $q>0$, $|n|\ll N\rightarrow\infty$ and let $C>N^\varepsilon$ be a parameter. Let $U\in C_c^\infty(\R), W\in C_c^\infty([-2,-1]\bigcup [1,2])$ be a fixed non-negative even function such that $U(x)=1$ for $-2\leq x\leq 2$. Then we have \begin{align*}
        \delta(n=0)=\frac{1}{\mathcal{C}}\sum_{c\geq1}\frac{1}{c}\sum_{\alpha\Mod{cq}}e\left(\frac{\alpha n}{cq}\right)h\left(\frac{c}{C},\frac{n}{cCq}\right),
    \end{align*}
    with $\mathcal{C}=\displaystyle\sum_{c\geq1}W\left(\frac{c}{C}\right)\sim C$ and \begin{align*}
        h\left(x,y\right)=W\left(x\right)U\left(x\right)U\left(y\right)-W(y)U(x)U(y).
    \end{align*}
    In particular, $h$ is a fixed smooth function satisfying $h(x,y)\ll \delta(|x|,|y|\ll 1).$
\end{cor}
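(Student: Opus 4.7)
The plan is to specialize Theorem \ref{DeltaMethod} to obtain the Corollary. First I would choose parameters $D = C$ and take the theorem's weight function to be $W(\cdot/C)$, where $W$ is the corollary's compactly supported weight. This rescaled kernel is automatically non-negative, smooth, even, and vanishes at $0$ (since $W$ is supported away from the origin), while $U \in C_c^\infty(\mathbb{R})$ trivially decays exponentially at infinity by virtue of its compact support and satisfies $U(0)=1$. Hence the hypotheses of Theorem \ref{DeltaMethod} are met, and the conclusion reads $\delta(n=0) = \mathcal{S}_1 - \mathcal{S}_2 + O_A(N^{-A})$. The truncation thresholds $c < CN^\varepsilon$ and $d < CN^\varepsilon$ in the theorem are vacuous here: the factor $W(c/C)$ in $\mathcal{S}_1$ is supported on $c/C \in [1,2]$ and the factor $W(n/(dqC))$ in $\mathcal{S}_2$, combined with $|n| \ll N$ and $C > N^\varepsilon$, forces $d$ to lie in a range dwarfed by $CN^\varepsilon$.

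Next I would open each congruence by additive characters,
\[
\delta(n \equiv 0 \Mod{cq}) = \frac{1}{cq}\sum_{\alpha \Mod{cq}} e\!\left(\frac{\alpha n}{cq}\right),
\]
and rename the summation variable $d \mapsto c$ in $\mathcal{S}_2$ so that both sums share the identical outer structure $\sum_c (cq)^{-1} \sum_{\alpha \Mod{cq}} e(\alpha n/(cq))$. The key simplification is that $U \equiv 1$ on $[-2,2]$ contains the support $[1,2] \cup [-2,-1]$ of $W$, so inserting the pair of factors $U(c/C)U(n/(cCq))$ into \emph{both} weights is cost-free. This lets me combine the bracketed weight into the single symmetric expression
\[
W\!\left(\tfrac{c}{C}\right)U\!\left(\tfrac{c}{C}\right)U\!\left(\tfrac{n}{cCq}\right) - W\!\left(\tfrac{n}{cCq}\right)U\!\left(\tfrac{c}{C}\right)U\!\left(\tfrac{n}{cCq}\right) = h\!\left(\tfrac{c}{C}, \tfrac{n}{cCq}\right),
\]
matching exactly the function $h$ in the statement.

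Finally I would verify the tail claims: $h$ is a fixed smooth function because $U, W$ are fixed smooth functions; the compact support of $W$ in $[-2,-1] \cup [1,2]$ forces each summand of $h$ to vanish unless at least one argument lies in $[1,2]$ (in absolute value), giving $h(x,y) \ll \delta(|x|,|y| \ll 1)$. The normalization $\mathcal{C} = \sum_{c\geq 1} W(c/C)$ is a Riemann sum for the fixed nonzero integral $\int_0^\infty W(x)\,dx$, yielding $\mathcal{C} \sim C$ by Euler--Maclaurin (with $O(N^{-A})$ error from smoothness). The main obstacle, namely producing the identity $\delta(n=0) = \mathcal{S}_1 - \mathcal{S}_2 + O(N^{-A})$ in the first place, is already dispensed with by the proof of Theorem \ref{DeltaMethod}; what remains here is purely notational bookkeeping: identifying the combined weight as $h$, recognizing the compact-support reductions, and absorbing the arithmetic factor $q$ from the additive-character expansion into the stated form.
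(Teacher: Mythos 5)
Your approach — specializing Theorem \ref{DeltaMethod} with $D=C$ and the theorem's weight taken to be $W(\cdot/C)$, opening each congruence by additive characters, renaming $d\mapsto c$, and combining the two weights into $h$ — is precisely the paper's intended route, and your identification of $U_C^{-1}=\mathcal{C}$ using $U\equiv 1$ on the support of $W$ is correct, as is your observation that no error term survives since $h$ is compactly supported. There are, however, two specific slips.

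First, your justification for the $\mathcal{S}_2$ truncation being vacuous does not hold: the constraint $W(n/(dqC))\neq 0$ only gives $d\asymp |n|/(qC)\ll N^{1-\varepsilon}/q$, which is \emph{not} automatically $\ll CN^\varepsilon$ when $\varepsilon$ is small. The truncation is in fact vacuous because the factor $U(d/C)$ has compact support, forcing $d\ll C<CN^\varepsilon$; you should cite $U$, not $W$. Second, the $q$-bookkeeping does not close. Opening $\delta(n\equiv 0\Mod{cq})=\frac{1}{cq}\sum_{\alpha\Mod{cq}}e(\alpha n/cq)$ produces the normalizer $\frac{1}{cq}$, so the specialization actually yields
\begin{align*}
\delta(n=0)=\frac{1}{q\mathcal{C}}\sum_{c\geq 1}\frac{1}{c}\sum_{\alpha\Mod{cq}}e\left(\frac{\alpha n}{cq}\right)h\left(\frac{c}{C},\frac{n}{cCq}\right),
\end{align*}
which differs from the Corollary's displayed identity by an overall $1/q$ (set $n=0$ to see both sides must match $1$, and the displayed formula gives $q$). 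That factor cannot simply be ``absorbed into the stated form''; you should either take $q=1$ (which is what the paper's own prose specifies and what its application uses) or record that the stated formula requires a $1/q$ correction for general $q>0$.
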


\section{Gamma Functions}

The first important tool to study Gamma functions is its asymptotics given by Stirling's approximation. See, for example, \cite{stirling1}, \cite{stirling2} for details.

\begin{lemma}\label{Stirling}
    There exists a sequence of constants $(\gamma_n)$, called the Stirling coefficient, such that for any $z\in\C$ with $|\arg(z)|\leq\frac{\pi}{2}$, $N\geq1$, we have \begin{align*}
        \Gamma(z)=\sqrt{2 \pi} z^{z-\frac{1}{2}} e^{-z}\left(\sum_{n=0}^{N-1}(-1)^{n} \frac{\gamma_{n}}{z^{n}}+O\left(|z|^{-N}\right)\right).
    \end{align*}
\end{lemma}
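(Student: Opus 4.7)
The plan is to derive the asymptotic expansion by applying the Laplace (saddle-point) method to the Mellin integral representation of the Gamma function, and then extend the resulting estimate from the open right half-plane to the closed sector $|\arg z|\le\pi/2$.

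First, I would restrict to $\re(z)>0$ and start from
\[
\Gamma(z)=\int_0^\infty e^{-t}t^{z-1}\,dt.
\]
Substituting $t=z(1+u)$ (with the principal branch used to define $z^{z-1}$) gives
\[
\Gamma(z)=z^{z}e^{-z}\int_{-1}^{\infty}e^{z\phi(u)}\frac{du}{1+u},\qquad \phi(u):=\log(1+u)-u,
\]
where $\phi$ has a unique critical point at $u=0$ with $\phi(0)=0$ and $\phi''(0)=-1$. A neighborhood of $u=0$ therefore contributes the main term, while the tails are cut off at exponential cost in $\re(z)$.

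Second, I would apply the standard saddle-point change of variables $\phi(u)=-v^2/2$, choosing the branch so that $v$ and $u$ have the same sign near $0$. By the analytic inverse function theorem this yields a convergent expansion $u(v)=v+a_2v^2+a_3v^3+\cdots$. Writing $F(v):=(1+u(v))^{-1}u'(v)=\sum_{j\ge0}f_jv^j$ and using the Gaussian moments
\[
\int_\R e^{-zv^2/2}v^{2n}\,dv=\sqrt{2\pi/z}\,\frac{(2n)!}{2^{n}n!}\,z^{-n},
\]
Watson's lemma converts the Taylor expansion of $F$ at the origin into the sought asymptotic series, with explicit $\gamma_n$ read off from $f_{2n}$ up to standard double-factorial normalisation. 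The truncation error at order $N$ is $O(|z|^{-N})$ because $F$ is analytic in a fixed neighborhood of $0$.

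Third, to cover the full sector $|\arg z|\le\pi/2$, I would for $z=|z|e^{i\theta}$ with $|\theta|<\pi/2$ rotate the $t$-contour onto the ray $e^{i\theta}(0,\infty)$, which preserves decay at infinity of $e^{-t}t^{z-1}$ and reduces the problem to a real saddle-point analysis giving the expansion uniformly in any closed subsector $|\theta|\le\pi/2-\delta$. The main obstacle is uniformity up to the imaginary axis, where absolute convergence of the Mellin integral fails and decay is replaced by oscillation. The cleanest way to finish is to invoke Binet's second formula
\[
\log\Gamma(z)=\bigl(z-\tfrac12\bigr)\log z-z+\tfrac12\log(2\pi)+2\int_0^\infty\frac{\arctan(t/z)}{e^{2\pi t}-1}\,dt,
\]
valid throughout $\re(z)>0$. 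Expanding $\arctan(t/z)=\sum_{k\ge0}(-1)^k(t/z)^{2k+1}/(2k+1)$, integrating termwise against the known Bernoulli-number evaluations of $\int_0^\infty t^{2k+1}/(e^{2\pi t}-1)\,dt$, and bounding the remainder via $|\arctan w-\sum_{k<N}(-1)^k w^{2k+1}/(2k+1)|\ll|w|^{2N+1}$ produces an expansion for $\log\Gamma(z)$ with uniform error $O(|z|^{-(2N-1)})$ on $|\arg z|\le\pi/2$. Exponentiating this expansion and collecting terms yields the multiplicative form stated in the lemma; matching the first two coefficients against the saddle-point computation confirms the identification $\gamma_0=1$ and pins down the normalisation of the $\gamma_n$.
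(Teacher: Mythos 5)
The paper does not actually prove this lemma: it is quoted as a standard fact with a pointer to \cite{stirling1}, \cite{stirling2}, so there is no internal argument to compare against. Your sketch is the classical textbook proof --- Laplace/saddle-point analysis of the Euler integral for the leading behaviour and the identification of the $\gamma_n$ via Watson's lemma, then Binet's formula for the full expansion with uniform error --- and in outline it is sound; the contour rotation correctly handles every closed subsector $|\arg z|\le \pi/2-\delta$, and exponentiating the $\log\Gamma$ expansion indeed recovers the multiplicative form stated.

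The one step that does not work as written is the uniform treatment of the boundary of the sector via Binet's second formula. The estimate $\bigl|\arctan w-\sum_{k<N}(-1)^k w^{2k+1}/(2k+1)\bigr|\ll|w|^{2N+1}$ is a Taylor-remainder bound valid only for $|w|$ bounded away from $1$; for $w=t/z$ it fails once $t\gg|z|$, and when $\re(z)=0$ the integrand $\arctan(t/z)$ even acquires a logarithmic branch-point singularity at $t=|z|$ (since $t/z$ then passes through $\mp i$). To repair this you must split the $t$-integral at, say, $t=|z|/2$, apply the Taylor remainder only on the initial range, and absorb the tail --- both the remainder there and the completion of the truncated Bernoulli integrals back to $(0,\infty)$ --- using the exponential decay of $(e^{2\pi t}-1)^{-1}$, which costs only $O(e^{-\pi|z|})$. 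Alternatively, the closed sector is handled directly by the standard Stirling-series remainder bound $|R_N(z)|\ll_N |z|^{-(2N-1)}\sec^{2N}\!\left(\frac{\arg z}{2}\right)$, whose angular factor is bounded by $2^{N}$ for $|\arg z|\le\pi/2$. With either repair the argument is complete.
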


Applying the Stirling's approximation, we get the following technical lemma.

\begin{lemma}\label{GammaRatio}
    Let $s=\sigma+i\tau$ with $\sigma, \tau\in\R$, $\nu>0$ such that $\nu+\sigma>0$. For any $N\geq1$, we have $$\frac{\Gamma\left(\nu+i\tau\right)}{\Gamma\left(\nu-i\tau\right)}=\left(\frac{\nu^2+\tau^2}{e^2}\right)^{i\tau}e\left(\frac{2\nu-1}{2\pi}\arctan\left(\frac{\tau}{\nu}\right)\right)W_{\nu,N}(\tau)+O\left(\nu^N+\tau^N\right),$$
    for some $1$-flat function $W_{\nu,N}$.
\end{lemma}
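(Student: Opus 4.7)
The plan is to apply Stirling's approximation (Lemma \ref{Stirling}) to both $\Gamma(\nu+i\tau)$ and $\Gamma(\nu-i\tau)$ separately and take the ratio. Setting $z=\nu+i\tau$ (so $\bar z = \nu-i\tau$), Stirling gives
\begin{align*}
    \Gamma(z)=\sqrt{2\pi}\,z^{z-1/2}e^{-z}\left(\sum_{n=0}^{N-1}(-1)^n\frac{\gamma_n}{z^n}+O(|z|^{-N})\right),
\end{align*}
with the analogous formula for $\Gamma(\bar z)$. The ratio therefore factors as a "phase" part coming from $z^{z-1/2}e^{-z}/\bar z^{\bar z-1/2}e^{-\bar z}$ and an "amplitude" part coming from the ratio of the two Stirling tails.

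First I would compute the phase explicitly. Using $\log z=\tfrac{1}{2}\log(\nu^2+\tau^2)+i\arctan(\tau/\nu)$, and noting that $\log\bar z$ is its complex conjugate, one obtains
\begin{align*}
    (z-\tfrac{1}{2})\log z-(\bar z-\tfrac{1}{2})\log\bar z
    =2i\,\Im\!\bigl((z-\tfrac{1}{2})\log z\bigr)
    =i\tau\log(\nu^2+\tau^2)+i(2\nu-1)\arctan(\tau/\nu).
\end{align*}
Multiplying by $e^{-z}/e^{-\bar z}=e^{-2i\tau}$ and exponentiating, the phase becomes
\begin{align*}
    \left(\frac{\nu^2+\tau^2}{e^2}\right)^{i\tau}e\!\left(\frac{(2\nu-1)\arctan(\tau/\nu)}{2\pi}\right),
\end{align*}
which is exactly the explicit factor appearing in the statement.

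Next, I would define $W_{\nu,N}(\tau)$ as the ratio of the two truncated asymptotic expansions in $1/z$ and $1/\bar z$, and verify it is $1$-flat. The leading term is $1$, and each successive term is a rational function of $\nu\pm i\tau$ of total degree $-n$. Differentiating such a rational function in $\tau$ does not worsen the decay, so bounds of the form $W_{\nu,N}^{(j)}(\tau)\ll_j 1$ follow term by term from a small induction on $j$ together with the quotient rule. The tail errors $O(|z|^{-N})=O((\nu^2+\tau^2)^{-N/2})$ from numerator and denominator combine, via the identity $(A+\varepsilon_1)/(B+\varepsilon_2)=A/B+O(\varepsilon_1+\varepsilon_2)$ for $B$ bounded away from zero, into the stated error term.

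The only real subtlety is checking that the denominator of the Stirling quotient is bounded away from zero uniformly in $\tau$, which holds because the leading term is $1$ and the correction is $O(|z|^{-1})$ with $|z|\ge\nu>0$; this step, together with the bookkeeping of derivatives for $1$-flatness, constitutes the main (albeit routine) technical obstacle. Assembling the phase factor with the amplitude $W_{\nu,N}$ and the combined tail error yields the claim.
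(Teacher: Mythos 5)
Your proof is the paper's proof spelled out in more detail: both apply Stirling to the numerator and denominator, compute the explicit phase via $(z-\tfrac12)\log z-(\bar z-\tfrac12)\log\bar z=i\tau\log(\nu^2+\tau^2)+i(2\nu-1)\arctan(\tau/\nu)$ together with $e^{\bar z-z}=e^{-2i\tau}$, and define $W_{\nu,N}$ as the quotient of the truncated Stirling tails (whose $1$-flatness the paper asserts and you justify). Your error bookkeeping yields $O\left((\nu^2+\tau^2)^{-N/2}\right)$, which is the correct Stirling error and, unlike the printed $O(\nu^N+\tau^N)$ (almost certainly a sign typo), actually decays — as it must for the uses in Lemma \ref{HugeIntegralAnalysis}, where the lemma is invoked with $|\tau|\gg P^\varepsilon$ to produce arbitrary power savings.
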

\begin{proof}
    Apply Stirling's approximation, we get for any $N\geq1$, \begin{align*}
        \frac{\Gamma\left(\nu+i\tau\right)}{\Gamma\left(\nu-i\tau\right)}=&\frac{\left(\nu+i\tau\right)^{\nu+i\tau-\frac{1}{2}}e^{-(\nu+i\tau)}}{\left(\nu-i\tau\right)^{\nu-i\tau-\frac{1}{2}}e^{-(\nu-i\tau)}}\frac{\displaystyle\sum_{n=0}^{N-1} \frac{(-1)^{n}\gamma_{n}}{(\nu+i\tau)^{n}}+O\left(|\nu+i\tau|^{-N}\right)}{\displaystyle\sum_{n=0}^{N-1} \frac{(-1)^{n}\gamma_{n}}{(\nu-i\tau)^{n}}+O\left(|\nu-i\tau|^{-N}\right)}\\
        =&\left(\frac{\nu^2+\tau^2}{e^2}\right)^{i\tau}e^{i(2\nu-1)\arctan\left(\frac{\tau}{\nu}\right)}W_{\nu,N}(\tau)+O\left(\nu^N+\tau^N\right),
    \end{align*}
    where \begin{align*}
        W_{\nu,N}(\tau)=\sum_{n=0}^{N-1} \frac{(-1)^{n}\gamma_{n}}{(\nu+i\tau)^{n}}\left(\sum_{n=0}^{N-1} \frac{(-1)^{n}\gamma_{n}}{(\nu-i\tau)^{n}}\right)^{-1}
    \end{align*}
    is $1$-flat.
\end{proof}

Finally we have the following technical lemma.
\begin{lemma}\label{GammaDiff}
    Let $A>0, B\in\R$ such that $\pm B>0$. Then \begin{align*}
        F^\pm(B):=\frac{\Gamma\left(\frac{3}{4}+iB\right)}{\Gamma\left(\frac{3}{4}-iB\right)}\mp i\frac{\Gamma\left(\frac{1}{4}+iB\right)}{\Gamma\left(\frac{1}{4}-iB\right)}\ll_A B^{-A}.
    \end{align*}
\end{lemma}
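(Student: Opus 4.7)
The plan is to collapse $F^\pm(B)$ into a single expression in which the appropriate exponential decay becomes manifest. The main tool is the reflection formula $\Gamma(s)\Gamma(1-s)=\pi/\sin(\pi s)$, applied with $s=\tfrac{1}{4}+iB$ and $s=\tfrac{3}{4}+iB$ respectively. This gives
\begin{align*}
\Gamma(\tfrac{3}{4}-iB) &= \frac{\pi}{\sin(\pi(\tfrac{1}{4}+iB))\,\Gamma(\tfrac{1}{4}+iB)},\\
\Gamma(\tfrac{1}{4}-iB) &= \frac{\pi}{\sin(\pi(\tfrac{3}{4}+iB))\,\Gamma(\tfrac{3}{4}+iB)},
\end{align*}
so that both ratios in the definition of $F^\pm(B)$ can be written with the common factor $\Gamma(\tfrac{1}{4}+iB)\Gamma(\tfrac{3}{4}+iB)$:
\begin{align*}
F^\pm(B) = \frac{\Gamma(\tfrac{1}{4}+iB)\Gamma(\tfrac{3}{4}+iB)}{\pi}\Bigl[\sin(\pi(\tfrac{1}{4}+iB))\mp i\sin(\pi(\tfrac{3}{4}+iB))\Bigr].
\end{align*}

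Next I would expand the sine functions using the addition formula and $\cos(i\pi B)=\cosh(\pi B)$, $\sin(i\pi B)=i\sinh(\pi B)$. A short calculation yields
\begin{align*}
\sin(\pi(\tfrac{1}{4}+iB))\mp i\sin(\pi(\tfrac{3}{4}+iB)) = \frac{\sqrt{2}}{2}(1\mp i)\bigl(\cosh(\pi B)\mp\sinh(\pi B)\bigr) = \frac{\sqrt{2}}{2}(1\mp i)\,e^{\mp\pi B},
\end{align*}
so that
\begin{align*}
F^\pm(B) = \frac{\sqrt{2}(1\mp i)}{2\pi}\,e^{\mp\pi B}\,\Gamma(\tfrac{1}{4}+iB)\Gamma(\tfrac{3}{4}+iB).
\end{align*}
This is the key identity; the sign convention built into $F^\pm$ has been precisely matched to produce the decaying exponential $e^{\mp\pi B}$ in the hypothesized range $\pm B>0$.

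Finally, the asymptotics for $|\Gamma(\sigma+i\tau)|$ from Stirling (Lemma \ref{Stirling}) give
$|\Gamma(\tfrac{1}{4}+iB)\Gamma(\tfrac{3}{4}+iB)|\ll e^{-\pi|B|}$, with a polynomial implied constant. Combining this with the previous display, one obtains $|F^\pm(B)|\ll e^{-\pi|B|-\pi(\pm B)} = e^{-2\pi|B|}$ whenever $\pm B>0$, which is much stronger than the claimed $B^{-A}$ bound. The only mild care needed is the treatment of small $|B|$, where the bound is trivial since the Gamma factors are holomorphic and the exponentials are bounded; one simply observes that $F^\pm(B)$ is continuous and that the $B^{-A}$ bound is already automatic for $|B|$ bounded away from $0$ by any fixed positive constant, and handles the remaining compact range $|B|\ll 1$ by boundedness. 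There is no substantive obstacle; the calculation is essentially a reshuffling via reflection followed by Stirling.
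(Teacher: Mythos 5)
Your proof is correct, and it takes essentially the same route as the paper: apply the reflection formula to convert the two Gamma ratios into a sum of sine terms, observe that the $\mp i$ combination collapses to a single exponential $e^{\mp\pi B}$, and then use Stirling to extract the remaining exponential decay. Your version is slightly cleaner in that it produces the closed-form identity $F^\pm(B)=\frac{\sqrt{2}(1\mp i)}{2\pi}e^{\mp\pi B}\Gamma(\tfrac14+iB)\Gamma(\tfrac34+iB)$ rather than leaving the Gamma factors and a product of exponentials in the denominator, but the underlying idea and key cancellation are the same.
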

\begin{proof}
    Apply the reflection formula \begin{align*}
        \Gamma(1-z) \Gamma(z)=\frac{\pi}{\sin (\pi z)}
    \end{align*}
    which holds for any $z\not\in\Z$, we have \begin{align*}
        F^\pm(B)=&\frac{\Gamma\left(\frac{3}{4}+iB\right)\Gamma\left(\frac{1}{4}-iB\right)\mp i\Gamma\left(\frac{1}{4}+iB\right)\Gamma\left(\frac{3}{4}-iB\right)}{\Gamma\left(\frac{3}{4}-iB\right)\Gamma\left(\frac{1}{4}-iB\right)}\\
        =&\pi\left(\Gamma\left(\frac{3}{4}-iB\right)\Gamma\left(\frac{1}{4}-iB\right)\right)^{-1}\left(\frac{1}{\sin\left(\pi\left(\frac{3}{4}+iB\right)\right)}\mp i \frac{1}{\sin\left(\pi\left(\frac{3}{4}-iB\right)\right)}\right)\\
        =&2\pi i\left(\Gamma\left(\frac{3}{4}-iB\right)\Gamma\left(\frac{1}{4}-iB\right)\left(e^{-B+\frac{3\pi i}{4}}-e^{B-\frac{3\pi i}{4}}\right)\left(e^{B+\frac{3\pi i}{4}}-e^{-B-\frac{3\pi i}{4}}\right)\right)^{-1}\\
        &\times \left(e^{B+\frac{3\pi i}{4}}-e^{-B-\frac{3\pi i}{4}}\mp ie^{-B+\frac{3\pi i}{4}}+\pm ie^{B-\frac{3\pi i}{4}}\right),
    \end{align*}
    and \begin{align*}
        &e^{B+\frac{3\pi i}{4}}-e^{-B-\frac{3\pi i}{4}}\mp ie^{-B+\frac{3\pi i}{4}}+\pm ie^{B-\frac{3\pi i}{4}}\\
        =&e^B\left(e^\frac{3\pi i}{4}\pm ie^{-\frac{3\pi i}{4}}\right)-e^{-B}\left(e^{-\frac{3\pi i}{4}}\pm i e^{\frac{3\pi i}{4}}\right)\\
        =&2e^{B+\frac{3\pi i}{4}}\delta\left(\pm=-\right)+2e^{-B+\frac{-3\pi i}{4}}\delta\left(\pm=+\right).
    \end{align*}
    This gives exponential decay as $\pm B>0$ and concludes the proof of Lemma since \begin{align*}
        \Gamma\left(\frac{3}{4}-iB\right)\Gamma\left(\frac{1}{4}-iB\right)\left(e^{-B+\frac{3\pi i}{4}}-e^{B-\frac{3\pi i}{4}}\right)\left(e^{B+\frac{3\pi i}{4}}-e^{-B-\frac{3\pi i}{4}}\right)\neq0
    \end{align*}
    as $B\neq0$.
\end{proof}

\section{Bessel Functions}

Here we collect the standard facts of Bessel functions, all of the facts listed here can be found in, for example, \cite{watsonBessel}.

Fix $s\in\C$, we have the asymptotic expansion of Bessel functions as follows. For any $x>0$, there exists $P^\varepsilon$-inert function $W^\pm$ such that \begin{align}\label{BesselAsymptotics}
    J_s(4\pi x)=&\sum_\pm\frac{1}{\sqrt{x}+1}e\left(\pm 2x\right)W^\pm(x).
\end{align}
This asymptotic expansion is usually written only when $x\gg P^\varepsilon$. However, by Lemma \ref{besselBoundsDerivatives} below, the Bessel functions are $P^\varepsilon$-inert when $x\ll P^\varepsilon$ and bounded by $O(1)$ when $s$ is fixed, justifying (\ref{BesselAsymptotics}) for all $x>0$.

For the derivative properties, we have the following lemma.
\begin{lemma}\label{besselBoundsDerivatives}
    Let $\nu\in\C$, then we have \begin{enumerate}
        \item \begin{align*}
            J_\nu'(x)=&\frac{1}{2}\left(J_{\nu-1}(x)-J_{\nu+1}(x)\right)
        \end{align*}
        \item For any $a>0, j\in\N$, \begin{align*}
            \left(\frac{2}{a}\right)^j\frac{d^j}{dx^j}\left(x^{\frac{\nu+j}{2}}J_{\nu+j}(a\sqrt{x})\right)=x^\frac{\nu}{2}J_\nu(a\sqrt{x}).
        \end{align*}
    \end{enumerate}
\end{lemma}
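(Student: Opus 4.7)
The plan is to derive both parts directly from the power-series definition
\[
J_\nu(x)=\sum_{k=0}^\infty\frac{(-1)^k}{k!\,\Gamma(\nu+k+1)}\left(\frac{x}{2}\right)^{\nu+2k},
\]
via the two fundamental consequences
\[
\frac{d}{dx}\bigl(x^{\nu}J_\nu(x)\bigr)=x^{\nu}J_{\nu-1}(x),\qquad \frac{d}{dx}\bigl(x^{-\nu}J_\nu(x)\bigr)=-x^{-\nu}J_{\nu+1}(x),
\]
each of which is obtained by differentiating the series termwise and reindexing the resulting sum (the first absorbs one extra factor of $x/2$ into each term to raise the $k$ index, the second does so to lower it).

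For part (1), I would expand the two displayed identities using the product rule to obtain $J_\nu'(x)+(\nu/x)J_\nu(x)=J_{\nu-1}(x)$ and $J_\nu'(x)-(\nu/x)J_\nu(x)=-J_{\nu+1}(x)$; subtracting these eliminates the $J_\nu$ term and yields $J_\nu'(x)=\frac{1}{2}\bigl(J_{\nu-1}(x)-J_{\nu+1}(x)\bigr)$.

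For part (2), I would argue by induction on $j$. For the base case $j=1$, substitute $u=a\sqrt{x}$ into the first fundamental identity above (with $\nu$ replaced by $\nu+1$), apply the chain rule $\frac{du}{dx}=\frac{a}{2\sqrt{x}}$, and cancel the overall factor $a^{\nu+1}$ coming from $u^{\nu+1}=a^{\nu+1}x^{(\nu+1)/2}$; after simplification this produces $\frac{d}{dx}\bigl(x^{(\nu+1)/2}J_{\nu+1}(a\sqrt{x})\bigr)=\frac{a}{2}\,x^{\nu/2}J_\nu(a\sqrt{x})$, which is the $j=1$ statement once the factor $2/a$ is cleared. For the inductive step, peel off one outer derivative, apply the base case with $\nu$ shifted to $\nu+j-1$ to convert that differentiation into a lowering of the Bessel index by one unit at the cost of a factor $a/2$, and then invoke the inductive hypothesis with parameter $j-1$ and index $\nu+j-1$; after $j$ such reductions the constant accumulates to $(a/2)^j$ and the Bessel index drops from $\nu+j$ down to $\nu$, yielding the claimed identity.

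The only obstacle here is routine bookkeeping: one must carefully track the powers of $a$ and of $x$ under the substitution $u=a\sqrt{x}$ in the base case, since a misplaced factor $a^{\nu}$ or $\sqrt{x}$ would propagate through the induction. Once the $j=1$ computation is verified, the induction is essentially automatic.
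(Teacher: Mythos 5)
Your proof is correct and rests on the same fundamental recurrence $\left(y^\nu J_\nu(y)\right)'=y^\nu J_{\nu-1}(y)$ that the paper's one-line proof cites for part (2), with part (1) handled by the standard companion identity and part (2) by a straightforward induction that the paper leaves implicit. The only minor slip is in wording: in the inductive step you say you "peel off one outer derivative," but the reduction you then describe (apply the $j=1$ case first to lower the Bessel index from $\nu+j$ to $\nu+j-1$, then invoke the hypothesis on the remaining $j-1$ derivatives) actually corresponds to peeling the innermost derivative --- both orderings are valid, so the argument stands.
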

\begin{proof}
    To get (2), use the relation \begin{align*}
        \left(y^\nu J_\nu(y)\right)'=&y^\nu J_{\nu-1}(y).
    \end{align*}
\end{proof}

\section{Oscillatory Integrals}

The first tool to study oscillatory integrals is stationary phase analysis as shown in the following lemma, which is Lemma 3.1 in \cite{inert}.

\begin{lemma}\label{stat}
    Suppose that $w$ is a $X$-inert function compact support on $[Z,2Z]$, so that $w^{(j)}(t)\ll \left(Z/X\right)^{-j}$. Also,suppose that $\phi$ is real-valued, smooth and satisfies $\phi^{(j)}(t)\ll\frac{Y}{Z^j}$ for some $\frac{Y}{X^2}\geq R\geq1$ and all $t$ in the support of $w$. Let \begin{align*}
        I=\int_{-\infty}^\infty w(t)e^{i\phi(t)}dt.
    \end{align*}
    \begin{enumerate}
        \item If $|\phi'(t)|\gg\frac{Y}{Z}$ for all $t$ in the support of $w$, then $I\ll_A ZR^{-A}$ for any $A>0$.
        \item If $\phi''(t)\gg\frac{Y}{Z^2}$ for all $t$ in the support of $w$, and there exists $t_0\in\R$ such that $\phi'(t_0)=0$, then \begin{align*}
            I=\frac{e^{i \phi\left(t_{0}\right)}}{\sqrt{\phi^{\prime \prime}\left(t_{0}\right)}} F\left(t_{0}\right)+O_{A}\left(Z R^{-A}\right)
        \end{align*}
        where $F$ is a $X$-inert function (depending on $A$) with the same support as $w$.
    \end{enumerate}
\end{lemma}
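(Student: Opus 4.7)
The plan is to address Part (1) by integration by parts and Part (2) by reducing to a canonical Gaussian form and combining a near/far split with a Parseval-type identity, both after rescaling to eliminate the parameters $Y$ and $Z$.

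For Part (1), I would first rescale by substituting $s = t/Z$, yielding $I = Z\int \tilde w(s)e^{iY\tilde\phi(s)}ds$ with $\tilde w$ supported on $[1,2]$, $\tilde w^{(j)} \ll X^j$, $\tilde\phi^{(j)} \ll 1$, and $|\tilde\phi'|\gg 1$. Iterating integration by parts via the operator $L^*f = -(f/(iY\tilde\phi'))'$ yields
\begin{align*}
I = Z\int (L^*)^A \tilde w(s) \cdot e^{iY\tilde\phi(s)}\,ds.
\end{align*}
An induction on $A$ shows that $(L^*)^A \tilde w$ is a finite linear combination of products of $\tilde w^{(j)}$ with derivatives of $1/\tilde\phi'$, and is bounded in magnitude by a constant multiple of $X^A/Y^A$ (each iteration of $L^*$ costs at most one new derivative of $\tilde w$, contributing a factor of $X$, while derivatives of $1/\tilde\phi'$ stay $O(1)$ because all derivatives of $\tilde\phi$ are $O(1)$ and $|\tilde\phi'|\gg 1$). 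Hence $I \ll Z(X/Y)^A \leq Z/R^A$, using $Y \geq X^2 R$ together with $X \geq 1$, which we may assume without loss of generality (if $X<1$ then $(X/Z)^j \leq (1/Z)^j$, so we can replace $X$ by $1$).

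For Part (2), I would first reduce to the case $\phi''(t_0)>0$ by replacing $\phi$ with $-\phi$ if necessary (conjugating $I$) and substitute $u = \sqrt{\phi''(t_0)}(t-t_0)$. Writing $\phi(t) - \phi(t_0) = u^2/2 + g(u)$, one checks that $g$ vanishes to third order at $u=0$ with $g^{(k)}(u) \ll Y^{1-k/2}$ for $k \geq 2$, and the rescaled weight $\tilde w(u) = w(t_0 + u/\sqrt{\phi''(t_0)})$ is $X$-inert and supported in an interval of length $\asymp \sqrt{Y}$. The integral becomes
\begin{align*}
I = \frac{e^{i\phi(t_0)}}{\sqrt{\phi''(t_0)}}\int \tilde w(u)e^{i(u^2/2 + g(u))}\,du.
\end{align*}
To extract the leading term, I would split the $u$-integral into a near region $|u| < U_0$ and a far region $|u| \geq U_0$, with $U_0$ chosen so that $g(u)$ is small on the near region. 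The far region is handled by Part (1) applied to the phase $H(u) = u^2/2 + g(u)$, whose derivative $u + g'(u)$ is comparable to $|u| \geq U_0$ since $g'(u) \ll u^2/\sqrt{Y}$. On the near region, I would expand $e^{ig(u)}$ as a finite Taylor polynomial and integrate each resulting monomial against $\tilde w(u)e^{iu^2/2}$; these integrals can be evaluated via Plancherel against the Gaussian $e^{iu^2/2}$, whose Fourier transform is the explicit Gaussian $\sqrt{2\pi}e^{i\pi/4}e^{-i\xi^2/2}$. The leading term of this expansion defines $F(t_0)$, and all subsequent terms together with the far-region contribution are $O(ZR^{-A})$.

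The main obstacle I anticipate is twofold. First, for large $R$ approaching the boundary $R = Y/X^2$, a single near/far split may not suffice and one must iterate the argument, each iteration peeling off another order in the asymptotic expansion until the remainder reaches $O(ZR^{-A})$. Second, verifying that the collected output $F$ is genuinely $X$-inert as a function of $t_0$ requires careful bookkeeping, since $F$ depends on $t_0$ through $\phi''(t_0)$ and through the higher Taylor coefficients $\phi^{(j)}(t_0)$ appearing in $g$. Each derivative of $\phi^{(j)}(t_0)$ with respect to $t_0$ costs $\ll 1/Z$; since $X \geq 1$ gives $1/Z \leq X/Z$, the inertness budget is respected, but the counting must track every chain-rule contribution carefully to conclude that $F$ is $X$-inert with support contained in that of $w$.
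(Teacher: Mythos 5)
The paper does not actually prove this lemma: it is imported verbatim from \cite[Lemma 3.1]{inert}, which in turn restates \cite[Prop.\ 8.2]{BKY}, and the only original content in the paper is the remark that the support of $F$ can be taken to be that of $w$. So there is no in-paper argument to compare against; what you have written is essentially the standard proof underlying the cited result. Your Part (1) (rescale to $s=t/Z$, iterate $L^*f=-(f/(iY\tilde\phi'))'$, bound $(L^*)^A\tilde w\ll (X/Y)^A$) is exactly the usual non-stationary-phase estimate, and your Part (2) (reduce to $e^{i(u^2/2+g(u))}$ with $g^{(k)}\ll Y^{1-k/2}$, near/far split, Taylor-expand $e^{ig}$, evaluate against the Gaussian by Parseval using $\widehat{e^{iu^2/2}}(\xi)=\sqrt{2\pi}e^{i\pi/4}e^{-i\xi^2/2}$) is the same mechanism used in \cite{BKY}; the asymptotic expansion it produces is what makes $F$ computable and $X$-inert.

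Three points in your sketch need more care if it is to be a proof rather than an outline. First, in the far region you assert $u+g'(u)\asymp|u|$ from $g'(u)\ll u^2/\sqrt{Y}$; since $|u|$ ranges up to $\asymp\sqrt{Y}$ on the support, the ratio $|g'(u)|/|u|$ is only $O(1)$ with an unspecified implied constant, so you should instead argue via $H''(u)=\phi''(t)/\phi''(t_0)\asymp 1$ (which follows from the hypothesis $\phi''\gg Y/Z^2$ together with $\phi''\ll Y/Z^2$), or first localize to $|t-t_0|\leq cZ$ using Part (1); relatedly, the hypotheses do not place $t_0$ inside $[Z,2Z]$, and the case $t_0\notin\mathrm{supp}(w)$ has to be disposed of separately (there Part (1) applies and the claimed main term must be shown to be absorbable into the error). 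Second, your reduction to $X\geq1$ is circular as stated: replacing $X$ by $1$ strengthens the inertness hypothesis but weakens the constraint $Y/X^2\geq R$ to $Y\geq R$, which does not follow; one either assumes $X\geq1$ (as is implicit in the inert-function framework and always true here, $X=P^\varepsilon$) or compensates by taking more integrations by parts. Third, as you yourself flag, the genuine content of Part (2) is that the full expansion can be packaged as an $X$-inert function $F(t_0)$ uniformly in $A$; your chain-rule budget $\partial_{t_0}\phi^{(j)}(t_0)\ll Y/Z^{j+1}$ is the right observation, but carrying it through every term of the expansion is precisely the bookkeeping done in \cite[\S 8]{BKY}, and a complete write-up would either do that bookkeeping or simply cite it, as the paper does.
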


\begin{remark}
    In the statement written in \cite[Lemma 3.1]{inert}, $F$ is only supported on $t_0\sim Z$. However, that lemma is a restatement of \cite[Prop. 8.2]{BKY} and one can check its proof to see $F$ is actually supported in the support of $w$.
\end{remark}

Moreover, we have the following simple second derivative bound with fewer conditions by \cite[Lemma 5]{2ndDerBound}: Let $\phi$ be a real-valued function such that $\phi''(t)\gg R$ for all $t\in [a,b]$. Then \begin{align*}
    \int_a^b e(\phi(t))dt\ll\frac{1}{\sqrt{r}}.
\end{align*}
Together with integration by parts we get the following lemma.

\begin{lemma}\label{2ndDerB}
    Let $w$ be an $X$-inert function compactly supported on $[a,b]$ and $\phi$ be a real-valued function such that $\phi''(t)\gg R$ for $t\in [a,b]$. Then \begin{align*}
        \int_\R w(t)e(\phi(t))dt\ll\frac{\int_\R|w'(t)|dt}{\sqrt{R}}.
    \end{align*}
\end{lemma}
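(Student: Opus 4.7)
The plan is to combine the weight-free second derivative bound quoted immediately before the statement with a single integration by parts that moves the weight $w$ onto $w'$. Set
\begin{align*}
W(t)=\int_{a}^{t}e(\phi(u))\,du.
\end{align*}
Since $w$ vanishes outside $[a,b]$, integration by parts gives
\begin{align*}
\int_{\R}w(t)e(\phi(t))\,dt=w(b)W(b)-\int_{a}^{b}w'(t)W(t)\,dt=-\int_{a}^{b}w'(t)W(t)\,dt.
\end{align*}
The boundary term disappears by the compact-support hypothesis on $w$ (or by taking $a,b$ slightly outside the support).

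Next I would invoke the quoted van der Corput estimate from \cite{2ndDerBound}: because $\phi''(u)\gg R$ throughout $[a,b]$, the same bound applies to any subinterval $[a,t]\subseteq[a,b]$, yielding
\begin{align*}
|W(t)|=\Bigl|\int_{a}^{t}e(\phi(u))\,du\Bigr|\ll\frac{1}{\sqrt{R}}
\end{align*}
with an implicit constant independent of $t\in[a,b]$. Plugging this uniform bound into the integration-by-parts identity gives
\begin{align*}
\Bigl|\int_{\R}w(t)e(\phi(t))\,dt\Bigr|\le\int_{a}^{b}|w'(t)|\,|W(t)|\,dt\ll\frac{1}{\sqrt{R}}\int_{\R}|w'(t)|\,dt,
\end{align*}
which is exactly the claimed inequality.

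There is no real obstacle here; the $X$-inertness of $w$ is not used in the bound itself (it only guarantees that $w'$ is integrable with $\int|w'|\ll X(b-a)/(b-a)$ type estimates when one wishes to plug in later), and the hypothesis that $\phi$ is real-valued and $\phi''\gg R$ on the whole support is exactly what the cited lemma needs. The only minor point worth double-checking is that the van der Corput bound of \cite{2ndDerBound} is stated uniformly in the endpoints, so the implicit constant in $|W(t)|\ll R^{-1/2}$ does not depend on $t$; this is standard and follows from the usual proof via the substitution $u\mapsto\phi'(u)$ combined with a monotonicity argument on $\phi'$, since $\phi''\gg R>0$ forces $\phi'$ to be strictly monotone on $[a,b]$.
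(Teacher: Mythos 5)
Your argument is correct and is exactly the route the paper intends: the paper simply cites the second-derivative bound of \cite{2ndDerBound} and says ``together with integration by parts we get the following lemma,'' and your write-up supplies precisely those details (uniform bound on $W(t)=\int_a^t e(\phi(u))\,du$ over subintervals, then one integration by parts using the compact support of $w$).
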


We have the following technical lemma to tackle the oscillatory integrals we encounter.

\begin{lemma}\label{HugeIntegralAnalysis}
    Let $A,C>0$, $B\in\R$ and let $V\in C_c^\infty(\R)$ be a $P^\varepsilon$-inert function supported on $[1,2]$. Let \begin{align*}
        G_k^\pm(s)=\left(\frac{\Gamma\left(\frac{2+s}{2}\right)}{\Gamma\left(\frac{1-s}{2}\right)}\mp i\frac{\Gamma\left(\frac{1+s}{2}\right)}{\Gamma\left(-\frac{s}{2}\right)}\right)\frac{\Gamma\left(\frac{k+s}{2}\right)\Gamma\left(\frac{k+1+s}{2}\right)}{\Gamma\left(\frac{k-s-1}{2}\right)\Gamma\left(\frac{k-s}{2}\right)},
    \end{align*}
    and define \begin{align*}
        I(B,C)=\frac{1}{2\pi i}\int_{\left(-\frac{1}{2}\right)}\left(\pi^{-3}C\right)^sG_k^\pm\left(s\right)\int_0^\infty V(y)e\left(B\sqrt{y}\right)y^{-s-1}dyds.
    \end{align*}
    For $B\ll P^\varepsilon$, there exists a $P^\varepsilon$-inert function $V_0^\pm(B,C)$ such that \begin{align*}
        I(B,C)=\frac{V_0^\pm(B,C)}{\sqrt{C}}\delta\left(C\gg k^{-2}P^{-\varepsilon}\right)+O\left(P^{-A}\right).
    \end{align*}
    For $B\gg P^\varepsilon$, there exists functions $W_{1,B,C}^\pm(t), W_{2,B,C}^\pm(t)$ supported on $[1,2]$, $P^\varepsilon$-inert in $B,C,t$ such that for $P^\varepsilon\ll B\ll kP^{-\varepsilon}$, \begin{align*}
        I(B,C)=&C^{-1/2}W_{1,B,C}^\pm\left(\frac{|B|Ck^2}{8\pi}\right)\\
        &\times e\left(\pm\frac{2}{B^2C}\left(1-\sqrt{1-\left(\frac{B^2Ck}{4\pi}\right)^2}\right)\mp\frac{k}{\pi}\arctan\left(\frac{4\pi}{B^2Ck}\left(1-\sqrt{1-\left(\frac{B^2Ck}{4\pi}\right)^2}\right)\right)\right)+O\left(P^{-A}\right),
    \end{align*}
    and for $B\gg kP^\varepsilon$, \begin{align*}
        I(B,C)=&C^{-1/2}W_{2,B,C}^\pm\left(\frac{4\pi}{|B|^3C}\right)\\
        &\times e\left(\pm\frac{2}{B^2C}\left(1+\sqrt{1-\left(\frac{B^2Ck}{4\pi}\right)^2}\right)\mp\frac{k}{\pi}\arctan\left(\frac{4\pi}{B^2Ck}\left(1+\sqrt{1-\left(\frac{B^2Ck}{4\pi}\right)^2}\right)\right)\right)+O\left(P^{-A}\right).
    \end{align*}
    For $kP^{-\varepsilon}\ll B\ll kP^\varepsilon$, there exists functions $W_{0,B,C}^\pm(t)$ supported on $(-1,1)$, $W_{0,B,C,T}^\pm(t), W_{0,B,C,-T}^\pm(t)$ supported on $\left(\frac{k}{|B|}-\frac{1}{T},\frac{k}{|B|}+\frac{1}{T}\right)$, $TP^\varepsilon$-inert in $t$ and $P^\varepsilon$-inert in $B,C$ such that
    \begin{align}
        I(B,C)=\delta\left(k^{-3}P^{-\varepsilon}\ll C\ll k^{-3}P^\varepsilon\right)\left(I_0(B,C)+\sum_{P^{-\varepsilon}\leq T\leq k^\frac{1}{3}P^{-\varepsilon} \text{Dyadic }}\left(I_T(B,C)+I_{-T}(B,C)\right)\right)+O\left(P^{-A}\right),
    \end{align}
    where \begin{align*}
        I_0(B,C)=&k^2\int_0^\infty W_{0,B,C}^\pm\left(k^\frac{1}{3}P^{-\varepsilon}\left(t-\frac{k}{|B|}\right)\right)e\left(\mp\frac{Bt}{2\pi}\log\left(\frac{|B|C(k^2+B^2t^2)}{8\pi et}\right)\mp\frac{k}{\pi}\arctan\left(\frac{Bt}{k}\right)\right)dt
    \end{align*}
    and for $\sigma=\pm1$, \begin{align*}
        I_{\sigma T}(B,C)=&k^\frac{3}{2}\sqrt{T}W_{0,B,C,\sigma T}^\pm\left(\frac{4\pi}{|B|^3C}\left(1+\sigma\sqrt{1-\left(\frac{B^2Ck}{4\pi}\right)^2}\right)\right)\\
        &\times e\left(\pm\frac{2}{B^2C}\left(1+\sigma\sqrt{1-\left(\frac{B^2Ck}{4\pi}\right)^2}\right)\mp\frac{k}{\pi}\arctan\left(\frac{4\pi}{B^2Ck}\left(1+\sigma\sqrt{1-\left(\frac{B^2Ck}{4\pi}\right)^2}\right)\right)\right).
    \end{align*}
\end{lemma}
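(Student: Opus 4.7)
The proof proceeds by a two-stage stationary phase analysis. Parametrize $s = -\frac{1}{2} + i\tau$ and substitute $y = u^2$ in the inner integral so that $y^{-s-1}\,dy = 2u^{-2i\tau}\,du$, giving
\begin{align*}
    I(B,C) = \frac{C^{-1/2}}{\pi}\int_\R (\pi^{-3}C)^{i\tau}\, G_k^\pm\!\left(-\frac{1}{2}+i\tau\right)\int_0^\infty V(u^2)\,u^{-2i\tau}\,e(Bu)\,du\,d\tau.
\end{align*}
First apply Stirling's approximation (Lemma~\ref{Stirling} together with Lemma~\ref{GammaRatio}) to the four gamma ratios in $G_k^\pm(-1/2+i\tau)$. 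The GL(2)-type prefactor is precisely $F^\pm(\tau/2)$ in the notation of Lemma~\ref{GammaDiff}, giving exponential decay on one half of the $\tau$-axis (selected by the sign $\pm$); on the non-decaying half it contributes magnitude $|\tau|^{1/2}$ times a tame oscillation of phase $O(\log|\tau|)$. The GL(3) ratio is a quotient of conjugate-pair gammas after matching the real parts $(k\pm\frac{1}{2})/2$, so Lemma~\ref{GammaRatio} yields a unimodular oscillatory factor with phase $\phi_k(\tau) = \frac{\tau}{\pi}\log\!\big(\frac{k^2+\tau^2}{4e^2}\big) + \frac{k}{\pi}\arctan(\tau/k) + O(1)$.

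Next, handle the inner $u$-integral separately in each regime of $B$. When $|B|\ll P^\varepsilon$ the integrand $V(u^2)e(Bu)$ is $P^\varepsilon$-inert, and repeated integration by parts in $u$ gives arbitrary saving in the $\tau$-integral unless $|\tau|\ll P^\varepsilon$; the remaining compact $\tau$-integral is then analyzed directly, yielding $V_0^\pm(B,C)C^{-1/2}$. The constraint $C\gg k^{-2}P^{-\varepsilon}$ enters because otherwise $\phi_k'(0)\asymp \log k$ combined with $\log C$ from $C^{i\tau}$ lets integration by parts in $\tau$ achieve arbitrary saving. When $|B|\gg P^\varepsilon$, the $u$-phase $2\pi Bu - 2\tau\log u$ has stationary point $u_0=\tau/(\pi B)$ with second derivative of size $\pi^2B^2/\tau$, so Lemma~\ref{stat}(1) forces $\tau\sim\pi B$ and Lemma~\ref{stat}(2) reduces the $u$-integral to $|B|^{-1/2}$ times a $P^\varepsilon$-inert weight multiplied by $e\big((\tau-\tau\log(\tau/(\pi B)))/\pi\big)$. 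Combining with the remaining Stirling phase and the $C^{i\tau}$ factor, the total phase $\Psi(\tau)$ of the resulting single $\tau$-integral has a saddle equation $\Psi'(\tau)=0$ which, after elementary manipulation, is quadratic in $\tau^2$ with roots producing exactly the factor $1\pm\sqrt{1-(B^2Ck/(4\pi))^2}$ seen in the statement. For $|B|\ll kP^{-\varepsilon}$ only the smaller root $\tau_-$ lies in the admissible range $\tau\sim\pi B$ (yielding $W_{1,B,C}^\pm$); for $|B|\gg kP^\varepsilon$ only the larger $\tau_+$ does (yielding $W_{2,B,C}^\pm$). Applying Lemma~\ref{stat}(2) a second time then produces the main term, with the $\mp\frac{k}{\pi}\arctan(\cdots)$ piece emerging from $\phi_k$ evaluated at the saddle.

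\textbf{Main obstacle.} The principal difficulty is the transitional regime $kP^{-\varepsilon}\ll|B|\ll kP^\varepsilon$, where $\tau_\pm$ coalesce and $\Psi''(\tau_\pm)$ degenerates (Airy-type behavior). My plan is a dyadic decomposition of a neighborhood of the coalesced saddle $\tau\sim\pi B\sim\pi k$ indexed by a parameter $T\in[P^{-\varepsilon},\,k^{1/3}P^{-\varepsilon}]$: on the piece $|\tau-\tau_\pm|\asymp \sqrt{k}/T$ one has $\Psi''(\tau)\asymp T^2/k$, just large enough for Lemma~\ref{stat}(2) to apply with a $TP^\varepsilon$-inert weight and produce the $I_{\sigma T}$ contributions. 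A central piece of scale $k^{-1/3}P^{-\varepsilon}$ around the coalesced saddle is handled instead by Lemma~\ref{2ndDerB} applied to the cubic Taylor approximation of the phase, yielding the $I_0$ contribution. The cutoff $T\le k^{1/3}P^{-\varepsilon}$ is forced by the scale at which the cubic Airy correction $\Psi'''(\tau_0)(\tau-\tau_0)^3$ overtakes the quadratic, so that a single integration by parts in $\tau$ gives arbitrary savings beyond it. Tracking the inert dependence of the cutoff weights $W_{0,B,C,\pm T}^\pm(t)$ in all three parameters $B,C,T$ (being $P^\varepsilon$-inert in $B,C$ and $TP^\varepsilon$-inert in $t$) while verifying the precise support conditions is the main bookkeeping burden.
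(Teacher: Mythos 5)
Your plan follows the paper's proof closely: write $s=-\frac12+i\tau$, do stationary phase on the inner ($y$- or $u$-)integral to localize $|\tau|\sim|B|$ and peel off the saddle value $e(\tau\pi^{-1}\log(\pi e B/\tau))$ with amplitude $|B|^{-1/2}$, apply Lemma~\ref{GammaDiff} to kill one half-line in $\tau$ and Lemma~\ref{GammaRatio} to extract the GL(3) phase $\mp\frac{k}{\pi}\arctan(\cdot)$, then carry out a second stationary phase in $\tau$ whose saddle equation has the two roots $1\pm\sqrt{1-(B^2Ck/(4\pi))^2}$, and finally handle the transitional regime by a dyadic partition around the degenerate critical point. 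This is exactly the paper's two-step argument.

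That said, several of your quantitative claims in the transitional regime are inconsistent and need correction before the argument closes. First, the GL(2) prefactor $F^{\pm}(\tau/2)$ is a difference of conjugate-pair Gamma ratios, hence uniformly $O(1)$ on the non-decaying half-line — not $|\tau|^{1/2}$; the $C^{-1/2}$ amplitude comes solely from $(\pi^{-3}C)^{-1/2}$ combined with the two stationary-phase factors $|B|^{-1/2}$ and $g''(t_0)^{-1/2}\asymp|B|^{-1/2}$, and your growth estimate would produce the wrong overall size of $I(B,C)$. Second, in the normalized variable $t=\tau/B\sim1$ used by the paper, $g''(t)\sim|B|t-k$, so on the dyadic shell $|t-k/|B||\sim 1/T$ one has $g''\sim k/T$ (equivalently, un-normalized, $|\tau-\tau_0|\sim k/T$ and $\Psi''\sim 1/(kT)$); your claimed scales $|\tau-\tau_\pm|\asymp\sqrt{k}/T$ and $\Psi''\asymp T^2/k$ do not reproduce this. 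In particular, plugging your numbers into stationary phase gives an amplitude off by a factor of order $T^{3/2}$ from the $k^{3/2}\sqrt{T}$ that the lemma asserts for $I_{\sigma T}$, so the decomposition would fail to match the stated main terms. Third, the cutoff $T\le k^{1/3}P^{-\varepsilon}$ comes from the inertness requirement $Y/X^2\ge 1$ in Lemma~\ref{stat}(2) with $X=TP^\varepsilon$, namely $g''\gg T^2P^\varepsilon$, i.e.\ $T^3\ll k$, rather than from a ``cubic overtaking quadratic'' comparison. Finally, for the central piece the paper simply defines $I_0$ as an unevaluated integral supported on $|t-k/|B||\ll k^{-1/3}P^\varepsilon$; no application of Lemma~\ref{2ndDerB} is made at the level of this lemma (that enters only downstream, in Lemma~\ref{HugeIntegralAnalysisTransit}), so bounding $I_0$ here is not needed and would change the form of the output.
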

\begin{proof}
    We start by analysing the $y$-integral. Writing $s=-\frac{1}{2}+it$, we have \begin{align*}
        \int_0^\infty V(y)e\left(B\sqrt{y}\right)y^{-s-1}dy=\int_0^\infty V(y)y^{-1/2}e(f(y))dy,
    \end{align*}
    where \begin{align*}
        f(y)=B\sqrt{y}-\frac{t}{2\pi}\log(y).
    \end{align*}
    Differentiating, we get \begin{align*}
        f'(y)=&\frac{B}{2\sqrt{y}}-\frac{t}{2\pi y}\\
        f''(y)=&-\frac{B}{4y^\frac{3}{2}}+\frac{t}{2\pi y^2}
    \end{align*}
    and for $y\sim1$, $j\geq2$, \begin{align*}
        f^{(j)}(y)\ll |B|+|t|.
    \end{align*}
    By (1) in Lemma \ref{stat}, we get arbitrary saving unless $|f'(y)|\ll \sqrt{|B|+|t|}P^\varepsilon$ for some $1\leq y\leq 2$. Hence we get arbitrary saving unless $|t|\ll |B|P^\varepsilon$. For $B\ll P^\varepsilon$, we get arbitrary saving unless $|t|\ll P^\varepsilon$. Hence we can freely change the order of integration and shift the contour to the right. This together with (1) in Lemma \ref{GammaRatio} yields for any $A\geq0$, \begin{align*}
        I(B,C)\ll_A \left(Ck^2\right)^A+P^{-A}.
    \end{align*}
    By taking $A$ to be sufficiently big, we get arbitrary saving unless $C\gg k^{-2}P^{-\varepsilon}$. Finally, shifting back to $-\frac{1}{2}$ and a change of variable gives us the desired result for the case $B\ll P^\varepsilon$, \begin{align*}
        I(B,C)=\frac{V_0^\pm(B,C)}{\sqrt{C}}\delta\left(C\gg k^{-2}P^{-\varepsilon}\right)+O\left(P^{-A}\right),
    \end{align*}
    where \begin{align*}
        V_0^\pm(B,C)=\frac{\sqrt{\pi}}{2}\left(1-U\left(Ck^2P^\varepsilon\right)\right)\int_{|t|\ll P^\varepsilon}\left(\pi^{-3}C\right)^{it}G_k^\pm\left(-\frac{1}{2}+it\right)\int_0^\infty V(y)e\left(B\sqrt{y}\right)y^{-\frac{1}{2}-it}dydt,
    \end{align*}
    with $U\in C_c^\infty(\R)$ is a fixed function with $U(x)=1$ for $-1\leq x\leq 1$. Then $V_1$ is a $P^\varepsilon$-inert function when $B\ll P^\varepsilon$.
    
    For $|B|\gg P^\varepsilon$, we have to analyse further. The proof is divided into 2 steps. Choose $\sigma=-\frac{1}{2}$ and write $t=\im(s)$.

    \textbf{Step 1}: Continuation of analysis on $y$-integral.
    
    As mentioned above, (1) in Lemma \ref{stat} gives arbitrary saving unless $|f'(y)|\ll \sqrt{|B|+|t|}P^\varepsilon$ for some $1\leq y\leq 2$. With the cost of negligible error, we focus on the case $|f'(y_1)|\ll \sqrt{|B|+|t|}P^\varepsilon$ for some $1\leq y\leq 2$. In such case, we have $Bt\geq 0, B=\frac{t}{\pi\sqrt{y_1}}+O\left(\sqrt{B}\right)$, and hence \begin{align*}
        f''(y)=&-\frac{B}{4y^\frac{3}{2}}+\frac{t}{2\pi y^2}=\frac{t}{2\pi y^\frac{3}{2}}\left(-\frac{1}{2\sqrt{y_1}}+\frac{1}{\sqrt{y}}\right)\\
        =&\frac{t(4y_1-y)}{4\pi y^2\sqrt{y_1}(2\sqrt{y_1}+\sqrt{y})}\sim B.
    \end{align*}
    Here we used $1\leq y_1,y\leq 2$ to show $4y_1-y\sim 1$. Now explicit computation shows \begin{align*}
        y_0=\frac{t^2}{\pi^2B^2} \hspace{1cm} \text{ and }\hspace{1cm} f(y_0)=\frac{t}{\pi}\log\frac{\pi eB}{t}
    \end{align*}
    satisfies $f'(y_0)=0$. Hence by (2) in Lemma \ref{stat}, for any $A>0$, there exists some $P^\varepsilon$-inert function $V_2$ supported on $[1,2]$ such that \begin{align*}
        \int_0^\infty V(y)y^{-1/2}e\left(B\sqrt{y}\right)y^{-it}dy=f''(y_0)^{-\frac{1}{2}}V_2\left(\frac{t^2}{\pi^2B^2}\right)e(f(y_0))+O\left(P^{-A}\right).
    \end{align*}
    Here we incorporated the case $|f'(y)|\gg \sqrt{|B|+|t|}P^\varepsilon$ for all $1\leq y\leq 2$ with the cost of negligible error. Defining \begin{align*}
        V_B\left(\frac{t^2}{B^2}\right)=\sqrt{\frac{B}{f''(y_0)}}V_2\left(\frac{t^2}{\pi^2B^2}\right),
    \end{align*}
    we get $V_B(t)$ is a function supported on $1\leq t\leq 2$, $P^\varepsilon$-inert on $t$ and $B$, and \begin{align}\label{conclusion2}
        \int_0^\infty V(y)y^{-1/2}e\left(B\sqrt{y}\right)y^{-it}dy=B^{-1/2}V_B\left(\frac{t^2}{B^2}\right)e(f(y_0))+O\left(P^{-A}\right).
    \end{align}
    
    \textbf{Step 2}: Analysis of the $t$-integral.
    
    Inserting (\ref{conclusion2}) into $I$, changing the order of integration and a change of variable yields \begin{align*}
        I(B,C)=&\frac{1}{2\pi\sqrt{B}}\int_\R V_B\left(\frac{t^2}{B^2}\right)\left(\pi^{-3}C\right)^{-\frac{1}{2}+it}\left(\frac{\pi e B}{t}\right)^{2it}G_k^\pm\left(-\frac{1}{2}+it\right)dt+O\left(P^{-A}\right)\\
        =&\frac{\sqrt{\pi B}}{2\sqrt{C}}\int_\R V_B\left(t^2\right)\left(\frac{e^2 C}{\pi t^2}\right)^{iBt}G_k^\pm\left(-\frac{1}{2}+iBt\right)dt+O\left(P^{-A}\right).
    \end{align*}
    Recall that \begin{align*}
        G_k^\pm\left(-\frac{1}{2}+iBt\right)=\left(\frac{\Gamma\left(\frac{3}{4}+\frac{iBt}{2}\right)}{\Gamma\left(\frac{3}{4}-\frac{iBt}{2}\right)}\mp i\frac{\Gamma\left(\frac{1}{4}+\frac{iBt}{2}\right)}{\Gamma\left(\frac{1}{4}-\frac{iBt}{2}\right)}\right)\frac{\Gamma\left(\frac{k-\frac{1}{2}+iBt}{2}\right)\Gamma\left(\frac{k+\frac{1}{2}+iBt}{2}\right)}{\Gamma\left(\frac{k-\frac{1}{2}-iBt}{2}\right)\Gamma\left(\frac{k+\frac{1}{2}-iBt}{2}\right)}.
    \end{align*}
    As $B\gg P^\varepsilon$, applying Lemma \ref{GammaDiff} on $\frac{\Gamma\left(\frac{3}{4}+\frac{iBt}{2}\right)}{\Gamma\left(\frac{3}{4}-\frac{iBt}{2}\right)}\mp i\frac{\Gamma\left(\frac{1}{4}+\frac{iBt}{2}\right)}{\Gamma\left(\frac{1}{4}-\frac{iBt}{2}\right)}$ gives arbitrary saving unless $\mp Bt>0$. On the other hand, Lemma \ref{GammaRatio} implies for any $A>0$, there exists some functions $V_3(t), V_4(t)$ such that they are $P^\varepsilon$-inert when $t\sim1$ and \begin{align*}
        G_k^\pm\left(-\frac{1}{2}+iBt\right)=&\left(V_3(t)\left(\frac{\frac{9}{4}+B^2t^2}{4e^2}\right)^{\frac{iBt}{2}}e\left(\frac{1}{4\pi}\arctan\left(\frac{2Bt}{3}\right)\right)\mp iV_4(t)\left(\frac{\frac{1}{4}+B^2t^2}{4e^2}\right)^{\frac{iBt}{2}}e\left(-\frac{1}{4\pi}\arctan \left(2Bt\right)\right)\right)\\
        &\times \left(\frac{\left(k-\frac{1}{2}\right)^2+B^2t^2}{4e^2}\frac{\left(k+\frac{1}{2}\right)^2+B^2t^2}{4e^2}\right)^{\frac{iBt}{2}}e\left(\frac{k-\frac{3}{2}}{2\pi}\arctan\left(\frac{Bt}{k-\frac{1}{2}}\right)+\frac{k-\frac{1}{2}}{2\pi}\arctan\left(\frac{Bt}{k+\frac{1}{2}}\right)\right)\\
        &+O\left(P^{-A}\right).
    \end{align*}
    Defining \begin{align*}
        V_5^\pm(\mp Bt)=&\left(V_3(Bt)\left(\frac{9}{4B^2t^2}+1\right)^{\frac{iBt}{2}}e\left(\frac{1}{4\pi}\arctan\left(\frac{2Bt}{3}\right)\right)\mp iV_4(Bt)\left(\frac{1}{4B^2t^2}+1\right)^{\frac{iBt}{2}}e\left(-\frac{1}{4\pi}\arctan \left(2Bt\right)\right)\right)\\
        &\times \left(\frac{\left(k-\frac{1}{2}\right)^2+B^2t^2}{k^2+B^2t^2}\frac{\left(k+\frac{1}{2}\right)^2+B^2t^2}{k^2+B^2t^2}\right)^{\frac{iBt}{2}}e\left(-\frac{3}{4\pi}\arctan\left(\frac{Bt}{k-\frac{1}{2}}\right)-\frac{1}{4\pi}\arctan\left(\frac{Bt}{k+\frac{1}{2}}\right)\right)\\
        &\times e\left(\frac{k-\frac{3}{2}}{2\pi}\arctan\left(\frac{Bt}{k(2k-1)+B^2t^2}\right)+\frac{k-\frac{1}{2}}{2\pi}\arctan\left(-\frac{Bt}{k(2k+1)-B^2t^2}\right)\right),
    \end{align*}
    then $V_5^\pm(Bt)$ is $P^\varepsilon$-inert when $t\sim1$ and, \begin{align*}
        G_k^\pm\left(-\frac{1}{2}+iBt\right)=&V_5^\pm(Bt)\left(\frac{|Bt|(k^2+B^2t^2)}{8e^3}\right)^{iBt}e\left(\frac{k}{\pi}\arctan\left(\frac{Bt}{k}\right)\right)+O\left(P^{-A}\right).
    \end{align*}
    
    This gives us \begin{align*}
        I(B,C)=&\frac{\sqrt{\pi B}}{2\sqrt{C}}\int_\R V_B\left(t^2\right)V_5^\pm(\mp Bt)\delta(\mp Bt>0)\left(\frac{|Bt|C(k^2+B^2t^2)}{8\pi et^2}\right)^{iBt}e\left(\frac{k}{\pi}\arctan\left(\frac{Bt}{k}\right)\right)dt+O\left(P^{-A}\right)\\
        =&\frac{\sqrt{\pi B}}{2\sqrt{C}}\int_0^\infty V_B\left(t^2\right)V_5^\pm(|B|t)e(g(t))dt+O\left(P^{-A}\right),
    \end{align*}
    where \begin{align*}
        g(t)=\mp\frac{Bt}{2\pi}\log\left(\frac{|B|C(k^2+B^2t^2)}{8\pi et}\right)\mp\frac{k}{\pi}\arctan\left(\frac{Bt}{k}\right).
    \end{align*}
    Differentiating, we get \begin{align*}
        g'(t)=&\mp\frac{B}{2\pi}\log\left(\frac{|B|C(k^2+B^2t^2)}{8\pi t}\right)\\
        g''(t)=&\mp\frac{B(-k^2+B^2t^2)}{2\pi t(k^2+B^2t^2)}=\mp\frac{B(k+|B|t)}{2\pi t(k^2+B^2t^2)}(|B|t-k),
    \end{align*}
    and for $t\sim 1$, $j\geq2$, \begin{align*}
        g^{(j)}(t)\ll B.
    \end{align*}
    Note that (1) in Lemma \ref{stat} shows that we get arbitrary saving unless $C\sim \min\left\{|B|^{-1}k^{-2},|B|^{-3}\right\}$.
    
    Here we have to separate into cases due to the size of $g''(t)$ can be small if $k$ and $B$ are close. For the case $B\ll kP^{-\varepsilon}$, $g''(t)\sim B$ for $t\sim 1$. Computing the phase point $g'
    (t_0)=0$, \begin{align*}
        t_0=&\frac{4\pi}{|B|^3C}\left(1-\sqrt{1-\left(\frac{B^2Ck}{4\pi}\right)^2}\right)\\
        g(t_0)=&\pm\frac{Bt_0}{2\pi}\mp\frac{k}{\pi}\arctan\left(\frac{|B|t_0}{k}\right)\\
        =&\pm\frac{2}{B^2C}\left(1-\sqrt{1-\left(\frac{B^2Ck}{4\pi}\right)^2}\right)\mp\frac{k}{\pi}\arctan\left(\frac{4\pi}{B^2Ck}\left(1-\sqrt{1-\left(\frac{B^2Ck}{4\pi}\right)^2}\right)\right).
    \end{align*}
    Hence by (2) in Lemma \ref{stat}, for any $A>0$, there exists a function $V_{1,B}^\pm(t)$ supported on $1\leq t\leq 2$ such that it is $P^\varepsilon$-inert in both $t$ and $B$, and \begin{align*}
        I(B,C)=\frac{\sqrt{\pi B}}{2\sqrt{Cg''(t_0)}}V_{1,B}^\pm\left(t_0\right)e(g(t_0))+O\left(P^{-A}\right).
    \end{align*}
    Defining \begin{align*}
        W_{1,B,C}^\pm\left(\frac{|B|Ck^2}{8\pi}\right)=\frac{\sqrt{\pi B}}{2\sqrt{g''(t_0)}}V_{1,B}^\pm\left(t_0\right),
    \end{align*}
    we get $W_{1,B,C}\left(t\right)$ is $P^\varepsilon$-inert in $B,C,t$, supported on $(1/2,5/2)$ and the desired result \begin{align*}
        I(B,C)=&C^{-1/2}W_{1,B,C}^\pm\left(\frac{|B|Ck^2}{8\pi}\right)\\
        &\times e\left(\pm\frac{2}{B^2C}\left(1-\sqrt{1-\left(\frac{B^2Ck}{4\pi}\right)^2}\right)\mp\frac{k}{\pi}\arctan\left(\frac{4\pi}{B^2Ck}\left(1-\sqrt{1-\left(\frac{B^2Ck}{4\pi}\right)^2}\right)\right)\right)+O\left(P^{-A}\right).
    \end{align*}
    
    Similarly for $B\gg kP^\varepsilon$, we have $g''(t)\sim B$ for $t\sim1$. Computing the phase point $g'(t_0)=0$, \begin{align*}
        t_0=&\frac{4\pi}{|B|^3C}\left(1+\sqrt{1-\left(\frac{B^2Ck}{4\pi}\right)^2}\right)\\
        g(t_0)=&\pm\frac{Bt_0}{2\pi}\mp\frac{k}{\pi}\arctan\left(\frac{|B|t_0}{k}\right)\\
        =&\pm\frac{2}{B^2C}\left(1+\sqrt{1-\left(\frac{B^2Ck}{4\pi}\right)^2}\right)\mp\frac{k}{\pi}\arctan\left(\frac{4\pi}{B^2Ck}\left(1+\sqrt{1-\left(\frac{B^2Ck}{4\pi}\right)^2}\right)\right).
    \end{align*}
    Applying the same exact procedure as above yields the desired result \begin{align*}
        I(B,C)=&C^{-1/2}W_{2,B,C}^\pm\left(\frac{4\pi}{|B|^3C}\right)\\
        &\times e\left(\pm\frac{2}{B^2C}\left(1+\sqrt{1-\left(\frac{B^2Ck}{4\pi}\right)^2}\right)\mp\frac{k}{\pi}\arctan\left(\frac{4\pi}{B^2Ck}\left(1+\sqrt{1-\left(\frac{B^2Ck}{4\pi}\right)^2}\right)\right)\right)+O\left(P^{-A}\right)
    \end{align*}
    for some function $W_{2,B,C}(t)$ supported on $(1/2,5/2)$ and $P^\varepsilon$-inert on $B,C,t$.
    
    Finally, for the most complicated case $kP^{-\varepsilon}\ll B\ll kP^\varepsilon$, we have to analyse the integral depending on the size of $|B|t-k$. Recall that in such case we get arbitrary saving unless $k^{-3}P^{-\varepsilon}\ll C\ll k^{-3}P^\varepsilon$. Perform a smooth dyadic subdivision for $P^{-\varepsilon}\leq \left(t-\frac{k}{|B|}\right)\sim T\leq k^\frac{1}{3}P^\varepsilon$, i.e. take a fixed $\varphi\in C_c^\infty(\R^+)$ such that \begin{align*}
        1-\varphi_0(x):=\sum_{P^{-\varepsilon}\leq T\leq k^\frac{1}{3}P^{-\varepsilon} \text{ Dyadic }}\left(\varphi(Tx)+\varphi(-Tx)\right)
    \end{align*}
    satisfies $\varphi_0(x)=0$ for $k^{-\frac{1}{3}}P^{-\varepsilon}\ll |x|\ll P^\varepsilon$. Inserting this into $I$ yields \begin{align}
        I(B,C)=I_0(B,C)+\sum_{P^{-\varepsilon}\leq T\leq k^\frac{1}{3}P^{-\varepsilon} \text{ Dyadic}}\left(I_T(B,C)+I_{-T}(B,C)\right)+O\left(P^{-A}\right),
    \end{align}
    where \begin{align*}
        I_0(B,C)=&\frac{\sqrt{\pi B}}{2\sqrt{C}}\int_0^\infty V_B\left(t^2\right)V_5^\pm(|B|t)\varphi_0\left(t-\frac{k}{|B|}\right)e(g(t))dt
    \end{align*}
    and for $\sigma=\pm$, \begin{align*}
        I_{\sigma T}(B,C)=\frac{\sqrt{\pi B}}{2\sqrt{C}}\int_0^\infty V_B\left(t^2\right)V_5^\pm(|B|t)\varphi\left(\sigma T\left(t-\frac{k}{|B|}\right)\right)e(g(t))dt.
    \end{align*}
    For $I_0$, define \begin{align*}
        W_{0,B,C}^\pm\left(k^\frac{1}{3}P^{-\varepsilon}\left(t-\frac{k}{|B|}\right)\right)=\frac{\sqrt{\pi B}}{2k^2\sqrt{C}}V_B\left(t^2\right)V_5^\pm(|B|t)\varphi_0\left(t-\frac{k}{|B|}\right),
    \end{align*}
    then $W_{0,B,C}^\pm$ is a function supported on $(1/2,5/2)$, $P^\varepsilon$-inert on $B,C,t$ and \begin{align*}
        I_0(B,C)=&k^2\int_0^\infty W_{0,B,C}^\pm\left(k^\frac{1}{3}P^{-\varepsilon}\left(t-\frac{k}{|B|}\right)\right)e(g(t))dt.
    \end{align*}
    
    For $I_{\sigma T}(B,C)$, we do a similar treatment as the previous two cases. With the restriction $\left|t-\frac{k}{|B|}\right|\sim \frac{1}{T}$, we have from previous computation \begin{align*}
        g'(t)=&\mp\frac{B}{2\pi}\log\left(\frac{|B|C(k^2+B^2t^2)}{8\pi t}\right)\\
        g''(t)=&\mp\frac{B(-k^2+B^2t^2)}{2\pi t(k^2+B^2t^2)}=\mp\frac{B(k+|B|t)}{2\pi t(k^2+B^2t^2)}(|B|t-k)\gg \frac{kP^{-\varepsilon}}{T}=\frac{kP^{-\varepsilon}}{T^3}T^2,
    \end{align*}
    and for $t\sim 1$, $j\geq3$, \begin{align*}
        g^{(j)}(t)\ll kP^\varepsilon\ll \frac{k}{T^3}T^jP^\varepsilon.
    \end{align*}
    For $|B|t<k$, same computation as above shows the phase point $g'(t_0)=0$ is \begin{align*}
        t_0=&\frac{4\pi}{|B|^3C}\left(1-\sqrt{1-\left(\frac{B^2Ck}{4\pi}\right)^2}\right)\\
        g(t_0)=&\pm\frac{2}{B^2C}\left(1-\sqrt{1-\left(\frac{B^2Ck}{4\pi}\right)^2}\right)\mp\frac{k}{\pi}\arctan\left(\frac{4\pi}{B^2Ck}\left(1-\sqrt{1-\left(\frac{B^2Ck}{4\pi}\right)^2}\right)\right).
    \end{align*}
    Since $T\ll k^\frac{1}{3}P^{-\varepsilon}$, this implies $\frac{k}{T}P^{-\varepsilon}\gg T^2P^\varepsilon$ and hence the condition of (2) in Lemma \ref{stat} is satisfied to yield \begin{align*}
        I_{-T}(B,C)=\frac{\sqrt{\pi B}}{2\sqrt{Cg''(t_0)}}V_{0,B,-T}^\pm\left(t_0\right)e(g(t_0))+O\left(P^{-A}\right)
    \end{align*}
    for some $TP^\varepsilon$-inert function $V_{0,B,-T}^\pm(t)$ supported on $\left(\frac{k}{|B|}-\frac{1}{T},\frac{k}{|B|}+\frac{1}{T}\right)$ and $P^\varepsilon$-inert with respect to $B$. Defining \begin{align*}
        W_{0,B,C,-T}^\pm\left(t_0\right)=\frac{\sqrt{\pi B}}{2k^\frac{3}{2}\sqrt{CTg''(t_0)}}V_{0,B,T}^\pm\left(t_0\right),
    \end{align*}
    we get $W_{0,B,C,-T}^\pm\left(t\right)$ is $TP^\varepsilon$-inert in $t$, $P^\varepsilon$-inert in $B,C$, supported on $\left(\frac{k}{|B|}-\frac{1}{T},\frac{k}{|B|}+\frac{1}{T}\right)$ and the desired result \begin{align*}
        I_{-T}(B,C)=&k^\frac{3}{2}\sqrt{T}W_{0,B,C,-T}^\pm\left(\frac{4\pi}{|B|^3C}\left(1-\sqrt{1-\left(\frac{B^2Ck}{4\pi}\right)^2}\right)\right)\\
        &\times e\left(\pm\frac{2}{B^2C}\left(1-\sqrt{1-\left(\frac{B^2Ck}{4\pi}\right)^2}\right)\mp\frac{k}{\pi}\arctan\left(\frac{4\pi}{B^2Ck}\left(1-\sqrt{1-\left(\frac{B^2Ck}{4\pi}\right)^2}\right)\right)\right)+O\left(P^{-A}\right).
    \end{align*}
    
    Similarly for $|B|t>k$, same computation shows the phase point $g'(t_0)=0$ is \begin{align*}
        t_0=&\frac{4\pi}{|B|^3C}\left(1+\sqrt{1-\left(\frac{B^2Ck}{4\pi}\right)^2}\right)\\
        g(t_0)=&\pm\frac{2}{B^2C}\left(1+\sqrt{1-\left(\frac{B^2Ck}{4\pi}\right)^2}\right)\mp\frac{k}{\pi}\arctan\left(\frac{4\pi}{B^2Ck}\left(1+\sqrt{1-\left(\frac{B^2Ck}{4\pi}\right)^2}\right)\right).
    \end{align*}
    The same procedure implies there exists some function $W_{0,B,C,T}^\pm(t)$ that is supported on $[1,2]$ and $P^\varepsilon$-inert in $t$, $P^\varepsilon$-inert in $B,C$, supported on $\left(\frac{k}{|B|}-\frac{1}{T},\frac{k}{|B|}+\frac{1}{T}\right)$ such that \begin{align*}
        I_T(B,C)=&k^\frac{3}{2}\sqrt{T}W_{0,B,C,T}^\pm\left(\frac{4\pi}{|B|^3C}\left(1+\sqrt{1-\left(\frac{B^2Ck}{4\pi}\right)^2}\right)\right)\\
        &\times e\left(\pm\frac{2}{B^2C}\left(1+\sqrt{1-\left(\frac{B^2Ck}{4\pi}\right)^2}\right)\mp\frac{k}{\pi}\arctan\left(\frac{4\pi}{B^2Ck}\left(1+\sqrt{1-\left(\frac{B^2Ck}{4\pi}\right)^2}\right)\right)\right)+O\left(P^{-A}\right).
    \end{align*}
\end{proof}

Applying Lemma \ref{HugeIntegralAnalysis}, we prove the following two lemmas.

\begin{lemma}\label{HugeIntegralAnalysis2}
    Let $A,C_1,C_2>0$, $B_1,B_2,D\in\R$ and let $V,W\in C_c^\infty(\R)$ be a $P^\varepsilon$-inert function supported on $[1,2]$. Define \begin{align*}
        J=\int_0^\infty W(x)I\left(B_1,C_1x^{-1}\right)\overline{I\left(B_2,C_2x^{-1}\right)}e(Dx)dx,
    \end{align*}
    with \begin{align*}
        I(B,C)=\frac{1}{2\pi i}\int_{\left(-\frac{1}{2}\right)}\left(\pi^{-3}C\right)^sG_k^\pm\left(s\right)\int_0^\infty V(y)e\left(B\sqrt{y}\right)y^{-s-1}dyds
    \end{align*}
    as in Lemma \ref{HugeIntegralAnalysis}. Then we have the following.
    \begin{itemize}
        \item For $B\ll P^\varepsilon$, we have           \begin{align*}
                J\ll \frac{P^\varepsilon}{\sqrt{C_1C_2}}\delta\left(C_1,C_2\gg k^{-2}P^{-\varepsilon}, |D|\ll P^\varepsilon\right)+P^{-A}.
            \end{align*}
        \item For $P^\varepsilon\ll B\ll kP^{-\varepsilon}$, if $k^2\left|B_1^2C_1-B_2^2C_2\right|\ll P^\varepsilon$, we have \begin{align*}
            J\ll\frac{P^\varepsilon}{\sqrt{C_1C_2}}\delta\left(B_1C_1k^2, B_2C_2k^2\sim 1, |D|\ll P^\varepsilon\right)+P^{-A},
        \end{align*}
        and if $k^2\left|B_1^2C_1-B_2^2C_2\right|\gg P^\varepsilon$, we have \begin{align*}
            J\ll P^\varepsilon\left(C_1C_2k^2\left|B_1^2C_1-B_2^2C_2\right|\right)^{-1/2}\delta\left(B_1C_1k^2, B_2C_2k^2\sim 1, |D|\sim k^2\left|B_1^2C_1-B_2^2C_2\right|\right)+P^{-A}.
        \end{align*}
        \item For $B\gg kP^\varepsilon$, if $k^2\left|B_1^2C_1-B_2^2C_2\right|\ll P^\varepsilon$, we have \begin{align*}
            J\ll\frac{P^\varepsilon}{\sqrt{C_1C_2}}\delta\left(B_1^3C_1, B_2^3C_2\sim 1, \left|D\pm 4\left((B_1^2C_1)^{-1}-(B_2^2C_2)^{-1}\right)\right|\ll P^\varepsilon\right)+P^{-A},
        \end{align*}
        and if $k^2\left|B_1^2C_1-B_2^2C_2\right|\gg P^\varepsilon$, we have \begin{align*}
            J\ll& P^\varepsilon\left(C_1C_2k^2\left|B_1^2C_1-B_2^2C_2\right|\right)^{-1/2}\\
            &\times\delta\left(B_1^3C_1, B_2^3C_2\sim 1, \left|D\pm 4\left((B_1^2C_1)^{-1}-(B_2^2C_2)^{-1}\right)\right|\sim k^2\left|B_1^2C_1-B_2^2C_2\right|\right)+P^{-A}.
        \end{align*}
    \end{itemize}
\end{lemma}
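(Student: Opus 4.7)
The plan is to substitute the asymptotic expansion of $I(B,C)$ from Lemma \ref{HugeIntegralAnalysis} into $J$ and then analyze the resulting $x$-integral via stationary phase (Lemma \ref{stat}) and integration by parts. The three cases in the lemma correspond directly to the three asymptotic regimes established in Lemma \ref{HugeIntegralAnalysis}, so most of the bookkeeping is already done; what remains is the combined phase analysis.

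For the case $B_1,B_2 \ll P^\varepsilon$, Lemma \ref{HugeIntegralAnalysis} gives $I(B_j,C_j/x) = (C_j/x)^{-1/2} V_0^\pm(B_j,C_j/x)\,\delta(C_j/x \gg k^{-2}P^{-\varepsilon}) + O(P^{-A})$. Substituting and noting that $V_0^\pm$ is $P^\varepsilon$-inert in all variables, $J$ reduces to $\frac{1}{\sqrt{C_1C_2}}\int_0^\infty \tilde W(x)\,x\,e(Dx)\,dx$ times the support indicators. Since $\tilde W$ is $P^\varepsilon$-inert and compactly supported on $[1,2]$, repeated integration by parts gives arbitrary saving unless $|D| \ll P^\varepsilon$, and the indicators force $C_1,C_2 \gg k^{-2}P^{-\varepsilon}$.

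For the case $P^\varepsilon \ll B_j \ll kP^{-\varepsilon}$, substitute the non-transitional asymptotic. The support factor $W_{1,B_j,C_j/x}^\pm(|B_j|C_j k^2/(8\pi x))$ forces $|B_j|C_j k^2 \sim 1$ (so $x_0 \sim 1$). Setting $a_j = B_j^2 C_j$ and $u_j = a_j k/(4\pi x)$, and using the identity $\arctan\!\bigl(u/(1+\sqrt{1-u^2})\bigr) = \tfrac12 \arcsin u$, the phase simplifies to $\phi_j(x) = \pm \frac{2x}{a_j}(1-\sqrt{1-u_j^2}) \mp \frac{k}{2\pi}\arcsin(u_j)$. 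In the support, $u_j \sim |B_j|/k \ll 1$, so a Taylor expansion of $\phi_1(x)-\phi_2(x)$ in the small parameter $u_j$ yields the leading term $\mp\frac{k^2(a_1-a_2)}{16\pi^2 x}$ with lower-order corrections that remain $P^\varepsilon$-inert. For mismatched $\pm$ signs between $I$ and $\overline{I}$, one instead expands $\phi_1 + \phi_2$ and shows $\Phi'(x) \sim \max(|B_1|,|B_2|)$, which is never met by $D$ in the claimed support, so integration by parts gives arbitrary saving. For matched signs, the combined phase is $\Phi(x) = \pm\frac{k^2(a_1-a_2)}{16\pi^2 x} \cdot (-1) + Dx$, yielding $\Phi''(x) \sim k^2|a_1^2 C_1 - a_2^2 C_2|$; Lemma \ref{stat}(1) handles $k^2|B_1^2 C_1 - B_2^2 C_2| \ll P^\varepsilon$, while Lemma \ref{stat}(2) (or Lemma \ref{2ndDerB}) handles the complementary range.

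The case $B_j \gg kP^\varepsilon$ proceeds in parallel. Here the support forces $|B_j|^3 C_j \sim 1$, and the phase uses $1+\sqrt{1-u_j^2} \approx 2-\tfrac12 u_j^2$. Writing $\arctan(Y) = \pi/2 - \arctan(1/Y)$ for $Y \gg 1$ and Taylor-expanding, one gets $\phi_j(x) \approx \pm\tfrac{4x}{a_j} \pm \tfrac{a_j k^2}{16\pi^2 x} \mp \tfrac{k}{2} + \text{(lower order)}$. Hence $\Phi'(x) \approx \mp 4(1/a_1 - 1/a_2) + D \mp \tfrac{k^2(a_1-a_2)}{16\pi^2 x^2}$, so the stationary point exists only when $D \pm 4((B_1^2 C_1)^{-1}-(B_2^2 C_2)^{-1})$ lies in the claimed window, and $\Phi''(x_0) \sim k^2|a_1-a_2|$ again yields the stated bound via stationary phase. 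The sign-mismatch case is again ruled out because $\Phi'$ is dominated by the $4x/a_j$ terms of size $|B|$, far larger than anything $D$ can balance.

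The main obstacle I expect is the bookkeeping in cases 2 and 3: one must extract the correct leading-order phase from the transcendental expressions in $\phi_j$ (and verify that the Taylor remainders are harmless and inherit $P^\varepsilon$-inertness from $W_{1,B,C}^\pm, W_{2,B,C}^\pm$), handle the four combinations of $\pm$ signs coming from $I$ and $\overline I$ separately, and confirm that the $D$-window claimed in each subcase is exactly dictated by the stationary-point equation $\Phi'(x_0)=0$ with $x_0 \in [1,2]$. Once these algebraic simplifications are in place, the application of Lemma \ref{stat} is routine.
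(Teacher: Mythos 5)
Your proposal follows essentially the same approach as the paper: substitute the asymptotics from Lemma~\ref{HugeIntegralAnalysis}, identify the combined phase, Taylor-expand it in the small parameter $B_j^2C_jk/(4\pi x)$ (the paper writes this as a power series with explicit coefficients $a_{1,j}$, $a_{2,j}$; you use the $\arctan$-to-$\arcsin$ identity, which yields the same leading term $\mp k^2(B_1^2C_1-B_2^2C_2)/(16\pi^2 x)$), and then conclude via repeated integration by parts or Lemma~\ref{stat}. The small typo ``$k^2|a_1^2C_1-a_2^2C_2|$'' for what should be $k^2|a_1-a_2|=k^2|B_1^2C_1-B_2^2C_2|$ is harmless, and your extra discussion of mismatched $\pm$ signs is unnecessary since the paper's application always pairs $I$ and $\overline I$ with the same $\pm$-superscript.
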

\begin{proof}
    For $B\ll P^\varepsilon$, Lemma \ref{HugeIntegralAnalysis} gives us \begin{align*}
        J=\frac{1}{\sqrt{C_1C_2}}\delta\left(C_1,C_2\gg k^{-2}P^{-\varepsilon}\right)\int_0^\infty W(x)xV_0^\pm(B_1,C_1x^{-1})\overline{V_0^\pm(B_2,C_2x^{-1})}e\left(Dx\right)dx+O\left(P^{-A}\right),
    \end{align*}
    for some $P^\varepsilon$-inert function $V_0^\pm$. Repeated integration by parts gives arbitrary saving unless $|D|\ll P^\varepsilon$, and hence we get the desired statement.
    
    For $P^\varepsilon\ll B\ll kP^{-\varepsilon}$, Lemma \ref{HugeIntegralAnalysis} gives us \begin{align*}
        J=\frac{1}{\sqrt{C_1C_2}}\int_0^\infty W(x)xW_{1,B_1,C_1x^{-1}}^\pm\left(\frac{|B_1|C_1k^2}{8\pi x}\right)\overline{W_{1,B_2,C_2x^{-1}}^\pm\left(\frac{|B_2|C_2k^2}{8\pi x}\right)}e(f_1(x))dx+O\left(P^{-A}\right),
    \end{align*}
    with $W_{1,B,Cx^{-1}}^\pm(t)$ as defined in Lemma \ref{HugeIntegralAnalysis} is supported on $[1,2]$ and $P^\varepsilon$-inert in $B,Cx^{-1},t$, and \begin{align*}
        f_1(x)=&\pm\frac{2x}{B_1^2C_1}\left(1-\sqrt{1-\left(\frac{B_1^2C_1k}{4\pi x}\right)^2}\right)\mp\frac{k}{\pi}\arctan\left(\frac{4\pi x}{B_1^2C_1k}\left(1-\sqrt{1-\left(\frac{B_1^2C_1k}{4\pi x}\right)^2}\right)\right)\\
        &\mp\frac{2x}{B_2^2C_2}\left(1-\sqrt{1-\left(\frac{B_2^2C_2k}{4\pi x}\right)^2}\right)\pm\frac{k}{\pi}\arctan\left(\frac{4\pi x}{B_2^2C_2k}\left(1-\sqrt{1-\left(\frac{B_2^2C_2k}{4\pi x}\right)^2}\right)\right)+Dx.
    \end{align*}
    Note that the support of $W_{1,B,Cx^{-1}}^\pm$ implies we get arbitrary saving unless \begin{align*}
        B_1C_1k^2, B_2C_2k^2\sim 1.
    \end{align*}
    Applying Taylor expansions, we get for $Z\ll P^{-\varepsilon}$, there exists $a_{1,j}\in\R$ with $a_{1,1}=\frac{1}{4}, a_{1,2}=-\frac{5}{48}$ such that \begin{align*}
        \pm\frac{k}{2\pi Z}\left(1-\sqrt{1-Z^2}\right)\mp\frac{k}{\pi}\arctan\left(Z^{-1}\left(1-\sqrt{1-Z^2}\right)\right)=\mp\frac{k}{\pi}\sum_{j=1}^\infty a_{1,j}Z^{2j-1}.
    \end{align*}
    Hence we have \begin{align*}
        f_1(x)=\mp\frac{k}{\pi}\sum_{j=1}^\infty a_{1,j}\left(\frac{k}{4\pi x}\right)^{2j-1}\left((B_1^2C_1)^{2j-1}-(B_2^2C_2)^{2j-1}\right)+Dx.
    \end{align*}
    Differentiating gives us \begin{align*}
        f_1'(x)=\pm\frac{k}{\pi}\sum_{j=1}^\infty a_{1,j}\frac{2j-1}{x}\left(\frac{k}{4\pi x}\right)^{2j-1}\left((B_1^2C_1)^{2j-1}-(B_2^2C_2)^{2j-1}\right)+D
    \end{align*}
    and for $j\geq2$, \begin{align*}
        f_1^{(j)}(x)\sim_j k^2\left|B_1^2C_1-B_2^2C_2\right|.
    \end{align*}
    For $k^2\left|B_1^2C_1-B_2^2C_2\right|\ll P^\varepsilon$, repeated integration by parts gives us arbitrary saving unless $$|D|\ll P^\varepsilon.$$
    For $k^2\left|B_1^2C_1-B_2^2C_2\right|\gg P^\varepsilon$, (1) in Lemma \ref{stat} gives us arbitrary saving unless \begin{align*}
        |D|\sim k^2\left|B_1^2C_1-B_2^2C_2\right|
    \end{align*}
    and (2) in Lemma \ref{stat} gives us \begin{align*}
        J\ll P^\varepsilon\left(C_1C_2k^2\left|B_1^2C_1-B_2^2C_2\right|\right)^{-1/2},
    \end{align*}
    giving us the desired statement.
    
    For $B\gg kP^\varepsilon$, Lemma \ref{HugeIntegralAnalysis} gives us \begin{align*}
        J=\frac{1}{\sqrt{C_1C_2}}\int_0^\infty W(x)xW_{2,B_1,C_1x^{-1}}^\pm\left(\frac{4\pi}{|B_1|^3C_1}\right)\overline{W_{2,B_2,C_2x^{-1}}^\pm\left(\frac{4\pi}{|B_2|^3C_2}\right)}e(f_2(x))dx+O\left(P^{-A}\right),
    \end{align*}
    with $W_{2,B,Cx^{-1}}^\pm(t)$ as defined in Lemma \ref{HugeIntegralAnalysis} is supported on $[1,2]$ and $P^\varepsilon$-inert in $B,Cx^{-1},t$, and \begin{align*}
        f_2(x)=&\pm\frac{2x}{B_1^2C_1}\left(1+\sqrt{1-\left(\frac{B_1^2C_1k}{4\pi x}\right)^2}\right)\mp\frac{k}{\pi}\arctan\left(\frac{4\pi x}{B_1^2C_1k}\left(1+\sqrt{1-\left(\frac{B_1^2C_1k}{4\pi x}\right)^2}\right)\right)\\
        &\mp\frac{2x}{B_2^2C_2}\left(1+\sqrt{1-\left(\frac{B_2^2C_2k}{4\pi x}\right)^2}\right)\pm\frac{k}{\pi}\arctan\left(\frac{4\pi x}{B_2^2C_2k}\left(1+\sqrt{1-\left(\frac{B_2^2C_2k}{4\pi x}\right)^2}\right)\right)+Dx.
    \end{align*}
    Applying the properties of $\arctan$ and Taylor expansions, we get for $Z\ll P^{-\varepsilon}$, there exists $a_{2,j}\in\R$ with $a_{2,1}=\frac{1}{4}$ such that \begin{align*}
        &\pm\frac{k}{2\pi Z}\left(1+\sqrt{1-Z^2}\right)\mp\frac{k}{\pi}\arctan\left(Z^{-1}\left(1+\sqrt{1-Z^2}\right)\right)\\
        =&\mp\frac{k}{2}\pm\frac{k}{2\pi Z}\left(1+\sqrt{1-Z^2}\right)\pm\frac{k}{\pi}\arctan\left(Z\left(1+\sqrt{1-Z^2}\right)^{-1}\right)\\
        =&\mp\frac{k}{2}\pm\frac{k}{2\pi Z}\left(1+\sqrt{1-Z^2}\right)\pm\frac{k}{\pi}\arctan\left(Z^{-1}\left(1-\sqrt{1-Z^2}\right)\right)=\mp\frac{k}{2}\pm\frac{k}{\pi Z}\pm\frac{k}{\pi}\sum_{j=1}^\infty a_{2,j}Z^{2j-1}.
    \end{align*}
    Hence we have \begin{align*}
        f_2(x)=\pm 4x\left((B_1^2C_1)^{-1}-(B_2^2C_2)^{-1}\right)\pm\frac{k}{\pi}\sum_{j=1}^\infty a_{2,j}\left(\frac{k}{4\pi x}\right)^{2j-1}\left((B_1^2C_1)^{2j-1}-(B_2^2C_2)^{2j-1}\right)+Dx.
    \end{align*}
    Differentiating gives us \begin{align*}
        f_2'(x)=\pm 4\left((B_1^2C_1)^{-1}-(B_2^2C_2)^{-1}\right)\mp\frac{k}{\pi}\sum_{j=1}^\infty a_{2,j}\frac{2j-1}{w}\left(\frac{k}{4\pi x}\right)^{2j-1}\left((B_1^2C_1)^{2j-1}-(B_2^2C_2)^{2j-1}\right)+D
    \end{align*}
    and for $j\geq2$, \begin{align*}
        f_2^{(j)}(x)\sim_j k^2\left|B_1^2C_1-B_2^2C_2\right|.
    \end{align*}
    For $k^2\left|B_1^2C_1-B_2^2C_2\right|\ll P^\varepsilon$, repeated integration by parts gives us arbitrary saving unless $$\left|D\pm 4\left((B_1^2C_1)^{-1}-(B_2^2C_2)^{-1}\right)\right|\ll P^\varepsilon.$$
    For $k^2\left|B_1^2C_1-B_2^2C_2\right|\gg P^\varepsilon$, (1) in Lemma \ref{stat} gives us arbitrary saving unless \begin{align*}
        \left|D\pm 4\left((B_1^2C_1)^{-1}-(B_2^2C_2)^{-1}\right)\right|\sim k^2\left|B_1^2C_1-B_2^2C_2\right|
    \end{align*}
    and (2) in Lemma \ref{stat} gives us \begin{align*}
        J\ll P^\varepsilon\left(C_1C_2k^2\left|B_1^2C_1-B_2^2C_2\right|\right)^{-1/2},
    \end{align*}
    giving us the desired statement.
\end{proof}

\begin{lemma}\label{HugeIntegralAnalysisTransit}
    Let $\sigma=\pm1, A,C_1,C_2>0, T\geq0, B_1,B_2,D\in\R$ such that $kP^{-\varepsilon}\ll B_1,B_2\ll kP^\varepsilon$ and $k^{-3}P^{-\varepsilon}\ll C_1,C_2\ll k^{-3}P^\varepsilon$. Let $V,W\in C_c^\infty(\R)$ be a $P^\varepsilon$-inert function supported on $[1,2]$. Define \begin{align*}
        J_{\sigma T}=\int_0^\infty W(x)I_{\sigma T}(B_1,C_1x^{-1})\overline{I_{\sigma T}(B_2,C_2x^{-1})}e(Dx)dx,
    \end{align*}
    with \begin{align*}
        I_0(B,C)=&k^2\int_0^\infty W_{0,B,C}^\pm\left(k^\frac{1}{3}P^{-\varepsilon}\left(t-\frac{k}{|B|}\right)\right)e\left(\mp\frac{Bt}{2\pi}\log\left(\frac{|B|C(k^2+B^2t^2)}{8\pi et}\right)\mp\frac{k}{\pi}\arctan\left(\frac{Bt}{k}\right)\right)dt
    \end{align*}
    and for $T>0$, \begin{align*}
        I_{\sigma T}(B,C)=&k^\frac{3}{2}\sqrt{T}W_{0,B,C,\sigma T}^\pm\left(\frac{4\pi}{|B|^3C}\left(1+\sigma\sqrt{1-\left(\frac{B^2Ck}{4\pi}\right)^2}\right)\right)\\
        &\times e\left(\pm\frac{2}{B^2C}\left(1+\sigma\sqrt{1-\left(\frac{B^2Ck}{4\pi}\right)^2}\right)\mp\frac{k}{\pi}\arctan\left(\frac{4\pi}{B^2Ck}\left(1+\sigma\sqrt{1-\left(\frac{B^2Ck}{4\pi}\right)^2}\right)\right)\right)
    \end{align*}
    with $W_{0,B,C}^\pm(t)$ supported on $(-1,1)$, $W_{0,B,C,\sigma T}^\pm(t)$ supported on $\left(\frac{k}{|B|}-\frac{1}{T},\frac{k}{|B|}+\frac{1}{T}\right)$, $TP^\varepsilon$-inert in $t$ and $P^\varepsilon$-inert in $B,C$ as in Lemma \ref{HugeIntegralAnalysis}. Then we have \begin{itemize}
        \item If $|B_1t_1-B_2t_2|\ll P^\varepsilon$, we have \begin{align*}
            J_0\ll k^\frac{10}{3}P^\varepsilon\delta\left(|D|\ll P^\varepsilon\right)+P^{-A}.
        \end{align*}
        If $|B_1t_1-B_2t_2|\gg P^\varepsilon$, we have \begin{align*}
        J_0\ll k^\frac{10}{3}P^\varepsilon|B_1t_1-B_2t_2|^{-1/2}\delta\left(|D|\sim |B_1t_1-B_2t_2|\right)+P^{-A}.
        \end{align*}
        \item If $k^2\left|B_1^2C_1-B_2^2C_2\right|\ll P^\varepsilon$, we have \begin{align*}
            J_{\sigma T}\ll P^\varepsilon k^3T\delta\left(\left|D\right|\ll P^\varepsilon\right)+P^{-A}.
        \end{align*}
        If $k^2\left|B_1^2C_1-B_2^2C_2\right|\gg P^\varepsilon$, we have \begin{align*}
            J_{\sigma T}\ll P^\varepsilon k^3T\left(k^2\left|B_1^2C_1-B_2^2C_2\right|\right)^{-1/2}\delta\left(|D|\sim k^2\left|B_1^2C_1-B_2^2C_2\right|\right)+P^{-A}.
        \end{align*}
    \end{itemize}
\end{lemma}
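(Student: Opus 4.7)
The strategy for both $J_0$ and $J_{\sigma T}$ is the same: substitute the explicit expressions for $I_0$ and $I_{\sigma T}$ coming from Lemma \ref{HugeIntegralAnalysis}, interchange the order of integration to bring the $x$-integral to the front, and analyse the resulting oscillatory $x$-integral using repeated integration by parts together with the stationary phase lemma (Lemma \ref{stat}) and the second derivative bound (Lemma \ref{2ndDerB}). The key point is that the weights $W_{0,B,C}^\pm$ and $W_{0,B,C,\sigma T}^\pm$ are $P^\varepsilon$-inert in $B$ and $C$, so when we replace $C$ by $C_jx^{-1}$ for $x\in[1,2]$ they remain $P^\varepsilon$-inert in $x$, and all significant $x$-oscillation comes from the explicit phase and from $e(Dx)$.

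\textbf{Treatment of $J_{\sigma T}$.} Here the integrand is essentially a pointwise product of explicit exponential factors with a $P^\varepsilon$-inert weight, so $J_{\sigma T}$ reduces to a single oscillatory integral in $x$ of the form
\[
J_{\sigma T}=k^3T\int_0^\infty \tilde W(x)\,e\bigl(\phi_1(x)-\phi_2(x)+Dx\bigr)\,dx,
\]
where $\phi_j(x)$ is the phase of $I_{\sigma T}(B_j,C_jx^{-1})$. A direct differentiation (or an application of the stationary phase relation $\frac{d}{dx}\phi_j(x)=\pm |B_j|t_{0,j}/(2\pi x)$ with $t_{0,j}=\frac{4\pi x}{|B_j|^3 C_j}(1+\sigma\sqrt{1-u_j^2})$) combined with Taylor expansion in $u_j=B_j^2C_jk/(4\pi x)\sim 1$ shows, exactly as in the $B\gg kP^\varepsilon$ case of Lemma \ref{HugeIntegralAnalysis2}, that $(\phi_1-\phi_2)''(x)\sim k^2|B_1^2C_1-B_2^2C_2|$. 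The trivial bound $|I_{\sigma T}|^2\ll k^3T$ combined with Lemma \ref{2ndDerB} (stationary phase) and integration by parts gives the two cases: if $k^2|B_1^2C_1-B_2^2C_2|\ll P^\varepsilon$ the phase derivative is approximately $D$ up to negligible error, forcing $|D|\ll P^\varepsilon$; if $k^2|B_1^2C_1-B_2^2C_2|\gg P^\varepsilon$ there is a stationary point only for $|D|\sim k^2|B_1^2C_1-B_2^2C_2|$, and stationary phase gains the factor $\bigl(k^2|B_1^2C_1-B_2^2C_2|\bigr)^{-1/2}$.

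\textbf{Treatment of $J_0$.} Here $I_0$ is itself an integral over an internal variable $t$ supported on an interval of length $k^{-1/3}P^\varepsilon$ around $k/|B|$, so $J_0$ becomes a triple integral over $(x,t_1,t_2)$. The crucial observation is that the $x$-dependence of the phase of $I_0(B_j,C_jx^{-1})$ is purely of the form $\pm\frac{B_jt_j}{2\pi}\log x$ (the rest of the phase depends only on $t_j$). After interchanging integrals to place $x$ on the outside, the $x$-integrand has phase
\[
\pm\,\frac{B_1t_1-B_2t_2}{2\pi}\log x+Dx,
\]
with first derivative $\pm(B_1t_1-B_2t_2)/(2\pi x)+D$ and second derivative of size $|B_1t_1-B_2t_2|$. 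Repeated integration by parts (for small $|B_1t_1-B_2t_2|$) and Lemma \ref{stat} (for large $|B_1t_1-B_2t_2|$) yield the stated dichotomy: no stationary point unless $|D|\sim|B_1t_1-B_2t_2|$, and when there is one we gain $|B_1t_1-B_2t_2|^{-1/2}$. The remaining $t_j$-integrals are bounded trivially by the length of their supports, yielding the overall factor $k^2\cdot k^2\cdot k^{-2/3}=k^{10/3}$ absorbed into the trivial size.

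\textbf{Main obstacle.} The delicate step is bookkeeping the $P^\varepsilon$-inertness of the composite weight after the substitution $C\mapsto Cx^{-1}$, and verifying the hypotheses of Lemma \ref{stat} (in particular the range $Y/X^2\geq R\geq 1$) in the transitional range $B\sim k$, $C\sim k^{-3}$, where naive size estimates can be fragile. For $J_{\sigma T}$ one must also verify that the Taylor expansion around $u_j\sim 1$ converges uniformly in the relevant parameter region and that the constant term $\pm 4((B_1^2C_1)^{-1}-(B_2^2C_2)^{-1})$ coming from $\arctan$ (as in Lemma \ref{HugeIntegralAnalysis2}) can be absorbed into $D$ in the simpler form displayed in the lemma, which is possible precisely because $B_j^2C_j\sim k^{-1}$ forces that constant to be of the same order $k^2|B_1^2C_1-B_2^2C_2|$ as the main oscillation rate.
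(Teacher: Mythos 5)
Your proposal is correct and follows essentially the same route as the paper: interchange the order of integration to isolate the $x$-integral, observe that for $J_0$ the $x$-dependent part of the phase is $\pm\tfrac{B_1t_1-B_2t_2}{2\pi}\log x+Dx$ while for $J_{\sigma T}$ the phase reduces (via Taylor expansion around $B_j^2C_jk/(4\pi x)\sim 1$) to the one already analysed in Lemma \ref{HugeIntegralAnalysis2}, then apply Lemma \ref{stat} (or Lemma \ref{2ndDerB}, needed for $J_{\sigma T}$ because the weight is only $TP^\varepsilon$-inert but its support has length $\ll 1/T$, so $\int|H'|\ll P^\varepsilon$), and trivially bound the $t_1,t_2$-integrals by the length $k^{-1/3}P^\varepsilon$ of their supports to obtain $k^4\cdot k^{-2/3}=k^{10/3}$. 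Your remark that the constant $4\bigl((B_1^2C_1)^{-1}-(B_2^2C_2)^{-1}\bigr)$ for $\sigma=+1$ is of the same order $k^2|B_1^2C_1-B_2^2C_2|$ (because $B_j^2C_j\sim k^{-1}$ in the transitional range) and hence its absorption into $D$ is harmless is a point the paper leaves implicit, so making it explicit is a small but genuine clarification.
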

\begin{proof}
    For the case $T=0$, changing the order of integration gives us \begin{align*}
        J_0=k^4\int_0^\infty\int_0^\infty\int_0^\infty W(x)W_{0,B_1,C_1x^{-1}}^\pm\left(k^\frac{1}{3}P^{-\varepsilon}\left(t_1-\frac{k}{|B_1|}\right)\right)\overline{W_{0,B_2,C_2}^\pm\left(k^\frac{1}{3}P^{-\varepsilon}\left(t_2-\frac{k}{|B_2|}\right)\right)}e(f_{0,0}(x,t_1,t_2))dxdt_1dt_2,
    \end{align*}
    where \begin{align*}
        f_{0,0}(x,t_1,t_2)=&\mp\frac{B_1t_1}{2\pi}\log\left(\frac{|B_1|C_1(k^2+B_1^2t_1^2)}{8\pi ext_1}\right)\mp\frac{k}{\pi}\arctan\left(\frac{B_1t_1}{k}\right)\\
        &\pm\frac{B_2t_2}{2\pi}\log\left(\frac{|B_2|C_2(k^2+B_2^2t_2^2)}{8\pi xet_2}\right)\pm\frac{k}{\pi}\arctan\left(\frac{B_2t_2}{k}\right)+Dx.
    \end{align*}
    Differentiating with respect to $x$ gives us \begin{align*}
        \frac{d}{dx}f_{0,0}(x,t_1,t_2)=\pm\frac{1}{2\pi x}(B_1t_1-B_2t_2)+D
    \end{align*}
    and for $j\geq2$, \begin{align*}
        \frac{d^j}{dx^j}f_{0,0}(x,t_1,t_2)\sim_j |B_1t_1-B_2t_2|.
    \end{align*}
    For $|B_1t_1-B_2t_2|\ll P^\varepsilon$, (1) in Lemma \ref{stat} gives us arbitrary saving unless \begin{align*}
        |D|\ll P^\varepsilon.
    \end{align*}
    Note that the support of $W_{0,B,Cx^{-1}}^\pm$ restricts the range of $t_1$ and $t_2$, giving us \begin{align*}
        J_0\ll k^\frac{10}{3}P^\varepsilon.
    \end{align*}
    For $|B_1t_1-B_2t_2|\gg P^\varepsilon$, (1) in Lemma \ref{stat} gives us arbitrary saving unless \begin{align*}
        |D|\sim |B_1t_1-B_2t_2|
    \end{align*}
    and (2) in Lemma \ref{stat} on the $x$-integral, together with the support of $W_{0,B,Cx^{-1}}^\pm$ restricting the range of $t_1$ and $t_2$ gives us \begin{align*}
        J_0\ll k^\frac{10}{3}P^\varepsilon|B_1t_1-B_2t_2|^{-1/2},
    \end{align*}
    giving us the desired statement.
    
    For the case $P^{-\varepsilon}\leq T\leq k^\frac{1}{3}P^{-\varepsilon}$, we have \begin{align*}
        J_{\sigma T}=&k^3T\int_0^\infty H(x)e(f_{0,\sigma T}(x))dx,
    \end{align*}
    where \begin{align*}
        H(x)=W(x)W_{0,B_1,C_1x^{-1},\sigma T_1}^\pm\left(\frac{4\pi x}{|B_1|^3C_1}\left(1+\sigma\sqrt{1-\left(\frac{B_1^2C_1k}{4\pi x}\right)^2}\right)\right)\overline{W_{0,B_2,C_2x^{-1},\sigma T}^\pm\left(\frac{4\pi x}{|B_2|^3C_2}\left(1+\sigma\sqrt{1-\left(\frac{B_2^2C_2k}{4\pi x}\right)^2}\right)\right)}
    \end{align*}
    and \begin{align*}
        f_{0,\sigma T}(x)=&\pm\frac{2x}{B_1^2C_1}\left(1+\sigma\sqrt{1-\left(\frac{B_1^2C_1k}{4\pi x}\right)^2}\right)\mp\frac{k}{\pi}\arctan\left(\frac{4\pi x}{B_1^2C_1k}\left(1+\sigma\sqrt{1-\left(\frac{B_1^2C_1k}{4\pi x}\right)^2}\right)\right)\\
        &\mp\frac{2x}{B_2^2C_2}\left(1+\sigma\sqrt{1-\left(\frac{B_2^2C_2k}{4\pi x}\right)^2}\right)\mp\frac{k}{\pi}\arctan\left(\frac{4\pi x}{B_2^2C_2k}\left(1+\sigma\sqrt{1-\left(\frac{B_2^2C_2k}{4\pi x}\right)^2}\right)\right)+Dx.
    \end{align*}
    If $k^2\left|B_1^2C_1-B_2^2C_2\right|\ll P^\varepsilon$, we apply the exact same treatment as done in the proof of Lemma \ref{HugeIntegralAnalysis2}, with the $\sigma=1$ case being the same as the $B\gg kP^\varepsilon$ case, and the $\sigma=-1$ case being the same as the $B\ll kP^\varepsilon$ case, giving us \begin{align*}
        J_{\sigma T}\ll P^\varepsilon k^3T\delta\left(|D|\ll P^\varepsilon\right)+P^{-A}.
    \end{align*}
    
    If $k^2\left|B_1^2C_1-B_2^2C_2\right|\ll P^\varepsilon$, we have to be slightly more careful because $W_{0,B,Cx^{-1},\sigma T}^\pm$ is $TP^\varepsilon$-inert instead of $P^\varepsilon$-inert. For $j=1,2$, the support of $W_{0,B_j,C_jx^{-1},\sigma T}^\pm$ gives us the restriction \begin{align*}
        \left|\frac{4\pi x}{|B_j|^3C_j}\left(1+\sigma\sqrt{1-\left(\frac{B_j^2C_jk}{4\pi x}\right)^2}\right)-\frac{b_jdP^\frac{1+v}{2}k}{2\sqrt{Nm_j}}\right|\ll\frac{1}{T}.
    \end{align*}
    This yields \begin{align*}
        \int_\R \left|H'(x)\right|dx\ll TP^\varepsilon\frac{1}{T}\ll P^\varepsilon.
    \end{align*}
    Performing the exact same differentiation as done in the proof of Lemma \ref{HugeIntegralAnalysis2}, with the $\sigma=1$ case being the same as the $B\gg kP^\varepsilon$ case, and the $\sigma=-1$ case being the same as the $B\ll kP^\varepsilon$ case, Lemma \ref{2ndDerB} gives us \begin{align*}
        J_{\sigma T}\ll P^\varepsilon k^3T\left(k^2\left|B_1^2C_1-B_2^2C_2\right|\right)^{-1/2}.
    \end{align*}
\end{proof}

\printbibliography

\end{document}